\newtheorem{theorem}{Theorem}
\newtheorem{lemma}[theorem]{Lemma}
\newtheorem{cor}[theorem]{Corollary}
\newtheorem{prop}[theorem]{Proposition}
\theoremstyle{definition}
\newtheorem{definition}[theorem]{Definition}
\theoremstyle{remark}
\newtheorem*{remark}{Remark}
\newcommand{\pra}{\mathbb{R}}
\newcommand{\mig}{\mathbb{C}}
\newcommand{\fis}{\mathbb{N}}
\newcommand{\displaybump}{\hbox to \@totalleftmargin{\hfil}}
\numberwithin{theorem}{section} 
\numberwithin{equation}{section}
\title{Cantor bouquets in spiders' webs}
\author{Yannis Dourekas}
 \date{}
\begin{document}

\maketitle

\begin{abstract}
For many transcendental entire functions, the escaping set has the structure of a Cantor bouquet, consisting of uncountably many disjoint curves. Rippon and Stallard showed that there are many functions for which the escaping set has a new connected structure known as an infinite spider’s web. We investigate a connection between these two topological structures for a certain class of sums of exponentials.
\end{abstract}

\section{Introduction}
        Let $f : \mig \to \mig$ be a transcendental entire function. We denote by $f^n$ the $n$th iterate of $f$, for $n=0, 1, 2 \ldots$. The \emph{Fatou set} of $f$, $F(f)$, is the set of points $z \in \mig$ such that the sequence $\{f^n\}_{n \in \fis}$ forms a normal family in some neighborhood of $z$. The complement of the Fatou set is the \emph{Julia set} of $f$, $J(f)$. Another set of note is the \emph{escaping set} of $f$, $I(f)$, defined as the set of points that tend to infinity under iteration. Further, we can define the \emph{fast escaping set} of $f$, $A(f)$, roughly defined as the set of points that tend to infinity under iteration ``as fast as possible''. The formal definition of the fast escaping set, which can be found in \cite{rs1}, along with an extensive study of many of its properties, is
\begin{align*}
A(f) = \{z \in \mig: \exists L \in \fis \text{ such that } |f^{n+L}(z)| \geq M^n(R,f) \text{ for } n \in \fis \},
\end{align*}
where
\begin{align*}
M(r,f) = \max_{|z|=r} |f(z)|, \text{ for } r>0,
\end{align*}
$M^n(r,f)$ denotes iteration of $M(r,f)$ with respect to the variable $r$, and $R>0$ is any value large enough so that $M(r,f)>r$ for $r \geq R$.

In the same paper, the notion of a \emph{spider's web} is introduced. This is a connected structure containing a nested sequence of loops. The formal definition is as follows:

\begin{definition}
A set $E$ is an (infinite) spider's web if $E$ is connected and there exists a sequence of bounded simply connected domains $G_n$, with $G_n \subset G_{n+1}$, for $n \in \fis$, $\partial G_n \subset E$, for $n \in \fis$ and $\cup_{n \in \fis} G_n = \mig$.
\end{definition}
It is known that the escaping, fast escaping, and even Julia sets of many transcendental entire functions are spiders' webs  \cite{rs1}. We note that the spiders' webs that arise in complex dynamics are extremely elaborate \cite{john, rs1}. 

On the other hand, the escaping set for certain families of exponential functions has been found to consist of a set of uncountably many, pairwise disjoint curves, referred to as a \emph{Cantor bouquet}. Results on this can be found for the family of functions $\lambda e^z$ for  $0<\lambda < 1/e$ \cite{dev1, dev2} and $\lambda \in \mig$ \cite{dev3}. This structure has been closely associated to symbolic dynamics that arise from the dynamical properties of these functions, as can be seen in the aforementioned papers as well as \cite{lx, sz}. Informally, a Cantor bouquet can be thought of as the Cartesian product of a Cantor set with $[0, +\infty)$ \cite[p. 490]{dev1}. A topological description of a Cantor bouquet can be found in \cite{aarts}, with additional discussion provided in \cite{lx}.

In this paper we prove that it is possible for a Cantor bouquet (or indeed many Cantor bouquets) to lie within an escaping or Julia set spider's web.

To that end, we study the family of transcendental entire functions defined as
\begin{align*}
\mathcal{F} = \left\{ f: f(z) = \sum_{k=0}^{p-1} \exp( \omega_p^k z) \text{ for any } p \geq 3 \right\},
\end{align*}
where $\omega_p= \exp(2 \pi i/p)$ is a $p$th root of unity. A larger class of functions that includes this family was studied in \cite{dave2}. One of the main tools used in this study was the fact that, for each $p \geq 3$, there exist $p$ unbounded regions outside a circle centered at the origin with the property that $f$ behaves like a single exponential in each one of them. Each of these regions is a $2 k \pi/p$ rotation of the others, for $k=1, \ldots, p-1$ (see the lemma in the next section).

It was shown in \cite{dave2} that the Julia, escaping and fast escaping sets of each of these functions is a spider's web. Our goal is to prove that, within each of these spiders' webs there lie Cantor bouquets. We formulate our main result as follows:

\begin{theorem}
Let $f \in \mathcal{F}$. Then $J(f)$ is a spider's web that contains a Cantor bouquet. Additionally, the curves minus the endpoints lie in $A(f)$.
\end{theorem}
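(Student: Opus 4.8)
Since $J(f)$ is already known to be an infinite spider's web by \cite{dave2}, the substance of the theorem is to exhibit, inside $J(f)$, an uncountable family of pairwise disjoint arcs carrying the Cantor bouquet topology whose members, with their finite endpoints deleted, lie in $A(f)$. My plan is to transplant the classical hair construction for the exponential map into the sector where $f$ behaves like a single exponential. By the $p$-fold symmetry $f(\omega_p z)=f(z)$ it is enough to treat the sector $\Omega$ from the lemma of the next section which contains the positive real axis; there the $k=0$ term dominates, so that
\begin{equation*}
f(z)=e^{z}\bigl(1+\varepsilon(z)\bigr),\qquad f'(z)=e^{z}\bigl(1+\tilde\varepsilon(z)\bigr),
\end{equation*}
with $\varepsilon(z),\tilde\varepsilon(z)\to 0$ as $z\to\infty$ in $\Omega$. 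Fixing a large $R$ and, for $j\in\mathbb Z$ with $|j|$ small enough, setting $T_j=\{\,z\in\Omega:\mathrm{Re}\,z\ge R,\ |\mathrm{Im}\,z-2\pi j|\le 1\,\}$, one checks that the $T_j$ are disjoint closed half-strips inside $\Omega$ on which $f$ is injective and uniformly close to $z\mapsto e^{z}$, and that a point with $f^{n}(z)\in\bigcup_j T_j$ for every $n$ has its whole forward orbit in $\Omega$, where the approximation is valid.

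Next I would carry out the hair construction for $f$ in place of $e^{z}$. Call an itinerary $\underline s=s_0s_1s_2\cdots$ \emph{admissible} if $|s_n|$ grows slowly enough (bounded, or bounded by the $n$-fold iterated exponential) that a point whose $n$th iterate lies in $T_{s_n}$ for all large $n$ can keep its forward orbit in $\Omega$; there are uncountably many of these. For admissible $\underline s$ one constructs $g_{\underline s}(t)$, for $t$ beyond a threshold, as the limit of backward iterates $g^{N}_{\underline s}(t)$, where $g^{N}_{\underline s}(t)$ is obtained by pulling an appropriate point determined by $t$ and $s_N$ back through the branches of $f^{-1}$ selected by $s_{N-1},\dots,s_0$; the estimates above show that each such branch maps $\{\mathrm{Re}\,w\ge R\}$ strictly into the relevant $T_{s_i}$ and contracts there, its derivative being comparable to $1/w$, which makes the sequence $g^{N}_{\underline s}(t)$ Cauchy. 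Its limit parametrises an injective arc $h_{\underline s}\subset\Omega$ with $\mathrm{Re}\,g_{\underline s}(t)\to\infty$ as $t\to\infty$; pulling $h_{\underline s}$ further back through branches of $f^{-n}$ — now following the dynamics outside $\Omega$, until the branch can no longer be continued — extends it to a maximal arc $\Gamma_{\underline s}$ with a finite endpoint $p_{\underline s}$. Distinct admissible itineraries give disjoint arcs, since the itinerary of a point is determined by which $T_j$ its iterates visit.

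On $\Gamma_{\underline s}\setminus\{p_{\underline s}\}$ the orbit enters $\Omega$ after finitely many steps and then stays, so for all large $n$
\begin{equation*}
\mathrm{Re}\,f^{n+1}(z) = \mathrm{Re}\bigl(e^{f^{n}(z)}\bigl(1+\varepsilon(f^{n}(z))\bigr)\bigr) \ge \tfrac12\,e^{\mathrm{Re}\,f^{n}(z)},
\end{equation*}
using $\mathrm{Re}\,f^{n}(z)\ge R$ and $\cos(\mathrm{Im}\,f^{n}(z))\ge\cos 1>\tfrac12$. Iterating, $\mathrm{Re}\,f^{n}(z)$ grows at least like an $n$-fold tower of exponentials, while $M(r,f)\le p\,e^{r}$ (because $|f(z)|\le\sum_{k}e^{\mathrm{Re}(\omega_p^{k}z)}\le p\,e^{|z|}$) forces $M^{n}(R,f)$ to grow only like the $n$-fold iterate of $r\mapsto p\,e^{r}$; comparing the two yields $|f^{n+L}(z)|\ge M^{n}(R,f)$ for all $n$ and some $L$, so $z\in A(f)$. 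Hence $\Gamma_{\underline s}\setminus\{p_{\underline s}\}\subset A(f)$, and since $A(f)\subset J(f)$ for the functions in $\mathcal F$ (see \cite{dave2}) and $J(f)$ is closed, $\Gamma_{\underline s}\subset J(f)$ as well. Finally I would check that $(\underline s,t)\mapsto$ (point of $\Gamma_{\underline s}$) is a homeomorphism onto $\bigcup_{\underline s}\Gamma_{\underline s}$ presenting it as a straight brush in the sense of Aarts--Oversteegen \cite{aarts} — continuity and injectivity of the parametrisation, density of the endpoints $\{p_{\underline s}\}$, and two-sided accumulation of hairs on every hair — so that $\bigcup_{\underline s}\Gamma_{\underline s}$ is a Cantor bouquet contained in $J(f)$; the same construction in each of the $p$ sectors gives $p$ of them.

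I expect the main obstacle to be the rigorous hair construction of the second paragraph for the perturbed map: obtaining the branchwise contraction and bounded-distortion estimates from the single-exponential approximation while keeping every pull-back defining $h_{\underline s}$ inside $\Omega$; controlling the inward extensions past the critical points of $f$ (which, unlike $e^{z}$, does have critical points, forcing some hairs to terminate there); and then verifying that the resulting family of arcs genuinely satisfies the axioms of a straight brush. By contrast the fast-escaping estimate and the membership in $J(f)$, once the arcs are available, and the reduction to the single exponential sector, should be routine.
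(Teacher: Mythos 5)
Your overall strategy (reduce to one sector by symmetry, use $f(z)=e^z(1+\varepsilon(z))$ there, build hairs as limits of backward iterates coded by itineraries, then get $A(f)$ by comparing the tower growth of $\operatorname{Re}f^n$ with $M(r,f)\le p\,e^r$, and $J(f)$ from $A(f)\subset J(f)$) is broadly the same as the paper's, and the escaping/fast-escaping part of your argument is essentially sound. The genuine gap is at the finite ends of the curves, which is precisely where the real work lies. You produce the endpoint $p_{\underline s}$ by ``pulling $h_{\underline s}$ further back through branches of $f^{-n}$, until the branch can no longer be continued,'' and assert that this maximal extension terminates at a finite endpoint. Nothing in your argument guarantees this: a maximal backward continuation could a priori accumulate on a nondegenerate continuum rather than land at a point, the branches you need may fail to exist because of the critical values of $f$ (an obstruction you flag but do not resolve), and you give no reason why the endpoint, once it exists, lies in $J(f)$ beyond closedness of $J(f)$ applied to a set you have not shown is in $A(f)$ near its finite end. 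The paper instead constructs the endpoints \emph{first}: it locates all zeros and critical points of $f$ (via the factorisation $f(z)=g(z^p)$ and Laguerre's theorem), uses Rouch\'e's theorem and explicit boundary estimates to produce compact trapeziums near the rays of zeros that cover themselves under $f$, obtains an invariant Cantor set $\Lambda_K$ on which $f$ is conjugate to the shift on bounded itineraries (so the endpoints are limits of repelling periodic points, hence in $J(f)$ by Baker's theorem), and then proves that the hair parametrisation is continuous down to $t=1$, i.e.\ that the hair actually attaches to $z(s)$, using contraction of the Poincar\'e metric together with the Branner--Hubbard modulus criterion. None of this is replaced by anything in your proposal, and your allowance of slowly growing (unbounded) itineraries makes the landing problem strictly harder than what is needed; bounded itineraries suffice and are what make the compactness/contraction argument work.

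Two further points. First, the well-definedness of the inverse branches $L_{s_i}$ on the half-strips is not automatic for $f$ (unlike for $e^z$): the paper needs the location of the zeros and critical points, covering lemmas for all four sides of the trapeziums, and a Rouch\'e argument to know that $f$ is injective onto the relevant region; your sketch assumes this. Second, uniqueness of the hair attached to each endpoint (part of the paper's Theorem 5.6, proved with a Montel/normality argument against Lemma 2.6) and the fact that points staying in the sector are in $J(f)\cap A(f)$ (proved via a logarithmic-derivative lemma and the absence of multiply connected Fatou components) are also part of the paper's proof that your route either omits or replaces with the unproved assertion that the arcs form a straight brush in the sense of \cite{aarts}. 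As it stands, the proposal reproduces the easy half of the argument and leaves the endpoint construction and branch control --- the core of Sections 3--5 of the paper --- unestablished.
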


\begin{remark}
The result holds for functions of the form $\lambda f$, where $f \in \mathcal{F}$ and $\lambda > 0$. The reasoning is similar but for simplicity we have given the proof for the case $\lambda=1$. Additionally, when $p$ is even, $f$ is an even function, and the result holds for negative $\lambda$ as well.
\end{remark}

We will prove our result for just one of the $p$ regions mentioned above; due to symmetry, the curves we find in that region will have $2k \pi/p$ rotations in all the other regions for $k=1, \ldots, p-1$, and these rotations will have similar properties. Throughout the rest of the paper $p$ is a fixed integer with $p \geq 3$.

The structure of the proof is as follows:
\begin{itemize}
\item In Section \ref{sec:pre} we list the preliminary results from \cite{dave2} that we will make use of.
\item In Section \ref{sec:zeros} we find all the zeros and critical points for all $f \in \mathcal{F}$.
\item In Section \ref{sec:trapeziums} we find different subsets of the plane that cover themselves under $f$: the endpoints of the curves in the Cantor bouquet will lie in these regions.
\item In Section \ref{sec:hairs} we identify curves that extend to infinity (which we call \emph{hairs}), prove that they do, in fact,  constitute a Cantor bouquet, and further prove that they are in $J(f)$ and $A(f)$ (apart from the endpoints), thus making them part of the $J(f)$, $A(f)$, and $A(f) \cap J(f)$ spiders' webs.
\end{itemize}
Our argument in Sections \ref{sec:trapeziums} and \ref{sec:hairs} is inspired by \cite{dev3}, where the authors prove the existence of hairs in the dynamical plane for the family of complex exponential functions $z \mapsto \lambda e^z$ for $\lambda \in \mig$. In our case, the functions in $\mathcal{F}$ provide extra challenges (for example, locating the critical points and finding regions that cover themselves under iteration), since they arise as sums of exponentials and, further, are not in the Eremenko--Lyubich class. Several different tools, including Laguerre's theorem, are thus required in order to prove our results.\\

\emph{Acknowledgements.} I would like to thank my supervisors,  Prof Phil Rippon and Prof Gwyneth Stallard, for their boundless patience and unsparing guidance.

\section{Preliminaries}\label{sec:pre}
This section contains some of the preliminary results we will use, many taken from \cite{dave2} with slight modifications in order to make their purpose clearer for the requirements of this paper. We also prove a lemma that follows as a corollary from these results.

We first prove a result that concerns the symmetry properties of $\mathcal{F}$. This will allow us to work in certain angles in order to prove results for the whole plane, and justifies the use of the phrase ``due to symmetry'' that will appear numerous times in what follows.
\begin{lemma} \label{symmetry}
Let $f \in \mathcal{F}$. Then $f( \omega_p^k z) = f(z)$ for $k=1, \ldots, p-1$ and for all $z \in \mig$.
\end{lemma}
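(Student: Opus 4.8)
The plan is to compute $f(\omega_p^k z)$ directly from the defining series and exploit the fact that multiplication by $\omega_p^k$ permutes the $p$th roots of unity. Writing $f(z) = \sum_{j=0}^{p-1} \exp(\omega_p^j z)$, I would substitute $\omega_p^k z$ for $z$ to get
\begin{align*}
f(\omega_p^k z) = \sum_{j=0}^{p-1} \exp(\omega_p^j \omega_p^k z) = \sum_{j=0}^{p-1} \exp(\omega_p^{j+k} z).
\end{align*}
The key observation is that, since $\omega_p^p = 1$, the exponent $\omega_p^{j+k}$ depends only on $j+k \bmod p$, so as $j$ ranges over $\{0, 1, \ldots, p-1\}$ the index $j+k \bmod p$ also ranges over $\{0, 1, \ldots, p-1\}$, i.e.\ $j \mapsto j+k \bmod p$ is a permutation of the residues modulo $p$.

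Consequently the sum $\sum_{j=0}^{p-1} \exp(\omega_p^{j+k} z)$ is a reordering of the sum $\sum_{j=0}^{p-1} \exp(\omega_p^{j} z) = f(z)$, and since the series is a finite sum this reordering does not change its value. Hence $f(\omega_p^k z) = f(z)$ for every $k = 1, \ldots, p-1$ and every $z \in \mig$, which is the claim. For full rigor I would make the permutation step explicit: writing $j + k = q_j p + r_j$ with $0 \le r_j \le p-1$, one has $\omega_p^{j+k} = \omega_p^{r_j}$, and the map $j \mapsto r_j$ is a bijection of $\{0, \ldots, p-1\}$ to itself (its inverse being reduction of $j - k$ modulo $p$).

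There is essentially no obstacle here; the statement is a formal consequence of the cyclic symmetry built into the definition of $\mathcal{F}$. The only thing to be slightly careful about is to note that the manipulation is legitimate because each $f \in \mathcal{F}$ is a finite sum of entire functions, so term rearrangement is trivially valid and no convergence subtleties arise. This lemma then licenses the repeated appeals to symmetry later in the paper: any set or curve constructed in one of the $p$ sectors can be rotated by $2k\pi/p$ to yield a corresponding object in each of the other sectors with identical dynamical properties, since $f$ commutes with the rotation $z \mapsto \omega_p^k z$ in the appropriate sense.
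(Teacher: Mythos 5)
Your proof is correct and follows essentially the same route as the paper: both rest on the observation that multiplication by $\omega_p^k$ permutes the $p$th roots of unity, so the finite sum defining $f$ is merely reordered. The only cosmetic difference is that the paper verifies the case $k=1$ (from which the general case follows by iteration), whereas you treat general $k$ directly via the residue permutation $j \mapsto j+k \bmod p$.
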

\begin{proof}
It suffices to prove that $f( \omega_p z) = f(z)$. We have
\begin{align*}
 f(z) = \sum_{k=0}^{p-1} \exp( \omega_p^k z),
\end{align*}
so
\begin{align*}
 f( \omega_p z) &= \sum_{k=0}^{p-1} \exp( \omega_p^k  \omega_p z)
=  \sum_{k=0}^{p-1} \exp( \omega_p^{k+1}   z)
\\&=  \sum_{k=1}^{p-1} \exp( \omega_p^k   z) +  \exp( \omega_p^{p}   z)
=  \sum_{k=1}^{p-1} \exp( \omega_p^k   z) +  \exp( z)
\\&=   \sum_{k=0}^{p-1} \exp( \omega_p^k z)
=   f(z).\tag*{\qedhere} 
\end{align*}
\end{proof}

The following result is one we have already mentioned in the previous section \cite[Theorem 1.2]{dave2}:

\begin{theorem}
Suppose that $f \in \mathcal{F}$. Then each of 
\begin{align*}
A(f), I(f), J(f) \cap A(f), J(f)\cap I(f), \text{ and } J(f)
\end{align*}
is a spider's web.
\end{theorem}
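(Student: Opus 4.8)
The whole theorem reduces to a single assertion: that the level set $A_R(f)$ of the fast escaping set is an (infinite) spider's web for some $R > 0$. Granting this, the general theory of Rippon and Stallard \cite{rs1} finishes the proof, for once $A_R(f)$ is a spider's web its defining loops $\partial G_n$ may be taken inside $J(f) \cap A_R(f)$, so that each of $A(f)$, $I(f)$, $J(f) \cap A(f)$, $J(f) \cap I(f)$ and $J(f)$ contains a nested sequence of loops exhausting the plane and is connected, hence is a spider's web.

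To produce an $A_R(f)$ spider's web I would apply a loop criterion from \cite{rs1}: it is enough to construct Jordan curves $(\gamma_n)_{n \ge 0}$, each surrounding $0$, with $r_n := \operatorname{dist}(0, \gamma_n) \to \infty$ and $\gamma_n$ lying inside $\gamma_{n+1}$, such that $f(\gamma_n)$ surrounds $\gamma_{n+1}$ while the moduli remain large enough for fast escape --- concretely $\min_{z \in \gamma_n} |f(z)| \ge M(r_n, f)^{1 - \epsilon_n}$ with $\epsilon_n \to 0$ and $(r_n)$ growing at least as fast as $M^n(R_0, f)$ for some $R_0 > 0$. These curves are built using the preliminary lemma: outside a circle $|z| = r_0$ there are $p$ unbounded ``mountain'' sectors $W_0, \dots, W_{p-1}$, the $2\pi k/p$-rotates of one another, in each of which a single exponential summand of $f$ dominates, so that $|f(z)|$ there is comparable to $e^{L(z)}$, where $L(z) := \max_{0 \le k < p} \operatorname{Re}(\omega_p^k z)$. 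The natural model for $\gamma_n$ is (a perturbation of) $\partial P_{a_n}$, where $P_a := \{\, z : L(z) < a \,\}$ is a bounded convex polygon containing $0$ and $a_n := (1 - \epsilon_n) r_n$: along $\partial P_{a_n}$ one has $L(z) = a_n$, so on the edges (away from the corners), which lie inside the $W_k$, the quantity $|f(z)|$ is comparable to $e^{a_n}$; and since $\gamma_n$ encloses zeros of $f$ for $n$ large, $f(\gamma_n)$ has non-zero winding number about $0$, so it surrounds $\overline{D(0, \tfrac{1}{2} e^{(1 - \epsilon_n) a_n})}$ and hence $\gamma_{n+1}$, once $r_{n+1}$ is taken below $\tfrac{1}{2} e^{(1 - \epsilon_n) a_n}$.

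The main obstacle is the behaviour near the $p$ corners of $P_{a_n}$, which point in the ``valley'' directions bisecting consecutive mountains. At such a corner two summands $e^{\omega_p^k z}$ have exponents of equal real part $a_n$, and their sum can be small precisely when the imaginary parts nearly cancel --- the mechanism that creates the zeros of $f$. So near each corner $\partial P_{a_n}$ must be detoured onto a nearby, zero-free arc while maintaining $|f(z)| \ge e^{(1 - \epsilon_n) a_n}$ and leaving the winding count intact, and this forces one to control the location and spacing of the zeros of $f$ rather precisely --- exactly the information assembled in Section \ref{sec:zeros}. Once the valley corners are handled, the regularity of $M(r, f)$ and of $L$ near the $W_k$ makes the choice of the sequences $(r_n)$ and $(\epsilon_n)$, and the verification of the fast-escape inequality, routine.

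I would also note that, since $f(\omega_p z) = f(z)$ by Lemma \ref{symmetry}, one has $f(z) = g(z^p)$ for the entire function $g(w) = p \sum_{j \ge 0} w^j/(pj)!$, which has order $1/p \le 1/3 < 1/2$; one could thus instead try to deduce the spider's web property for $g$ from small-order growth criteria and transfer it to $f$ along the proper branched cover $z \mapsto z^p$ (which maps loops around $0$, and simply connected domains containing $0$, to the same, and semiconjugates $f$ to $w \mapsto g(w)^p$). But $g$ still has an oscillatory ``valley'' with zeros along the negative reals, so the essential work --- routing the loops around those zeros --- is the same, and the direct argument sketched above is the one I would carry out.
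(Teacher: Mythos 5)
First, a point of comparison: the paper does not prove this statement at all --- it is quoted verbatim from \cite[Theorem 1.2]{dave2}, so there is no in-paper argument to measure your proposal against; the relevant comparison is with Sixsmith's proof in that reference. Judged against it, your outline follows the right route: reduce everything to showing that $A_R(f)$ is a spider's web (the general results of \cite{rs1} then give all five sets), and verify the loop criterion of \cite{rs1} by building nested Jordan curves on which $|f|$ is large, following level curves of $\max_k \operatorname{Re}(\omega_p^k z)$ through the sectors where one exponential dominates (Lemma \ref{davel}) and detouring across the ``valleys''. You also correctly identify the valleys, where two summands are comparable and the zeros sit, as the crux. Indeed, the known construction crosses the strips $Q_k$ essentially along the lines $C_m$ of \eqref{eq:cot}, on which the two dominant terms have \emph{equal} argument \eqref{eq:newarg} and so cannot cancel --- the very device this paper borrows in Section \ref{sec:zeros}.

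As a proof, however, your proposal stops exactly where the work begins, and its quantitative bookkeeping does not close as written. (a) The corner detours are only postulated: you say they ``force one to control the location and spacing of the zeros'' but you do not exhibit a zero-free crossing with a lower bound on $|f|$; the constructive-interference crossings (along $C_m$, or at heights on $V_k$ midway between zeros, where the two dominant terms are real of the same sign) are what make this work and are absent. (b) Your parameters conflict: you need $r_{n+1} \geq M^{n+1}(R_0)$, and $M(r)=e^{r}(1+o(1))$ for these $f$, so $r_{n+1}$ must be of size roughly $e^{r_n}$; but you also require $r_{n+1} \leq \tfrac12 e^{(1-\epsilon_n)a_n}$ with $a_n=(1-\epsilon_n)r_n<r_n$, which is impossible. (Also $r_n$ is defined as $\operatorname{dist}(0,\gamma_n)$, yet $\gamma_n\approx\partial P_{a_n}$ has distance $a_n$ from $0$.) The repair is to keep the loops at level $a_n=r_n$, use that the crossings give $\min_{\gamma_n}|f|\geq c\,e^{r_n}$ for a constant $c=c(p,\tau)$, and take $r_n$ somewhat larger than $M^n(R_0)$ rather than equal to it, exploiting the exponential gain per step to absorb the constants; this is routine once the crossings are in place, but it must be done, not asserted. (c) The aside about transferring the result from $g(w)=p\sum_{j\ge0}w^j/(pj)!$ is inaccurate: $z\mapsto z^p$ semiconjugates $f$ to $w\mapsto g(w)^p$, not to $g$, so a spider's web for $A_R(g)$ does not obviously pull back to one for $f$; you rightly do not rely on this route.
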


We will now fix several constants that serve as preparation for the next lemma, which concerns the fact that there are certain unbounded regions in which $f$  behaves like a single exponential.

Choose a constant $\sigma$ such that
\begin{align*}
0 < \sigma < \frac{1}{8 \sqrt{2}}.
\end{align*}
Fix a constant $\eta > 4 / \sigma$. Fix also a constant $\tau$ sufficiently large that
\begin{align*}
 \tau \geq \frac{\log (4 p \eta) }{2 \sin (\pi/p)} >0.
\end{align*}
Suppose that $\nu > 0$ is large compared to $\tau$; we will specify its size more precisely below.
 Let $P(\nu)$ be the interior of the regular $p$-gon centered at the origin and with vertices at the points
\begin{align*}
\frac{\nu}{\cos(\pi/p)} \exp \left( \frac{(2k+1)i \pi}{p} \right), \text{ for } k \in \{0,1,\cdots,p-1\}.
\end{align*}
Define the domains
\begin{align*}
Q_k = \left\{ z \exp \left( \frac{(2k+1)i \pi}{p} \right) : \operatorname{Re} (z) > 0, |\operatorname{Im}(z)| <  \tau \right\}, \text{ for } k \in \{0,1,\cdots,p-1\}.
\end{align*}
Roughly speaking, each $Q_k$ can be obtained by rotating a half-infinite horizontal strip of width $2 \tau$ around the origin until a vertex of $P(\nu)$ is positioned centrally in the strip.

Set
\begin{align*}
T( \nu) = \mig \setminus \left( P(\nu) \cup \bigcup_{k=0}^{p-1} Q_k \right).
\end{align*}

The set $T(\nu)$ consists of $p$ simply connected unbounded components.  These components are arranged rotationally symmetrically. We label these $T_j(\nu)$, for $j \in  \{0,1,\cdots,p-1\}$, where $T_0(\nu)$ is the component that has an unbounded intersection with the positive real axis. Then, each component of $T_{j+1} (\nu)$ is obtained by rotating $T_j(\nu)$ clockwise around the origin by $2 \pi /p$ radians; see Figure 1. We take $\nu>0$ so large that
\begin{equation}
|e^z| \geq 4 p \eta | \exp(\omega^k_p z)| \label{eq:egine}
\end{equation}
for $k = 1, 2, \ldots p-1$ and $z \in T_0(\nu)$.
The following lemma shows that for such $\nu$, $f$ behaves like a single exponential in each component of $T(\nu)$. Lemma \ref{davel} is a special case of \cite[Lemma 4.1]{dave2}.

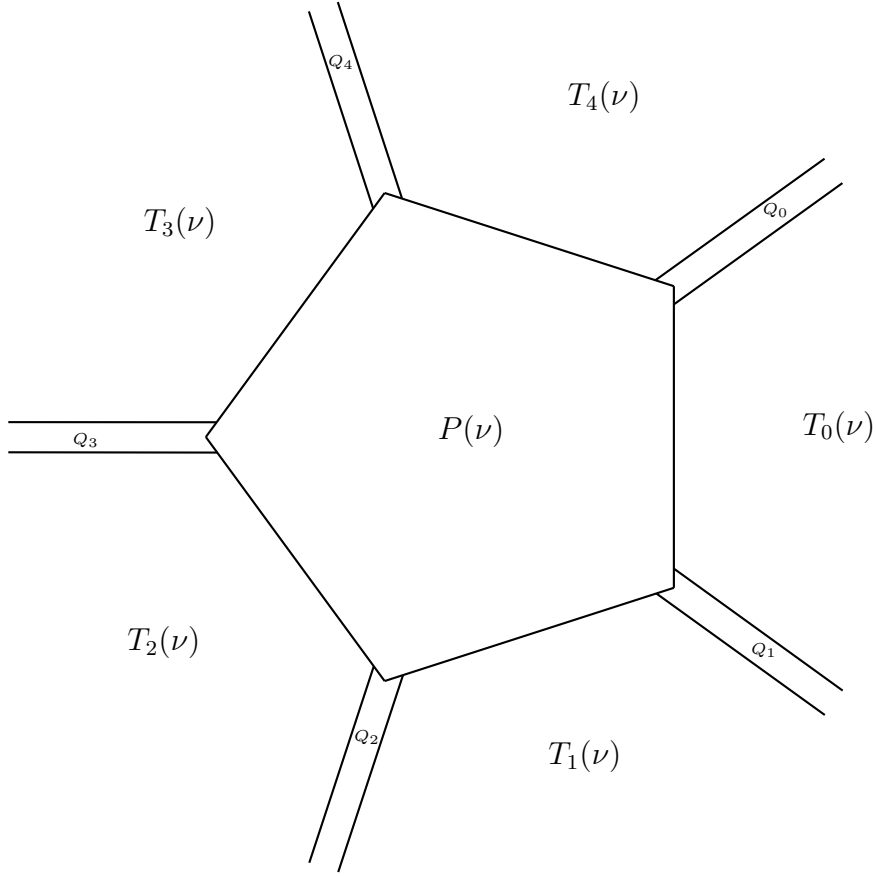
\begin{figure}[h]
\centering
\begin{subfigure}[b]{.8\textwidth}
\begin{tikzpicture}
\draw [line width=0.8pt] (2.75316535371619,-1.9972747496061851)-- (2.75316535371619,2.0027252503938153);
\draw [line width=0.8pt] (2.75316535371619,2.0027252503938153)-- (-1.0510607114644228,3.2387932278936047);
\draw [line width=0.8pt] (-1.0510607114644228,3.2387932278936047)-- (-3.402201720634316,0.002725250393815548);
\draw [line width=0.8pt] (-3.402201720634316,0.002725250393815548)-- (-1.0510607114644244,-3.233342727105974);
\draw [line width=0.8pt] (-1.0510607114644244,-3.233342727105974)-- (2.75316535371619,-1.9972747496061851);
\draw [line width=0.8pt] (-6,0.2)-- (-3.2600367285857894,0.1983985751733357);
\draw [line width=0.8pt] (-3.252230792018594,-0.2036920243491368)-- (-6,-0.2);
\draw [line width=0.8pt] (-2.0417214023631414,-5.642652597187732)-- (-1.1935031420162177,-3.0372875409865143);
\draw [line width=0.8pt] (-0.808680090143069,-3.1541164826793517)-- (-1.6612987958450798,-5.766259394937711);
\draw [line width=0.8pt] (4.738146777781479,-3.683584888353489)-- (2.5224112211399743,-2.071779305693192);
\draw [line width=0.8pt] (2.752439010284849,-1.7418929630065727)-- (4.973260878698468,-3.3599780906035104);
\draw [line width=0.8pt] (4.970057154717891,3.3698381391561885)-- (2.7524390102848493,1.7606237162958618);
\draw [line width=0.8pt] (2.509780950460227,2.0813336301934253)-- (4.734943053800901,3.6934449369061664);
\draw [line width=0.8pt] (-1.6664825301362274,5.770025598354111)-- (-0.8213103608228155,3.163670807179586);
\draw [line width=0.8pt] (-1.2013090785834117,3.031994091810713)-- (-2.0469051366542876,5.646418800604131);
\draw (-0.49590073949914104,0.43137351827695) node[anchor=north west] {$P(\nu)$};
\draw (3.7789459447540286,3.2517304991653325) node[anchor=north west] {\tiny{$Q_0$}};
\draw (-1.9394925021909722,5.217118143203081) node[anchor=north west] {\tiny{$Q_4$}};
\draw (-5.295387698637927,0.19820006277223147) node[anchor=north west] {\tiny{$Q_3$}};
\draw (-1.59646124105624335,-3.740628772436522) node[anchor=north west] {\tiny{$Q_2$}};
\draw (3.6234969744175496,-2.5832946755320314) node[anchor=north west] {\tiny{$Q_1$}};
\draw (4.303586219639644,0.4953556692325308) node[anchor=north west] {$T_0(\nu)$};
\draw (0.9614333574053485,-3.876646621480941) node[anchor=north west] {$T_1(\nu)$};
\draw (1.2140379342021268,4.867357959946003) node[anchor=north west] {$T_4(\nu)$};
\draw (-4.362693876619053,3.1962815288288535) node[anchor=north west] {$T_3(\nu)$};
\draw (-4.576436210831712,-2.361019160700271) node[anchor=north west] {$T_2(\nu)$};
\end{tikzpicture}
\end{subfigure}\caption{The sets $P(\nu)$, $T_k(\nu)$ and $Q_k$ for $p=5$ and $k \in \{0, 1, 2, 3, 4\}$.}
\end{figure}

\begin{lemma} \label{davel}
Let $f \in \mathcal{F}$. Suppose that $\eta, \tau, T_j(\nu)$ and $T(\nu)$ are as defined above, for $j \in \{0,1, \cdots, p-1\}$. Then there exists $\nu'>0$ such that the following holds. Suppose that $\nu \geq \nu'$; there exists a constant $\varepsilon_0 \in (0,1)$, independent of $\nu$, such that, for all $z \in T(\nu)$,
\begin{equation}
|f'(z)|>2  \label{eq:daveleq1}
\end{equation}
\begin{equation}
\left| z \frac{f'(z)}{f(z)} \right| >2,\label{eq:daveleq2}
\end{equation}
and finally
\begin{equation}
|f(z)| > \max\{ e^{\varepsilon_0 \nu}, M ( \varepsilon_0 |z|, f)\}.\label{eq:daveleq3}
\end{equation}
\end{lemma}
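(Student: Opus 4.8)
The plan is to deduce Lemma~\ref{davel} as a special case of \cite[Lemma 4.1]{dave2} by verifying that the constants $\sigma, \eta, \tau, \nu$ fixed above meet the hypotheses of that more general statement, and then translating its conclusions into the present notation. The heart of the matter is the single-exponential approximation: on each component $T_j(\nu)$ one exponential term dominates, so I would write, for $z \in T_0(\nu)$,
\begin{align*}
f(z) = e^z \left( 1 + \sum_{k=1}^{p-1} \frac{\exp(\omega_p^k z)}{e^z} \right) = e^z(1 + \delta(z)),
\end{align*}
where, by the defining inequality~\eqref{eq:egine}, $|\delta(z)| \leq (p-1)/(4p\eta) < 1/(4\eta)$, so $|f(z)|$ is comparable to $|e^z|$ up to a factor close to $1$. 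A similar expansion of $f'(z) = e^z(1 + \sum_{k=1}^{p-1}\omega_p^k \exp(\omega_p^k z)/e^z)$ shows $f'$ is likewise comparable to $e^z$; the factor $\omega_p^k$ has modulus $1$ so the same bound~\eqref{eq:egine} controls the error term. For the other components one uses Lemma~\ref{symmetry} and the rotational symmetry of the $T_j(\nu)$ to reduce to $T_0(\nu)$.

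With this approximation in hand the three displayed estimates follow. For~\eqref{eq:daveleq1}, note that on $T_0(\nu)$ we have $\operatorname{Re}(z)$ bounded below by a quantity growing with $\nu$ (the component lies outside $P(\nu)$ and outside the strips $Q_k$, so points of $T_0(\nu)$ have large real part once $\nu$ is large), hence $|e^z| = e^{\operatorname{Re} z}$ is large, and $|f'(z)| \geq |e^z|(1 - 1/(4\eta))$ exceeds $2$ for $\nu \geq \nu'$ with $\nu'$ chosen appropriately; this also pins down how large $\nu$ must be and gives the threshold $\nu'$. For~\eqref{eq:daveleq2}, combine the lower bound on $|f'(z)|$ with an upper bound $|f(z)| \leq |e^z|(1 + 1/(4\eta))$ and the fact that $|z| \geq \nu / \cos(\pi/p)$ on the relevant part of $T_0(\nu)$, giving $|z f'(z)/f(z)| \geq |z| \cdot \tfrac{1 - 1/(4\eta)}{1 + 1/(4\eta)} > 2$. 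For~\eqref{eq:daveleq3}, the bound $|f(z)| \geq |e^z|(1 - 1/(4\eta)) \geq e^{\varepsilon_0 \operatorname{Re} z}$ for a suitable $\varepsilon_0 \in (0,1)$ handles the $e^{\varepsilon_0 \nu}$ term (using again that $\operatorname{Re} z$ is at least a fixed fraction of $\nu$ on $T_0(\nu)$), while comparing with $M(\varepsilon_0 |z|, f)$ requires the crude global bound $M(r,f) \leq p \, e^r$ together with $\operatorname{Re} z \geq \varepsilon_0 |z| + O(\log \varepsilon_0)$-type reasoning: since $z \in T_0(\nu)$ stays within a bounded-slope cone of the positive real axis, $\operatorname{Re} z \geq c|z|$ for a constant $c > \varepsilon_0$, so $|e^z| \geq e^{c|z|} \gg p\, e^{\varepsilon_0 |z|} \geq M(\varepsilon_0|z|,f)$.

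The main obstacle is bookkeeping rather than conceptual: one must check carefully that the geometry of $T_0(\nu)$ — which is $\mig$ minus the polygon $P(\nu)$ and minus the $p$ rotated strips $Q_k$ — genuinely forces both $\operatorname{Re}(z) \gtrsim \nu$ and $\operatorname{Re}(z) \geq c|z|$ with $c$ strictly bigger than the $\varepsilon_0$ produced, and that the constant $\varepsilon_0$ can be chosen \emph{independently of} $\nu$ (it depends only on $\sigma, \eta, \tau, p$). This is exactly the content of the cited \cite[Lemma 4.1]{dave2}, so the cleanest route is to state precisely which general hypotheses our fixed constants satisfy, invoke that lemma to obtain $\nu'$ and $\varepsilon_0$, and record that the three inequalities above are its specialization to $\mathcal{F}$. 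If instead a self-contained argument is wanted, the delicate point is extracting the uniform-in-$\nu$ constant $\varepsilon_0$ from the cone condition on $T_0(\nu)$.
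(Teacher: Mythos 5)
Your primary route---invoking \cite[Lemma 4.1]{dave2} after checking that the fixed constants $\sigma,\eta,\tau$ and the sets $T_j(\nu)$ satisfy its hypotheses---is exactly what the paper does: Lemma \ref{davel} is presented there as a special case of that result, with no independent proof supplied. Your supplementary direct sketch (dominance of $e^z$ on $T_0(\nu)$ via \eqref{eq:egine}, combined with the geometric facts that $\operatorname{Re} z>\nu$ and $\operatorname{Re} z\geq \cos(\pi/p)\,|z|$ on $T_0(\nu)$, the crude bound $M(r,f)\leq p\,e^{r}$, and reduction to $T_0(\nu)$ by Lemma \ref{symmetry}) is also essentially sound and would give the three inequalities with $\varepsilon_0<\cos(\pi/p)$ independent of $\nu$.
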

For the rest of this paper we will be working in $T_0(\nu)$. Similar arguments work for the other components of $T(\nu)$ due to symmetry.

We fix the notation found in \cite{dev3} that we also use here.
For each integer~$k$, we define horizontal strips $R(k)$ by
\begin{align*}
R(k) = \{ z \in \mig: (2k-1) \pi < \operatorname{Im} z < (2k+1)\pi \}.
\end{align*}
 Note that $z \mapsto e^z$ maps the boundary of $R(k)$ onto the negative real axis and $z \mapsto e^z$ maps $R(k)$ onto $\mig \setminus \{ x \in \pra: x \leq 0\}$ for each integer $k$. 

\begin{definition}
For $z \in \mig$, the \emph{itinerary} of $z$ under $f$ is the sequence of integers $s(z) = s_0 s_1 s_2 \ldots$ where $s_n = k$ if and only if $f^n(z) \in R(k)$. We do not define the itinerary of $z$ if $f^n(z) \in \cup_{k \in \fis} \partial R(k)$ for some $n$.
\end{definition}

Let $N \in \fis$. Let $\Sigma_N$ consist of all one-sided sequences $s_0 s_1 s_2 \ldots$, where each $s_j \in \mathbb{Z}$ and $|s_j| \leq N$. The one-sided shift $\sigma$ on $\Sigma_N$ is defined by
\begin{align*}
\sigma (s_0 s_1 s_2 \ldots) = s_1 s_2 s_3 \ldots.
\end{align*}
It is known that $\sigma$ has dense periodic points in $\Sigma_N$, has dense orbits, and exhibits sensitive dependence on initial conditions (see for example \cite[Chapter 3]{dyn}).

We finish this section by proving the following lemma for points that stay in $T_j(\nu)$, $j \in \{0, 1, \ldots, p-1\}$, under iteration. 

\begin{lemma}\label{g} Let $z \in \mig$ be such that $f^n(z) \in T_j(\nu)$ for all $n \geq 1$ and $j \in \{0, 1, \ldots, p-1\}$. If $\nu$ is large enough, then $z \in J(f) \cap A(f)$.
\end{lemma}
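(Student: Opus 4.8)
The plan is to exploit the estimates in Lemma \ref{davel}, which hold throughout $T(\nu)$, together with the characterisation of the fast escaping set and the blow-up property of the Julia set. Since $f^n(z) \in T_j(\nu) \subset T(\nu)$ for all $n \geq 1$, estimate \eqref{eq:daveleq3} applies at every point of the forward orbit from the first iterate onwards.

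First I would show $z \in A(f)$. By \eqref{eq:daveleq3}, for every $n \geq 1$ we have $|f^{n+1}(z)| = |f(f^n(z))| > M(\varepsilon_0 |f^n(z)|, f)$. I want to convert this into the defining inequality $|f^{n+L}(z)| \geq M^n(R,f)$ for a suitable $L$. Choosing $\nu$ large enough that $\varepsilon_0 \nu \geq R$ and that $M(r,f) \geq r$ on the relevant range, one gets $|f(w)| > M(\varepsilon_0|w|,f) \geq \varepsilon_0 |w| \cdot (\text{something})$; more usefully, from $|f(w)| > e^{\varepsilon_0 \nu}$ and the fact that $w \in T(\nu)$ forces $|w|$ to be comparable to at least $\nu$ (points of $T(\nu)$ lie outside $P(\nu)$ and outside the strips $Q_k$), one iterates: if $|f^n(z)| \geq M^{n-1}(R',f)$ for an appropriate starting radius $R'$, then $|f^{n+1}(z)| > M(\varepsilon_0 |f^n(z)|, f) \geq M(M^{n-1}(R',f), f) = M^n(R',f)$, provided $\varepsilon_0 |f^n(z)| \geq M^{n-1}(R',f)$, i.e.\ provided the orbit radii grow at least as fast as the $M^n$-sequence with room to spare for the $\varepsilon_0$ factor. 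One sets up the base case by taking $\nu$ so large that $\varepsilon_0 |f(z)| \geq R$ (using $|f(z)| > e^{\varepsilon_0 \nu}$ and monotonicity of $M$), and then the induction, absorbing the $\varepsilon_0$ loss using that $M$ grows faster than any linear map for large argument. This yields $|f^{n+L}(z)| \geq M^n(R,f)$ with $L = 1$ (after enlarging $\nu$), so $z \in A(f)$.

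Next, $z \in J(f)$. The standard fact is that $A(f) \subset J(f)$ for any transcendental entire function (this is in \cite{rs1}, and also follows since $A(f)$ has no interior by the blow-up property — alternatively $A(f) \cap J(f)$ being nonempty plus $J(f) = \partial A(f)$). So once $z \in A(f)$ we immediately get $z \in J(f)$, hence $z \in J(f) \cap A(f)$. Alternatively, and more self-containedly, one can argue directly that $\{f^n\}$ cannot be normal near $z$: the derivative estimate \eqref{eq:daveleq1}, $|f'(w)| > 2$ on $T(\nu)$, combined with \eqref{eq:daveleq2}, gives expansion along the orbit, so by a Montel-type argument no neighbourhood of $z$ can have a normal family of iterates; but invoking $A(f) \subseteq J(f)$ is cleaner.

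The main obstacle I expect is the bookkeeping in the first part: making the passage from the per-step inequality $|f(w)| > M(\varepsilon_0|w|,f)$ to the fast-escaping condition $|f^{n+L}(z)| \geq M^n(R,f)$ genuinely airtight, i.e.\ choosing $\nu$ (and hence $\varepsilon_0\nu$, which controls the size of the orbit) large enough that the loss coming from the factor $\varepsilon_0 \in (0,1)$ inside $M$ is swallowed at every iterate. This requires the elementary but slightly delicate fact that for a transcendental entire function $M(r,f)/r \to \infty$, so that $M(\varepsilon_0 r, f) \geq M^{\,\cdot}(\ldots)$ comparisons go through once $r$ is large; one also needs to know that staying in $T(\nu)$ keeps $|f^n(z)|$ large (bounded below in terms of $\nu$), which follows from the geometry of $T(\nu)$ together with \eqref{eq:daveleq3} itself. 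Everything else is a direct citation of Lemma \ref{davel} and the general theory of $A(f)$.
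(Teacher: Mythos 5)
Your treatment of the fast escaping part is essentially the paper's argument: iterate \eqref{eq:daveleq3} and absorb the loss from the factor $\varepsilon_0$ using the growth of $M$. The paper makes this airtight by quoting the standard fact that $M(kr)/M(r)\to\infty$ (with $k=1/\varepsilon_0$), which gives $M(r)\geq \varepsilon_0^{-2}M(\varepsilon_0 r)$ for large $r$, and then running the induction with the strengthened hypothesis $|f^n(z)|\geq \varepsilon_0^{-2}M^n(\varepsilon_0^2|z|)$; note that with only the hypothesis $|f^n(z)|\geq M^{n-1}(R')$ your inductive step does not close, precisely because you need $\varepsilon_0|f^n(z)|\geq M^{n-1}(R')$, so the spare factor must be carried along at every step. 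This is exactly the bookkeeping issue you flagged, and it is fixable in the way just described (the paper also disposes of points whose own modulus is not yet large by observing they are preimages of fast escaping points, whereas you enlarge $\nu$; either is fine).

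The genuine gap is in the Julia set step. The ``standard fact'' you invoke, that $A(f)\subset J(f)$ for every transcendental entire function, is false: multiply connected Fatou components always lie in $A(f)$ (this is one of the main points of \cite{rs1}), and even for functions with no multiply connected components there exist simply connected fast escaping wandering domains (examples due to Bergweiler and to Sixsmith), so $A(f)\subset J(f)$ cannot be cited as general theory, nor does it follow from $J(f)=\partial A(f)$ together with $A(f)\cap J(f)\neq\emptyset$. Your fallback sketch is also not a proof: $|f'|>2$ along the orbit does not by itself contradict normality, since the iterates could converge locally uniformly to infinity on a neighbourhood (as happens in escaping Fatou components). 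The paper closes this hole with a specific criterion: since $z\in I(f)$ and \eqref{eq:daveleq2} gives $|f^n(z)\,f'(f^n(z))/f(f^n(z))|>2$ for all $n$, Lemma \ref{davel2} shows that $z$ is either in $J(f)$ or in a multiply connected Fatou component, and Corollary \ref{coro} rules out the latter for $f\in\mathcal{F}$. Without an argument of this kind (or some other way of excluding simply connected fast escaping Fatou components), your proof of $z\in J(f)$ does not go through.
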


To prove this we need the following: First, a lemma from \cite{dave2}.

\begin{lemma}\label{davel2} Suppose that $f$ is a transcendental entire function and that $z_0 \in I(f)$. Set $z_n=f^n(z_0)$, for $n \in \fis$. Suppose that there exist $\lambda>1$ and $N \geq 0$ such that
\begin{align*}
f(z_n) \neq 0 \quad \text{and} \quad \left| z_n \frac{f'(z_n)}{f(z_n)} \right| \geq \lambda, \quad \text{for } n \geq N.
\end{align*}
Then either $z_0$ is in a multiply connected Fatou component of $f$ or $z_0 \in J(f)$.
\end{lemma}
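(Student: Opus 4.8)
The plan is to reproduce the standard proof of this lemma (it is stated as a result of \cite{dave2}, and ultimately rests on hyperbolic--contraction estimates for transcendental entire functions). Suppose $z_0 \notin J(f)$; then $z_0$ lies in some Fatou component $U_0$, and we write $U_n$ for the Fatou component containing $z_n$, so that $f(U_n) \subseteq U_{n+1}$. Since $z_0 \in I(f)$ and $\{f^m\}$ is normal on $U_0$, the iterates $f^m$ tend to $\infty$ locally uniformly on $U_0$; in particular $|z_n| \to \infty$ and $U_n \subseteq I(f) \subseteq \mig \setminus J(f)$ for every $n$. Assume, for a contradiction, that $U_0$ is simply connected. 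Then every $U_n$ is simply connected, since forward images of simply connected Fatou components of a transcendental entire function are simply connected; this is what will allow us to use the \emph{lower} Koebe bound for the hyperbolic metric of $U_{N+k}$ below.

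Put $\rho_n = \operatorname{dist}(z_n, \partial U_n) > 0$. For a simply connected domain $V \subsetneq \mig$, the Koebe distortion theorem gives $\tfrac{1}{2\operatorname{dist}(w,\partial V)} \le \lambda_V(w) \le \tfrac{2}{\operatorname{dist}(w,\partial V)}$ for the density $\lambda_V$ of the hyperbolic metric of $V$. Applying the Schwarz--Pick inequality to $f^k \colon U_N \to U_{N+k}$ at the point $z_N$ gives $\lambda_{U_{N+k}}(z_{N+k})\,\bigl|(f^k)'(z_N)\bigr| \le \lambda_{U_N}(z_N)$. The hypothesis (which forces both $z_j$ and $f(z_j)$ to be non-zero for $j \ge N$) yields $|f'(z_j)| \ge \lambda |f(z_j)|/|z_j| = \lambda |z_{j+1}|/|z_j|$ for $j \ge N$, so by the chain rule $\bigl|(f^k)'(z_N)\bigr| = \prod_{j=N}^{N+k-1} |f'(z_j)| \ge \lambda^k |z_{N+k}|/|z_N|$. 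Substituting the two Koebe estimates into the Schwarz--Pick inequality gives
\[
\frac{\rho_{N+k}}{|z_{N+k}|} \;\ge\; \frac{\lambda^k}{4}\cdot\frac{\rho_N}{|z_N|}.
\]

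Because $\lambda > 1$, the right-hand side tends to $\infty$ with $k$, and since $|z_{N+k}| \to \infty$ this forces $\rho_{N+k} - |z_{N+k}| \to \infty$. Hence $D\bigl(0,\, \rho_{N+k} - |z_{N+k}|\bigr) \subseteq D(z_{N+k},\rho_{N+k}) \subseteq U_{N+k}$, so for all large $k$ the component $U_{N+k}$, and therefore the Fatou set, contains a disk centred at the origin of arbitrarily large radius. This is impossible: $J(f) \neq \emptyset$ since $f$ is transcendental entire, so any fixed point of $J(f)$ eventually lies inside such a disk, contradicting $U_{N+k} \subseteq \mig\setminus J(f)$. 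Therefore $U_0$ is not simply connected, i.e.\ $z_0$ lies in a multiply connected Fatou component; together with the case $z_0 \in J(f)$ this is exactly the assertion of the lemma.

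The step I would be most careful about is the hyperbolic estimate: one must apply Schwarz--Pick to the whole composition $f^k$ at once rather than to the individual iterates. A step-by-step argument only contracts $\rho_n/|z_n|$ by a factor $\lambda/4$ per iterate, which is useless when $\lambda \le 4$ (and $\lambda = 2$ in our application), whereas applying it to $f^k$ incurs the Koebe loss of $\tfrac14$ only once while the expansion $\lambda^k$ coming from the logarithmic-derivative hypothesis accumulates in full. The other point needing attention is the invocation of the preservation of simple connectivity under forward iteration, which is what makes the lower Koebe bound for $\lambda_{U_{N+k}}$ available; no comparable bound holds for a general multiply connected domain (for instance, near the puncture of $\mig \setminus \{0\}$).
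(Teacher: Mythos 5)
This lemma is not proved in the paper at all: it is quoted from \cite{dave2}, where it is in turn extracted from Rippon and Stallard's work on points that ``zip'' to infinity \cite{rs3}. Your reconstruction follows that standard route, and the central computation is correct: applying Schwarz--Pick to the whole iterate $f^k\colon U_N\to U_{N+k}$ and combining it with the two Koebe bounds does yield $\rho_{N+k}/|z_{N+k}|\ge (\lambda^k/4)\,\rho_N/|z_N|\to\infty$, and the resulting arbitrarily large disks about the origin inside the Fatou set contradict $J(f)\neq\emptyset$. (One small slip: the inclusion $I(f)\subseteq\mig\setminus J(f)$ is false --- in fact $J(f)\cap I(f)\neq\emptyset$ for every transcendental entire function --- but what you actually use at the end is $U_{N+k}\subseteq F(f)$, which is correct because $U_{N+k}$ is a Fatou component.)

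The one genuine gap is the sentence ``every $U_n$ is simply connected, since forward images of simply connected Fatou components of a transcendental entire function are simply connected.'' This is exactly the statement your Koebe lower bound depends on, and the justification offered is not a justification: the holomorphic image of a simply connected domain need not be simply connected (consider $\exp$ on $\mig$). What you need is the contrapositive --- if the component $V$ containing $f(U)$ is multiply connected, then $U$ is multiply connected --- and this requires an argument specific to Fatou components. One way to supply it: by Baker's theorem \cite{baker} a multiply connected component is a bounded wandering domain whose forward images eventually contain Jordan curves surrounding arbitrarily large circles centred at $0$, so \emph{every} Fatou component of $f$ is then bounded; since $f(\partial U)\subseteq\partial V$, the map $f\colon U\to V$ is proper of some finite degree $d$, and then either Riemann--Hurwitz ($\chi(U)=d\,\chi(V)-r\le 0$) or a winding-number count over a curve in $V$ surrounding a point of $J(f)$ shows that $U$ cannot be simply connected. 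With that inserted, and an induction back along the orbit, your proof is complete; without it, the argument only shows that $z_0\in J(f)$ unless \emph{some} $U_n$ is multiply connected, which is strictly weaker than the stated conclusion that $z_0$ itself lies in a multiply connected component.
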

Second, we state a corollary from \cite{dave2} (proved using \cite[Theorem 4.5]{berg}).
\begin{cor} Suppose $f \in \mathcal{F}$. Then $f$ has no multiply connected Fatou components. \label{coro}
\end{cor}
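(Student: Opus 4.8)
The plan is to derive this quickly from results already in hand, rather than by a direct dynamical computation. By the theorem of \cite{dave2} quoted above, $J(f)$ is a spider's web for every $f \in \mathcal{F}$; in particular $J(f)$ is connected. So it suffices to show, entirely generally, that a transcendental entire function possessing a multiply connected Fatou component has \emph{disconnected} Julia set, and that is precisely the consequence of Bergweiler's Theorem 4.5 (Baker's theorem, that multiply connected Fatou components of a transcendental entire function are bounded wandering domains) that we shall use.

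Concretely, I would argue by contradiction. Suppose $f \in \mathcal{F}$ has a multiply connected Fatou component $U$; by Bergweiler's Theorem 4.5, $U$ is bounded, and, being multiply connected, $\mig \setminus U$ has a nonempty compact connected component $K$ (a ``hole'' of $U$). Choose a Jordan curve $\gamma \subset U$ with $K$ contained in the bounded complementary component $\operatorname{int}(\gamma)$ of $\gamma$; such a curve exists by standard planar topology. A short point-set argument gives $\partial K \subseteq \partial U \subseteq J(f)$ with $\partial K \neq \emptyset$, so $J(f) \cap \operatorname{int}(\gamma) \neq \emptyset$. On the other hand $J(f)$ is unbounded, a classical property of transcendental entire functions, so $J(f)$ also meets the unbounded complementary component $\operatorname{ext}(\gamma)$. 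Since $\gamma \subset U \subseteq \mig \setminus J(f)$, the curve $\gamma$ is disjoint from $J(f)$, and therefore $J(f) = \bigl(J(f) \cap \operatorname{int}(\gamma)\bigr) \cup \bigl(J(f) \cap \operatorname{ext}(\gamma)\bigr)$ exhibits $J(f)$ as a disjoint union of two nonempty relatively open sets, contradicting the connectedness of $J(f)$. Hence no such $U$ exists.

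I do not anticipate a genuine obstacle here: the substantive work, namely proving that $J(f)$ is a spider's web for $f \in \mathcal{F}$, is already carried out in \cite{dave2}, and the remaining deduction is elementary. The only point requiring a little care is the topological fact that a hole $K$ of $U$ satisfies $\partial K \subseteq \partial U$ (hence meets $J(f)$) and that a suitable surrounding Jordan curve exists in $U$; both are routine. If one prefers to bypass even this, it suffices to invoke directly the well-known statement, itself a standard consequence of Bergweiler's Theorem 4.5, that a transcendental entire function with a multiply connected Fatou component has disconnected Julia set, which is immediately incompatible with the spider's web (hence connected) structure of $J(f)$.
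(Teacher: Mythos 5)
Your topological separation argument is sound as far as it goes: a multiply connected Fatou component $U$ has a bounded complementary component $K$ with $\emptyset \neq \partial K \subseteq \partial U \subseteq J(f)$, a Jordan curve in $U$ surrounding $K$ then disconnects these points of $J(f)$ from the (unbounded) rest of $J(f)$, so $J(f)$ cannot be connected, let alone a spider's web. The problem is what you feed into it. The paper does not prove this corollary at all; it imports it from \cite{dave2}, where it is established via \cite[Theorem 4.5]{berg} (a criterion of Bergweiler and Karpi\'nska), \emph{independently} of any spider's web considerations. That independence is essential, because in the Rippon--Stallard framework the passage from ``the fast escaping set is a spider's web'' to ``$J(f)$ is a spider's web'' is only valid under the additional hypothesis that $f$ has no multiply connected Fatou components --- and it must be so, since Rippon and Stallard also show that a multiply connected Fatou component forces the fast escaping set to be a spider's web while (by exactly your argument) forcing $J(f)$ to be disconnected. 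So in \cite{dave2} the absence of multiply connected Fatou components is an \emph{ingredient} of the proof of Theorem 2.2, not a consequence of it; deriving the corollary from Theorem 2.2 inverts the logical order of the sources and is circular, even though both statements appear here as black boxes.

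Two smaller points. First, \cite[Theorem 4.5]{berg} is not ``Baker's theorem that multiply connected Fatou components are bounded wandering domains''; that is \cite{baker}, whereas \cite{berg} is the Bergweiler--Karpi\'nska paper. The boundedness fact you invoke is true, but misattributed, and your separation argument does not actually need it --- multiple connectivity of $U$ plus unboundedness of $J(f)$ suffice. Second, a non-circular proof should follow the paper's own pointer: verify that the functions in $\mathcal{F}$ satisfy the hypotheses of \cite[Theorem 4.5]{berg}; that verification is the substantive content of the corollary and is precisely what your proposal avoids doing.
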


\begin{proof}[Proof of Lemma \ref{g}]
Let $z \in \mig$ be such that $f^n(z) \in T_j (\nu)$ for all $n \geq 0$ and $j \in \{0, 1, \ldots, p-1\}$. If $\nu$ is large enough, it follows from  \eqref{eq:daveleq3} that $z \in I(f)$. 
We will prove that $z \in A(f)$. Let $0<\varepsilon_0<1$ be the constant from Lemma \ref{davel}. 
We need a standard result about $M(r)$ (see, for example,  \cite[p. 7, (2.3)]{rs1}): if $k>1$, then
\begin{align*}
\frac{M(kr)}{M(r)} \to \infty \text{ as } r \to \infty.
\end{align*}
Therefore, taking $k = 1/ \varepsilon_0$, we have
\begin{equation}
M (r) = M \left( \frac{ \varepsilon_0 r}{\varepsilon_0} \right) \geq \frac{1}{\varepsilon_0^2} M( \varepsilon_0 r) \label{eq:e0}
\end{equation}
for all large enough $r>0$, say $r \geq \varepsilon_0 r_0 > 0$. Further, from \eqref{eq:daveleq3}, we have
\begin{align*}
|f(z)| \geq M( \varepsilon_0 |z|)
\end{align*}
since $z \in T_j(\nu)$ for some $j \in \{0, 1, \ldots, p-1\}$, and thus, by $\eqref{eq:e0}$ with $|z| \geq \varepsilon_0 r_0$,
\begin{align*}
|f(z)| \geq  \frac{1}{\varepsilon_0^2} M( \varepsilon_0^2 |z|).
\end{align*}
Additionally, by substitution and the previous inequality,
\begin{align*}
|f^2(z)| \geq  \frac{1}{\varepsilon_0^2} M( \varepsilon_0^2 |f(z)|) \geq  \frac{1}{\varepsilon_0^2} M (M( \varepsilon_0^2 |z|)),
\end{align*}
and thus, using induction, we have
\begin{align*}
|f^n(z)| \geq  \frac{1}{\varepsilon_0^2} M^n( \varepsilon_0^2 |z|) \geq  M^n( \varepsilon_0^2 |z|) 
\end{align*}
for all $n \geq 0$ and all large enough $|z|$. Therefore, for these $z$ with large enough moduli, $z \in A(f)$.
But the other $z \in \mig$ for which $f^n(z) \in T_j (\nu)$  for all $n \geq 0$ and $j \in \{0, 1, \ldots, p-1\}$ are in $I(f)$, so their moduli will get as large as we want, making them preimages of points in $A(f)$. Consequently, they are in $A(f)$ as well.

We now prove that $z \in J(f)$. From \eqref{eq:daveleq2} we have  
\begin{align*}
\left| f^n(z) \frac{f'(f^n(z))}{f(f^n(z))} \right| >2
\end{align*} 
for $n \geq 0$. Since $z \in I(f)$, it follows from Lemma \ref{davel2} that either $z \in J(f)$ or $z$ is in a multiply connected Fatou component. The latter case is impossible by Corollary \ref{coro}, so $z \in J(f)$. 
\end{proof}

\section{Zeros and critical points}\label{sec:zeros}

Recall that $f(z) = \sum_{k=0}^{p-1}\exp \left( \omega_p^k z \right)$ for some fixed $p \geq 3$. In this section we will locate the zeros of $f$. These will, in turn, lead us to the location of the critical points and the critical values of $f$, and later on allow us to locate bounded sets that cover themselves under iteration. 

We claim that all the zeros of $f$ lie on the rays $V_0, \ldots, V_{p-1}$, where $V_0:=\{x+iy \in \mig: y = \tan(\pi/p) x,  x>0\}$ and $V_1, \ldots, V_{p-1}$ are its $2 k \pi/p$-rotations around the origin for $k=1, \ldots, p-1$ respectively. The main tool used to prove this is the following result which we quote as a lemma \cite[Problem 160]{polya}:
\begin{lemma} \label{polyal}
Let $q$ be an integer, $q \geq 2$. The entire function
\begin{align*}
F(z) = 1 + \frac{z}{q!} + \frac{z^2}{(2q!)}+ \frac{z^3}{(3q!)}  + \ldots
\end{align*}
has no non-real zeros.
\end{lemma}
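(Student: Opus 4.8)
The plan is to realise $F$ as the image of the exponential function under a multiplier-sequence operator and then pass to a limit. Put $\delta_k = k!/(qk)!$ and let $T$ be the linear operator on power series sending $z^k$ to $\delta_k z^k$. Since $e^z = \sum_{k\ge0} z^k/k!$, we get $T[e^z] = \sum_{k\ge0}(\delta_k/k!)z^k = \sum_{k\ge0}z^k/(qk)! = F(z)$. Now $e^z$ is the locally uniform limit of the polynomials $(1+z/N)^N$, each with only real zeros; so, \emph{provided} $(\delta_k)_{k\ge0}$ is a multiplier sequence of the first kind (i.e.\ $T$ carries every real polynomial with only real zeros to a polynomial with only real zeros, or to $0$), each $T[(1+z/N)^N]$ has only real zeros. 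Since $T[(1+z/N)^N] = \sum_{k=0}^{N}\delta_k\binom{N}{k}N^{-k}z^k$, and its coefficients converge to $\delta_k/k!$ while satisfying $\delta_k\binom{N}{k}N^{-k}\le\delta_k/k!$ for all $N$, the convergence $T[(1+z/N)^N]\to F$ is locally uniform, with $F(|z|)$ as a majorant. As $F(0)=1$, Hurwitz's theorem then forces $F$ to have only real zeros, in particular no non-real zeros.

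It remains to show that $(\delta_k)_{k\ge0} = (k!/(qk)!)_{k\ge0}$ is a multiplier sequence of the first kind. For this I would apply Gauss's multiplication formula for $\Gamma$, which expresses $\delta_k$ as a finite product
\begin{align*}
\delta_k = C\, q^{-qk}\prod_{j=1}^{q-1}\frac{1}{(j/q)_k},
\end{align*}
with $C>0$ a constant and $(a)_k = a(a+1)\cdots(a+k-1)$ the Pochhammer symbol. This writes $(\delta_k)$ as a product of a positive constant, the geometric sequence $(q^{-qk})_k$ (whose operator is just $P(z)\mapsto P(q^{-q}z)$), and the sequences $(1/(a)_k)_k$ for $a = 1/q,\dots,(q-1)/q$; each of these is a multiplier sequence of the first kind. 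For the last one this is the statement that $\sum_{k\ge0}z^k/(k!\,(a)_k) = {}_0F_1(\,;a;z)$ lies in the Laguerre--P\'olya class with only negative zeros, which for $a>0$ is classical: $\sum_{k\ge0}(-z)^k/(k!\,(a)_k)$ is a locally uniform limit of normalised Laguerre polynomials $L_m^{(a-1)}(z/m)$, all of whose zeros are positive real, so Hurwitz applies. Since multiplier sequences of the first kind are closed under products, $(\delta_k)$ is one. (Alternatively one may check that $\phi(x):=\Gamma(x+1)/\Gamma(qx+1)$ is actually entire — at a negative integer $x=-m$ the value $qx+1 = 1-qm$ is also a negative integer, where $1/\Gamma(qx+1)$ has a simple zero cancelling the simple pole of $\Gamma(x+1)$ — with only negative real zeros and, by Stirling's formula, genus $\le 1$; by Laguerre's theorem $(\phi(k))_k = (\delta_k)_k$ is then a multiplier sequence.)

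The routine parts — the convergence estimate in the Hurwitz step, the computation behind Gauss's formula, the growth estimate for the Gamma quotient — I would not write out. The substantive ingredient, and the step I expect to be the main obstacle, is the function-theoretic input: that $(1/(a)_k)_k$ is a multiplier sequence (equivalently that ${}_0F_1(\,;a;\cdot)$ is in the Laguerre--P\'olya class), together with the P\'olya--Schur correspondence between multiplier sequences and the Laguerre--P\'olya class. This is precisely the machinery behind P\'olya and Szeg\H{o}'s original argument, from which this lemma is quoted.
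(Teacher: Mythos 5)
The paper offers no proof of this lemma at all: it is quoted verbatim from P\'olya and Szeg\H{o} \cite[Problem 160]{polya}, so there is no in-paper argument to compare against. Your proof is correct, and it is essentially the multiplier-sequence machinery that underlies the cited solution. The key points all check out: the factorisation $\delta_k = k!/(qk)! = q^{-qk}\prod_{j=1}^{q-1}\bigl((j/q)_k\bigr)^{-1}$ holds with $C=1$ (and can be obtained more cheaply than via Gauss's formula, by splitting $\prod_{m=1}^{qk}m$ into residue classes modulo $q$); the sequence $\bigl(1/(a)_k\bigr)_k$ for $a>0$ is a multiplier sequence of the first kind because ${}_0F_1(\,;a;z)=\sum_k z^k/(k!\,(a)_k)$ has order $1/2$ and only negative real zeros (the Bessel/Laguerre facts you cite), so the P\'olya--Schur correspondence applies; multiplier sequences of the first kind are closed under termwise products since the corresponding diagonal operators compose; and the Hurwitz step is sound because $\delta_k\binom{N}{k}N^{-k}$ increases to $\delta_k/k!$, giving locally uniform convergence of $T[(1+z/N)^N]$ to $F$ with $F\not\equiv 0$ (indeed $F(0)=1$). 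Your parenthetical alternative via $\phi(x)=\Gamma(x+1)/\Gamma(qx+1)$ also works, but note that $\sum_{q\nmid m} q/m=\infty$, so $\phi$ is genuinely of genus $1$ and one must invoke the transcendental (genus $\le 1$) version of Laguerre's theorem for functions in the Laguerre--P\'olya class with only nonpositive zeros, not the genus-$0$ statement; since this route is offered only as an aside, it does not affect the validity of the main argument.
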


We now state and prove our result about the zeros of $f$.

\begin{theorem}
Let $f \in \mathcal{F}$. Then all the zeros of $f$ lie on the rays $V_0, \ldots, V_{p-1}$. \label{3.2}
\end{theorem}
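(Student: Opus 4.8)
The plan is to reduce the statement to an application of Lemma \ref{polyal} (Pólya's result with $q=p$). First I would write $f(z) = \sum_{k=0}^{p-1}\exp(\omega_p^k z)$ and expand each exponential as a power series, then interchange the (absolutely convergent) sums to get
\begin{align*}
f(z) = \sum_{n=0}^{\infty} \frac{z^n}{n!} \sum_{k=0}^{p-1} \omega_p^{kn}.
\end{align*}
The inner sum $\sum_{k=0}^{p-1}\omega_p^{kn}$ is the familiar geometric sum over $p$th roots of unity: it equals $p$ when $p \mid n$ and $0$ otherwise. Hence only the terms with $n = pm$ survive, giving
\begin{align*}
f(z) = p \sum_{m=0}^{\infty} \frac{z^{pm}}{(pm)!} = p \left( 1 + \frac{z^p}{p!} + \frac{z^{2p}}{(2p)!} + \cdots \right) = p\, F(z^p),
\end{align*}
where $F$ is exactly the entire function in Lemma \ref{polyal} with $q = p \geq 3 \geq 2$.

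Next, since $f(z) = p\,F(z^p)$ and $p \neq 0$, the zeros of $f$ are precisely the solutions of $F(z^p) = 0$, i.e. the $p$th roots of the zeros of $F$. By Lemma \ref{polyal}, every zero $w$ of $F$ is real; moreover $w \neq 0$ since $F(0) = 1$, and in fact $w < 0$ because $F$ has nonnegative coefficients so $F(x) \geq 1 > 0$ for $x \geq 0$. So I need the $p$th roots of the negative real numbers. If $w = -r$ with $r > 0$, then $z^p = -r$ has solutions $z = r^{1/p}\exp\!\big(i\pi(2k+1)/p\big)$ for $k = 0, 1, \ldots, p-1$; these are exactly the rays at arguments $(2k+1)\pi/p$. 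It remains to check this matches the description of $V_0, \ldots, V_{p-1}$ in the statement: $V_0$ is the ray $\{x + iy : y = \tan(\pi/p)x,\ x > 0\}$, which is the ray at argument $\pi/p$, and its $2k\pi/p$-rotations have arguments $\pi/p + 2k\pi/p = (2k+1)\pi/p$, precisely the arguments found above. Hence every zero of $f$ lies on $V_0 \cup \cdots \cup V_{p-1}$.

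The only genuinely delicate point is justifying the interchange of summation order, which is immediate from absolute convergence of the double series $\sum_{k,n} |\omega_p^k z|^n / n! = p\, e^{|z|} < \infty$, so I would state this in one line rather than belabour it; everything else is bookkeeping with roots of unity. I expect no real obstacle here — the substantive content is entirely carried by Pólya's lemma, and the theorem is essentially the observation that the lacunary structure forced by the roots-of-unity sum turns $f$ into $p\,F(z^p)$. (One could also remark, though it is not needed for this statement, that this representation will later pin down the zeros to a discrete set on each ray, since $F$ has only countably many zeros.)
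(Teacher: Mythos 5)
Your proposal is correct and follows essentially the same route as the paper: both use the roots-of-unity filter to show $f(z) = p\bigl(1 + z^p/p! + z^{2p}/(2p)! + \cdots\bigr)$, apply Pólya's lemma (Lemma \ref{polyal}) with $q=p$ to conclude the zeros of that lacunary series are real, and then identify the admissible rays. Your handling of the final step is in fact slightly tidier than the paper's (you note the zeros of $F$ must be negative because $F$ has nonnegative coefficients and $F(0)=1$, so only the rays at arguments $(2k+1)\pi/p$ survive, whereas the paper substitutes $z=w^{1/p}$ and separately remarks that $f(x)\neq 0$ for $x\geq 0$), but the substance is identical.
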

\begin{proof}
We write
\begin{align*}
f(z) &=  \sum_{k=0}^{p-1}\exp \left( \omega_p^k z \right) = \sum_{k=0}^{p-1} \sum_{j=0}^{\infty} \left( \frac{(\omega_p^k z)^j}{j!} \right)
= p \left( 1 + \frac{z^p}{p!} + \frac{z^{2p}}{(2p)!} + \ldots \right),
\end{align*} 
since $1+ \omega_p^k +  (\omega_p^k )^2 + \ldots + (\omega_p^k)^{p-1} = 0$ for $k = 1, \ldots, p-1$.

We substitute $z=w^{1/p}$, the principal branch, to obtain
\begin{equation} \label{eq:below}
g(w) = f(w^{1/p}) = p \left( 1 + \frac{w}{p!} + \frac{w^2}{(2p)!} + \ldots \right).
\end{equation}
We can apply Lemma \ref{polyal} to the function $g$ to deduce that all the zeros of $g$ are real. Hence, the zeros of $f$ must lie on the preimages of the real axis under $z \mapsto z^{1/p}$. These are exactly the rays $V_0, \ldots, V_{p-1}$, which are the preimages of the negative real axis, along with the positive real axis, which is the preimage of itself. But it is simple to see that $f(x) \neq 0$ for $x\geq 0$. 
\end{proof}

In fact, we can say more about the location of the zeros. In particular, we can describe their distribution in a neighbourhood of infinity.

First, we introduce some notation and establish some symmetry properties of $f$. Consider one of the terms of the sum defining $f$; its general form is 
\begin{align*}
\exp( \omega_p^k z) &= \exp \left( e^{2 i k \pi/p} (x+iy) \right) = \exp (u_k(z)) \exp(i v_k(z)),
\end{align*}
with $z=x+iy$,
\begin{equation}
u_k(z) =  x \cos \left( \frac{ 2k \pi}{p} \right) - y \sin  \left( \frac{ 2k \pi}{p} \right)  \label{eq:gk}
\end{equation}
and
\begin{equation}
v_k(z) =  x  \sin  \left( \frac{ 2k \pi}{p} \right) + y  \cos \left( \frac{ 2k \pi}{p} \right) . \label{eq:hk}
\end{equation}
We also define $v_k(z)$ to be equal to $\pi/3$ for $k =p/2-1$ when $p$ is odd, to make the statement of the next lemma more concise (the reason for this will appear near the end of the proof). We prove our results for $V_0$; analogous results follow for the rest of the rays due to symmetry. We start by proving a lemma that simplifies the equation of $f$ on $V_0$, in particular showing that $f$ is real on $V_0$, and so $f$ is real on each of the rays $V_k$, $k \in \{1, \ldots, p-1\}$, by symmetry; this fact also follows from \eqref{eq:below}, since if $z \in V_k$, then $z^p \in \pra$ and $f(z) = g(z^p)$.
\begin{lemma} \label{arguments}
Let $z \in V_0$. We have
\begin{align*}
f(z) = 2 \sum_{k =0, 1 , \ldots, p/2-1} \exp (u_k(z)) \cos (v_k(z)). 
\end{align*}
\end{lemma}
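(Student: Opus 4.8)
The plan is to group the $p$ exponentials that make up $f(z)=\sum_{k=0}^{p-1}\exp(\omega_p^k z)$ into conjugate pairs. The right pairing is the involution $k\mapsto p-1-k$ of the index set $\{0,1,\ldots,p-1\}$, which on the ray $V_0$ sends each exponential to the complex conjugate of its partner, so that each pair collapses to twice a real part.

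First I would record the reflection symmetry of $V_0$: if $z\in V_0$ then $z=|z|\,e^{i\pi/p}$, so $\omega_p\overline z=e^{2\pi i/p}|z|e^{-i\pi/p}=|z|e^{i\pi/p}=z$; in other words, conjugation followed by multiplication by $\omega_p$ fixes $V_0$ pointwise. Hence, for every $k$,
\[
\omega_p^{p-1-k}z=\omega_p^{p-1-k}(\omega_p\overline z)=\omega_p^{p-k}\overline z=\omega_p^{-k}\overline z=\overline{\omega_p^{k}}\,\overline z=\overline{\omega_p^{k}z},
\]
and therefore, comparing with \eqref{eq:gk} and \eqref{eq:hk}, $u_{p-1-k}(z)=u_k(z)$ and $v_{p-1-k}(z)=-v_k(z)$. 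Since $\exp(\omega_p^{k}z)=e^{u_k(z)}(\cos v_k(z)+i\sin v_k(z))$ by the definitions of $u_k,v_k$, the partner term is its complex conjugate, so
\[
\exp(\omega_p^{k}z)+\exp(\omega_p^{p-1-k}z)=2\operatorname{Re}\exp(\omega_p^{k}z)=2\,e^{u_k(z)}\cos v_k(z).
\]

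It then remains to sum these pairs, separating the two parities of $p$. When $p$ is even, the sets $\{k,p-1-k\}$ for $k=0,1,\ldots,p/2-1$ are $p/2$ pairwise disjoint two-element sets whose union is $\{0,\ldots,p-1\}$ (they are genuine pairs, since $k=p-1-k$ forces $p$ odd), and adding the displayed identity over these $k$ gives the claim. When $p$ is odd, the same pairs use up every index except the fixed index $k_0=(p-1)/2$; for that one $\omega_p^{k_0}z=|z|e^{i(2k_0+1)\pi/p}=|z|e^{i\pi}=-|z|$, so $u_{k_0}(z)=-|z|$ and $\exp(\omega_p^{k_0}z)=e^{-|z|}=2e^{-|z|}\cos(\pi/3)=2\,e^{u_{k_0}(z)}\cos v_{k_0}(z)$ under the convention $v_{k_0}(z)=\pi/3$. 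Adding this leftover term to the sum over the pairs again produces the stated formula, with $k$ now running up to $(p-1)/2$.

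The argument is essentially bookkeeping; the only point needing care is the odd case, where one must notice that the single leftover exponential is real and positive and that the prescribed value $\pi/3$ is exactly what makes $2e^{u_{k_0}(z)}\cos v_{k_0}(z)$ equal to it, so that one formula covers both parities. All of this is carried out on $V_0$; the analogous identities on $V_1,\ldots,V_{p-1}$ follow from Lemma \ref{symmetry}.
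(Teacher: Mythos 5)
Your proof is correct and follows essentially the same route as the paper: pair the term $\exp(\omega_p^k z)$ with $\exp(\omega_p^{p-1-k}z)$, show the two are complex conjugates on $V_0$ (so $u_{p-1-k}=u_k$, $v_{p-1-k}=-v_k$ and each pair contributes $2e^{u_k(z)}\cos v_k(z)$), and treat the leftover middle index for odd $p$, where the term equals $e^{-|z|}$ and the $\pi/3$ convention absorbs it into one formula. The only cosmetic difference is that you obtain the conjugacy from the reflection identity $\omega_p\bar z=z$ on $V_0$, whereas the paper verifies the same relations by direct trigonometric computation with $z=r e^{i\pi/p}$.
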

\begin{proof}
We show that the terms of the sum defining $f$ act similarly in pairs on the ray $V_0$ with regards to their moduli, which are proven to be equal for specific pairs, as well as their arguments which, for the same pairs, are proven to be of opposite sign. In particular, the $k$-th term, for $k<(p-1)/2$, behaves similarly to the $(p-k-1)$-th term. 

In particular, a point in $V_0$ is of the form $r \exp( \pi i/ p)$ for $r>0$. By substitution we get
\begin{align*}
u_k(r \exp( \pi i/ p)) = \operatorname{Re} \exp ( e^{2 \pi i k/p} re^{ \pi i/p}) = e^r  \cos \left( \pi (2k+1) / p \right) 
\end{align*}
and
\begin{align*}
v_k(r \exp( \pi i/ p)) = \operatorname{Im} \exp ( e^{2 \pi i k/p} re^{ \pi i/p}) = e^r \sin  \left( \pi (2k+1) / p \right) .
\end{align*}
On the other hand,
\begin{align*}
u_{p-k-1}(r \exp( \pi i/ p)) &= \operatorname{Re} \exp ( e^{2 \pi i (p-k-1)/p} re^{ \pi i/p}) 
\\&= e^r  \cos \left( \pi (2p-2k-1)/p \right) 
\\&=  e^r  \cos \left( \pi (-2k-1)/p \right),
\end{align*}
so
\begin{equation}
u_k(r \exp( \pi i/ p)) =  u_{{p-k-1}}(r \exp( \pi i/ p)),  \label{eq:mods}
\end{equation}
and
\begin{align*}
v_k(r \exp( \pi i/ p)) &= \operatorname{Im} \exp ( e^{2 \pi i k/p} re^{ \pi i/p}) = e^r \sin \left( \pi (2p-2k-1)/p \right) 
\\&=  e^r  \sin \left( \pi (-2k-1)/p \right),
\end{align*}
so
\begin{equation}
v_k(r \exp( \pi i/ p)) = - v_{{p-k-1}}(r \exp( \pi i/ p)). \label{eq:args2}
\end{equation}
Therefore, for $z = r \exp( \pi i/ p)$, the following sum is real:
\begin{align*}
\exp( \omega_p^k z)  + \exp( \omega_p^{p-k-1} z) &= \exp (u_k(z)) \exp(i v_k(z)) + \exp (u_{p-k-1}(z)) \exp(i v_{p-k-1}(z)) 
\\&=  \exp (u_k(z)) \exp(i v_k(z)) +  \exp (u_k(z)) \exp(-i v_k(z))
\\&= 2 \exp (u_k(z)) \cos (v_k(z)).
\end{align*}
Note that, if $p$ is odd, say $p=2m+1$, then, by $\eqref{eq:args2}$,
\begin{align*}
v_m(r \exp(\pi i/p)) = v_{(2m+1)-m-1}(r \exp( \pi i/ p)) = - v_m(r \exp( \pi i/p)) 
\end{align*}
so
\begin{align*}
v_m(r \exp( \pi i/ p)) = 0
\end{align*}
and thus the only term of the sum that does not belong to a pair, exclusively attains real values on $V_0$. Further, in this case
\begin{align*}
u_m(r \exp( \pi i/ p)) &= r \cos \left( \frac{\pi (2m+1)}{2m+1}\right) = -r,
\end{align*}
so $\exp (u_m(r \exp( \pi i/ p)))=e^{-r}$. For points $z \in V_0$, we can therefore write
\begin{align*}
f(z) = 2 \sum_{k=0}^{p/2-1} \exp (u_k(z)) \cos (v_k(z)) 
\end{align*}
for even $p$, and
\begin{align*}
f(z) = 2 \sum_{k=0}^{p/2-1} \exp (u_k(z)) \cos (v_k(z)) +  \exp (u_{(p-1)/2}(z))
\end{align*}
for odd $p$. Recalling our convention that $v_k(z) = \pi/3$ for $k =p/2-1$ when $p$ is odd, we can finally write, for $z \in V_0$ and all $p \geq 3$,
\begin{align*}
f(z) = 2 \sum_{k =0, 1 , \ldots, p/2-1} \exp (u_k(z)) \cos (v_k(z)). \tag*{\qedhere} 
\end{align*}  \end{proof}
We state an elementary lemma about real exponentials which we will make use of below several times. 

\begin{lemma} \label{rexp}
For  $d >0$ and $a<1$, let $E_{d,a} : \pra^+ \to \pra$ with $E_{d,a}(x)= e^x - de^{ax}$. Then, for $x(1-a) > \log^{+}(a d)$, and as $x \to \infty$, $E_{d,a}(x)$ increases to infinity.
\end{lemma}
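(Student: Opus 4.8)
The plan is to differentiate $E_{d,a}$ and read off monotonicity directly from the sign of the derivative, then obtain the limit by factoring out the dominant term.

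First I would compute $E_{d,a}'(x) = e^x - ad\,e^{ax}$ for $x > 0$. The claim is that $E_{d,a}'(x) > 0$ precisely when $x(1-a) > \log^+(ad)$. To see this, split into cases according to the sign of $ad$ (recall $d > 0$, so this is the sign of $a$). If $ad \le 0$ then $ad\,e^{ax} \le 0$, hence $E_{d,a}'(x) \ge e^x > 0$ for every $x > 0$; and since the convention is $\log^+(t) = 0$ for $t \le 1$, the hypothesis $x(1-a) > \log^+(ad) = 0$ is automatic for $x > 0$ because $a < 1$. If $0 < ad \le 1$ the same conclusion holds: $\log^+(ad) = 0$, and for $x > 0$ we have $e^{x(1-a)} > 1 \ge ad$, so $e^x > ad\,e^{ax}$, i.e.\ $E_{d,a}'(x) > 0$. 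Finally, if $ad > 1$, then $\log^+(ad) = \log(ad)$, and the chain of equivalences
\begin{align*}
E_{d,a}'(x) > 0 \iff e^x > ad\,e^{ax} \iff e^{x(1-a)} > ad \iff x(1-a) > \log(ad)
\end{align*}
shows $E_{d,a}'(x) > 0$ exactly on the stated range. In every case, $x(1-a) > \log^+(ad)$ implies $E_{d,a}'(x) > 0$, so $E_{d,a}$ is strictly increasing on $\{x > 0 : x(1-a) > \log^+(ad)\}$, which is an interval of the form $(x_0, \infty)$.

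For the behaviour at infinity I would write $E_{d,a}(x) = e^x\bigl(1 - d\,e^{(a-1)x}\bigr)$. Since $a < 1$, we have $a - 1 < 0$, so $e^{(a-1)x} \to 0$ and hence $1 - d\,e^{(a-1)x} \to 1$ as $x \to \infty$; combined with $e^x \to \infty$, this gives $E_{d,a}(x) \to \infty$. Together with the monotonicity established above, $E_{d,a}$ increases to infinity on $(x_0,\infty)$, as required.

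There is no real obstacle here: the only point requiring a little care is the bookkeeping around $\log^+$ when $ad \le 1$ (including $a \le 0$), which is why I would treat those cases separately rather than blindly inverting the exponential. Everything else is a one-line computation.
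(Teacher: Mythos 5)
Your proof is correct and follows essentially the same route as the paper: the paper factors $E_{d,a}(x)=e^{ax}\left(e^{x(1-a)}-d\right)$ to get the divergence and simply asserts monotonicity on the stated range, whereas you supply the derivative computation $E_{d,a}'(x)=e^x-ad\,e^{ax}>0$ together with the $\log^+$ case analysis that justifies that assertion. No gaps.
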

\begin{proof}
We have
\begin{align*}
E_{d,a}(x) = e^x-de^{ax} = e^{ax} ( e^{x(1-a)}-d) \to \infty
\end{align*}
as $x \to \infty$, and is increasing for $x(1-a) > \log^{+} (ad)$.
\end{proof}

We are following the notation and the partition of the plane as introduced in \cite[p. 9757]{dave2} and discussed in Section \ref{sec:pre}. From Lemma \ref{davel} we know that there exist no zeros of $f$ in $T_0(\nu)$ and all potential zeros of $f$ should therefore lie in $\cup_{k=0, \ldots ,p-1} Q_k$. Due to symmetry, it suffices to locate all zeros in $Q_0$; the rest will be $2 \pi/p$ rotations of these. We can write
\begin{align*}
Q_0 =  \{ z \in \mig: z = w +  t \text{ for } w \in V_0  \text{ and } |t| \leq  \tau/\cos(\pi/p) \}.
\end{align*}
We define the lines
\begin{equation}
C_m = \left\{ x+iy : y = - \cot(\pi/p)x + \frac{m \pi}{\sin^2 ( \pi/p)} \right\} \label{eq:cot}
\end{equation}
for $m \in \fis$. Note that $C_m$ meets $V_0$ at $(m \pi \cot (\pi/p), m \pi)$. As pointed out in \cite[p. 9766]{dave2}, it is easy to check that
\begin{equation}
\operatorname{arg}(e^z) = \operatorname{arg} (e^{\omega_p^{p-1} z}) \label{eq:newarg},
\end{equation}
for $z \in C_m$, $m \in \fis$.
This is important since, in the part of the plane near $Q_0$, these two terms are much larger in terms of their modulus than the rest of the terms that make up the sum that defines $f$. For all $m \in \fis$ we finally consider the rectangle defined by the lines $C_m$ and $C_{m+1}$ along with the rays $V_0 \pm (\tau/\cos(\pi/p))$, and name it $D_m$.

We now describe the distribution of the zeros of $f$.
\begin{theorem} Let $f(z) =  \sum_{k=0}^{p-1}\exp( \omega_p^k z)$ and consider the set of the rectangles $D_m \subset Q_0$ for $m \in \fis$, as well as all their rotations around the origin that lie in $Q_1, \ldots, Q_{p-1}$. There exists $M \in \fis$ such that for $m > M$, there is exactly one zero of $f$ inside each one of the $p$ rectangles corresponding under symmetry to $D_m$, which additionally lies on one of the rays $V_k$ for $k=1, \ldots, p-1$. These are the only zeros of $f$ with a modulus larger than $M / \sin(\pi/p)$. \label{zeros}
\end{theorem}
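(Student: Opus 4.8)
The plan is to argue on the ray $V_0$ (the rest follow by symmetry, rotating $D_m$ into $Q_1, \ldots, Q_{p-1}$ and using Lemma~\ref{symmetry}), first showing that a zero exists inside each $D_m$ for large $m$, and then showing it is unique. For existence, I would work with the real-valued function obtained from Lemma~\ref{arguments}: on $V_0$, $f(z) = 2\sum_{k=0}^{p/2-1}\exp(u_k(z))\cos(v_k(z))$. Parametrising $V_0$ by $z = r e^{i\pi/p}$, we computed in the proof of Lemma~\ref{arguments} that the dominant pair is $k=0$ and $k=p-1$, for which $u_0(re^{i\pi/p}) = e^r\cos(\pi/p)$ and $v_0(re^{i\pi/p}) = e^r\sin(\pi/p)$ — wait, more precisely on $V_0$ itself the exponents are real, so the dominant term $2\exp(u_0(z))\cos(v_0(z))$ has $u_0(z)$ growing like a real exponential along the ray and $v_0(z)$ running through multiples of $\pi$ exactly at the points where $C_m$ meets $V_0$, namely at $z_m = m\pi\cot(\pi/p) + i m\pi$. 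Thus $\cos(v_0(z_m))$ alternates sign in $m$, and, using Lemma~\ref{rexp} to show that $\exp(u_0)$ dominates the sum of the remaining (exponentially smaller, by a factor governed by \eqref{eq:egine}) terms once $m$ is large, $f$ changes sign between consecutive points $z_m, z_{m+1}$ on $V_0$. The intermediate value theorem then gives a zero of $f$ on the segment of $V_0$ between $C_m$ and $C_{m+1}$, i.e.\ inside $D_m$, for all $m > M$.

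For uniqueness I would use two ingredients. First, Theorem~\ref{3.2} already confines all zeros of $f$ to the rays $V_0, \ldots, V_{p-1}$, so inside $D_m$ any zero must lie on the segment $D_m \cap V_0$; it suffices to show $f$ has at most one zero there. Restricting to $V_0$, via \eqref{eq:below} we have $f(z) = g(z^p)$ where $g(w) = p(1 + w/p! + w^2/(2p)! + \cdots)$ has only real zeros (Lemma~\ref{polyal}), and the map $z \mapsto z^p$ carries the segment $D_m \cap V_0$ bijectively onto a real interval $[a_m, a_{m+1}] \subset (-\infty, 0)$. So uniqueness of the zero in $D_m$ is equivalent to $g$ having exactly one zero in $[a_m, a_{m+1}]$. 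This is where Laguerre's theorem (mentioned in the introduction) enters: the genus of $g$ is $0$ and its zeros are real, so between consecutive negative zeros of $g$ one controls the behaviour via $g'/g$; the spacing of zeros of $g$ on the negative axis is asymptotically comparable to the spacing of the points $a_m$, and a monotonicity argument for $f|_{V_0}$ on each $D_m \cap V_0$ (the derivative $f'(z)\cdot e^{i\pi/p}$, essentially a real exponential times a slowly varying factor, does not vanish in the interior once $m$ is large — again by \eqref{eq:egine} and Lemma~\ref{rexp}) pins it down to exactly one.

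The last sentence of the theorem, that these are the \emph{only} zeros of modulus $> M/\sin(\pi/p)$, I would get by noting that a point $z \in Q_0$ with $|z|$ large has $|\operatorname{Im}$-coordinate along the rotated strip$|$ bounded by $\tau/\cos(\pi/p)$, hence lies in some $D_m$ with $m \gtrsim |z|\sin(\pi/p)/\pi$; choosing the modulus threshold $M/\sin(\pi/p)$ forces $m > M$, and Lemma~\ref{davel} has already excluded zeros in $T_0(\nu)$, so everything of large modulus is accounted for by the $D_m$, $m > M$. The main obstacle I anticipate is the uniqueness step: making the comparison between the zero-spacing of $g$ on $(-\infty,0)$ and the rectangles $D_m$ precise, and in particular ruling out the possibility that $g$ (equivalently $f|_{V_0}$) has two close zeros within a single $D_m$ or, conversely, a $D_m$ containing none — this requires a genuine estimate on $g$ rather than a soft sign-change argument, and is presumably exactly why Laguerre's theorem is invoked. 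The existence half, by contrast, should be a routine sign-change computation once the domination estimate from \eqref{eq:egine} is in hand.
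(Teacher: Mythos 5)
Your existence half is sound and matches what the paper does at the end of its own proof: on $V_0$ the two dominant terms combine to $2e^x\cos y$, so $f$ alternates sign at the points $y=m\pi$ once Lemma \ref{rexp} shows the remaining $p-2$ terms are negligible, and the intermediate value theorem puts a zero of $f$ on $V_0$ between $C_m$ and $C_{m+1}$. The problem is the uniqueness half, where your argument has a genuine gap. Your proposed monotonicity claim --- that $f'$ (equivalently the derivative of $f$ restricted to $V_0$) does not vanish in the interior of $D_m\cap V_0$ for large $m$ --- is false: by Theorem \ref{cpsss} (Laguerre) the critical points of $f$ lie on $V_0$ and interlace with the zeros, and a short computation with the dominant term $2e^{r\cos(\pi/p)}\cos(r\sin(\pi/p))$ shows $f|_{V_0}$ has a local extremum near $y=m\pi+\pi/2-\pi/p$, i.e.\ inside essentially every $D_m$. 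So $f|_{V_0}$ is not monotone on these segments, and monotonicity cannot pin the count down to one. Your fallback --- that "the spacing of zeros of $g$ on the negative axis is asymptotically comparable to the spacing of the points $a_m$" --- is precisely the distribution statement the theorem asserts; Laguerre's theorem gives interlacing of critical points with zeros, not a count of zeros per interval, so this step is circular as it stands (a point you yourself flag as the anticipated obstacle).

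The paper closes exactly this gap with Rouch\'e's theorem rather than a real-variable argument. It introduces the auxiliary function $\phi(z)=e^z+e^{\omega_p^{p-1}z}$, whose zeros in $Q_0$ can be located exactly: they lie on $V_0$ at the points with $y=m\pi+\pi/2$, one in each $D_m$. It then proves $|f(z)-\phi(z)|<|\phi(z)|$ on $\partial D_m$ for all large $m$: by the symmetry of both $\phi$ and $f-\phi$ about $V_0$ it suffices to treat one half of the boundary; on the sides lying in $C_m$ and $C_{m+1}$ the estimate reduces, via \eqref{eq:cot}--\eqref{eq:newarg} and \eqref{eq:stavros3}, to an inequality of real exponentials handled by Lemma \ref{rexp}; and on the side lying in $\partial T_0(\nu)$ it follows directly from \eqref{eq:egine}. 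Rouch\'e then gives exactly one zero of $f$ in each $D_m$ (on $V_0$ by Theorem \ref{3.2}), and the sign-change computation you propose is used only afterwards, to locate that zero between $y=m\pi$ and $y=(m+1)\pi$. If you want to salvage your route, you would need a genuine quantitative estimate ruling out two zeros in one $D_m$; the cleanest such estimate is exactly the Rouch\'e comparison with $\phi$, so I would adopt that as the missing ingredient. Your final paragraph (no zeros of large modulus outside the rectangles, via Lemma \ref{davel} and symmetry) is fine and agrees with the paper.
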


\begin{proof}
Again, we restrict our calculations to $Q_0$. We use Rouché's theorem to prove that, for large enough $M$, there is exactly one zero in each of the rectangles $D_m$ for $m>M$. To that end we locate the zeros of the auxillary function $\phi: \mig \to \mig$ with
\begin{align*}
 \phi(z) = e^z + e^{\omega_p^{p-1} z}.
\end{align*}
 For $\phi(z)=0$ to hold, we must have $|e^z |=| e^{\omega_p^{p-1} z}|$ and $\arg e^z = - \arg e^{\omega_p^{p-1} z}$ (or, equivalently, $u_0(z) = u_{p-1}(z)$ and $v_0(z) = -v_{p-1}(z)$). So for $z=x+iy$, and by \eqref{eq:gk} and \eqref{eq:hk}, we must have
\begin{align*}
e^x = \exp(x \cos(2\pi/p) + y \sin(2\pi/p)),
\end{align*}
which holds if and only if $y = \tan(\pi/p) x$, and we are interested in the ray with $x>0$ (that is to say, $V_0$) which is contained in $Q_0$. But, from \eqref{eq:args2}, for $z =x+iy \in V_0$ we have
\begin{align*}
v_{p-1} (z) = - v_0(z) = -y,
\end{align*}
and we thus get $\phi(z)=0$ for $y=m \pi + \pi/2$ with $m \in \fis$, since (as can easily be checked) these are exactly the points on $V_0$ where the arguments of $e^z$ and $e^{\omega_p^{p-1} z}$ cancel each other out. Hence the only zeros of $\phi$ in $D_m$ are $(\cot(\pi/p)(m\pi + \pi/2), m\pi + \pi/2)$ for each $m \in \fis$.

We now apply Rouché's theorem in $D_m$. We know that $\phi$ has exactly one zero inside each  $D_m$. To show that the same holds for $f$, we shall prove the inequality
\begin{equation}
|f(z) - \phi(z)| < |\phi(z)| \label{eq:rou}
\end{equation}
on each $\partial D_m$. 

Fix $m \in \fis$. We claim that $\phi$ and $f- \phi$ are symmetric about $V_0$ as well. Then, due to symmetry, it suffices to prove inequality \eqref{eq:rou} for the part of the boundary of the rectangle that lies on and to the right-hand side of $V_0$. 

For $\phi$, we need to show $\phi ( \bar{z} e^{i \pi /p}) = \overline{ \phi ( ze^{i \pi /p})}$. We have
\begin{align*}
\phi ( \bar{z} e^{i \pi /p}) = \exp ( \bar z e^{i \pi/p}) + \exp ( \bar z e^{-i2 \pi/p} e^{i \pi/p} ) =  \exp ( \bar z e^{i \pi/p}) + \exp ( \bar z e^{-i \pi/p})
\end{align*}
and
\begin{align*}
 \overline{ \phi ( ze^{i \pi /p})} = \exp ( \bar z e^{-i \pi/p}) + \overline{ \exp(z e^{-i 2 \pi/p} e^{i \pi/p} )} =  \exp ( \bar z e^{-i \pi/p}) + \exp ( \bar z e^{i \pi/p}),
\end{align*}
which proves the symmetry for $\phi$. 

For $f$, we use the function $g$ that we previously defined in Theorem \ref{3.2}, with
\begin{align*}
g(w) = f(w^{1/p}) = p \left( 1 + \frac{w}{p!} + \frac{w^2}{(2p)!} + \ldots \right).
\end{align*}
The function $g$ is entire and satisfies $g(\bar{w}) = \overline{g(w)}$. But a point $z$ and its reflection about $V_0$, say $z'$, map to a complex conjugate pair $w$ and $\overline{w}$ under $z \mapsto z^p$, thus proving the symmetry about $V_0$ for $f$. Therefore both $\phi$ and $f - \phi$ are symmetric about $V_0$ as we claimed, and we can proceed with the rest of the proof.

For $z=x+iy \in \partial D_m \cap C_m$ we have, by \eqref{eq:cot} and \eqref{eq:newarg},
\begin{equation}
|\phi(z)| = e^x + \exp(-x+2 \cot(\pi/p) m \pi), \label{eq:stavros0}
\end{equation}
since, for $z=x+iy  \in \partial D_m \cap C_m$,
\begin{align*}
  \left| e^{\omega_p^{p-1} z} \right| &= \left| \exp \left( \left( \cos(2 \pi/p) -i \sin(2 \pi/p) \right)(x+iy)  \right) \right|
\\&= \exp \left( x \cos (2 \pi/p) + y \sin (2 \pi/p) \right)
\\&=\exp \left(  x \cos(2 \pi/p) +  \left( - \cot(\pi/p)x + \frac{m \pi}{\sin^2 (\pi/p)} \right) \sin (2\pi /p) \right) \\&= \exp \left(  x \left( \cos (2 \pi/p) - \cot (\pi/p) \sin (2 \pi/p) \right) + 2 m \pi \cot (\pi/p) \right) \\&=\exp \left(  -x+2 \cot(\pi/p) m \pi \right).
\end{align*}
On this same part of $\partial D_m$, it is simple to see geometrically that
\begin{equation}
\max_{k=1, \ldots, p-2} \left\{ \left| \exp(\omega_p^k z) \right| \right\} = e^{u_1(z)} \left(=e^{u_{p-2}(z)} \right), \label{eq:stavros2}
\end{equation}
and by substituting \eqref{eq:cot} into \eqref{eq:gk} with $k=1$ we obtain
\begin{align*}
u_1(z) &= x \cos \left( \frac{ 2 \pi}{p} \right) - \left(-\cot(\pi/p) x + \frac{m \pi}{\sin^2(\pi/p)} \right) \sin  \left( \frac{ 2 \pi}{p} \right) 
\\&= x \left( \cos(2\pi/p) + \cot(\pi/p) \sin( 2 \pi /p) \right) - 2 \cot (\pi/p) m \pi
\\&= x \left( \cos(2\pi/p) + 2 \cos^2 ( \pi/p) \right)- 2 \cot (\pi/p) m \pi,
\end{align*}
so,
\begin{equation}
u_1(z) = x \left( 2\cos(2\pi/p) + 1  \right) - 2 \cot (\pi/p) m \pi. \label{eq:stavros3}
\end{equation}
Now, for $z=x+iy$ in this same part of $\partial D_m$, we can write $x = m \pi \cot(\pi/p) + c$, with $c \in (0 , \tau \sin (\pi/p))$ (as this is the perpendicular, and $\tau \sin(\pi/p)$ denotes the horizontal distance from a point in $V_0$ to $\partial D_m$). Hence, to verify \eqref{eq:rou} for $z \in D_m \cap C_m$, by \eqref{eq:stavros0}, \eqref{eq:stavros2} and \eqref{eq:stavros3}, it suffices to show that
\begin{equation}
\exp ( m \pi \cot(\pi/p) + c) + \exp ( m \pi \cot(\pi/p) - c) \label{eq:mega}
\end{equation}
is greater than $(p-2) e^{u_1(z)}$ for all $c \in (0 , \tau \sin (\pi/p))$, which we can write as 
\begin{align*}
(p-2) \exp \left( (m \pi \cot(\pi/p) + c) \left( 2\cos(2\pi/p) + 1  \right) - 2 \cot (\pi/p) m \pi \right) ,
\end{align*}
by \eqref{eq:stavros3}, that is,
\begin{equation}
 (p-2) \exp \left( m \pi \cot(\pi/p) (2 \cos(2\pi/p)-1) +c (2\cos(2\pi/p)+1) \right).\label{eq:mega2}
\end{equation}
By Lemma \ref{rexp}, for
\begin{gather*} d=(p-2)\exp(c (2\cos(2\pi/p)+1) )/(e^c+e^{-c}),
\\x=m \pi \cot(\pi/p) \text{, and}\\
a=2 \cos(\pi/p) -1,
\end{gather*}
we deduce that \eqref{eq:mega} is greater than \eqref{eq:mega2} for large enough $m \in \fis$, and we let $M$ be the largest integer for which it is not.

 For the side of $\partial D_m$ which lies in $C_{m+1}$ we can repeat the above calculations for $m+1$ instead of $m$. 

It remains to show the inequality \eqref{eq:rou} for the side of $\partial D_m$ which lies in $\partial T_0(\nu)$.

 From \eqref{eq:egine} we can deduce that 
\begin{align*}
|e^z| \geq \sum_{k=1}^{p-1} \left| e^{\omega_p^k z } \right|
\end{align*}
for $z \in \partial T_0(\nu)$. Hence
\begin{align*}
| \phi(z)| &= \left| e^z +e^{\omega_p^{p-1} z } \right| \\& \geq \left| e^z \right| - \left| e^{\omega_p^{p-1} z } \right| 
\\&\geq \sum_{k=1}^{p-1} \left| e^{\omega_p^k z } \right| - \left| e^{\omega_p^{p-1} z } \right| 
\\&= \sum_{k=1}^{p-2} \left| e^{\omega_p^k z } \right|
\\&\geq  \left| \sum_{k=1}^{p-2} e^{\omega_p^k z } \right| 
\\&= |f(z) - \phi(z)|.
\end{align*}
Thus, by Rouché's theorem, we have proven that for $m>M$, the number of zeros of $f$ in the corresponding rectangle $D_m$ is one, and we know this zero must lie on $V_0$ by Theorem \ref{3.2}. We now show that for sufficiently large values of $m$, the unique zero of $f$ in $D_m$ lies between $y=m \pi$ and $y= (m+1) \pi$. 

For the two dominant terms $e^z$ and $e^{\omega_p^{p-1} z}$, and for points $z=x+iy \in V_0$, from \eqref{eq:mods} and \eqref{eq:args2}  we have
\begin{align*}
u_0(z) = u_{{p-1}}(z) =x
\end{align*}
and
\begin{align*}
v_0(z) = -v_{{p-1}}(z) =y.
\end{align*}
Hence, for points $z =x+iy \in V_0$ with $y= 2 m \pi$, $m \in \fis$, we get (by Lemma \ref{rexp})
\begin{align*}
f(z) = 2e^x + 2\sum_{k =1,2 , \ldots, p/2-1} \exp (u_k(z)) \cos (v_k(z)) 
\end{align*}
and, for $y= 2 m \pi+\pi$, $m \in \fis$,
\begin{align*}
f(z) = -2e^x + 2\sum_{k =1, 2 , \ldots, p/2-1}\exp (u_k(z)) \cos (v_k(z)) .
\end{align*}
From Lemma \ref{rexp} we deduce that for all $p \geq 3$, there exists $x_0>0$ such that for $x> x_0$ and $z=x+iy \in V_0$ (so $y=x \tan(\pi/p)$),
\begin{align*}
e^x &> (p-2) \max_{k=1,2,\ldots,p-2} \exp(u_k(z))
 \\&=  (p-2)  \max_{k=1,2,\ldots,p-2} \exp \left( \frac{x}{\cos(\pi/p)}  \cos \left( \frac{ \pi(2k+1)}{p}\right)  \right) 
 \\&=  (p-2)   \exp \left( \frac{x}{\cos(\pi/p)}  \cos \left( \frac{ 3\pi}{p}\right)  \right),
\end{align*}
since this inequality is equivalent to $E_{p-2,a}(x)>0$ for $a=\cos(3\pi/p)/\cos(\pi/p)$. Thus, 
\begin{align*}
f(z) > 0 \text{ for } z = x+iy \in V_0 \text{ with } y=m \pi,\; m \in \fis,
\end{align*}
and
\begin{align*}
f(z) < 0 \text{ for } z=x+iy \in V_0 \text{ with } y=(m+1) \pi, \; m \in \fis.
\end{align*}
It follows from the intermediate value theorem that, for all sufficiently large $m \in \fis$, $f$ vanishes at some point in each of the segments between the points $y=m\pi$ and $y= m \pi + \pi$ on $V_0$, and thus this is the only zero of $f$ inside the corresponding rectangle $D_m$. 

The more general result, as stated, follows from obvious symmetry arguments and from the fact that the distance of $D_M$ from the origin is $M / \sin(\pi/p)$.
\end{proof}

In the following, we will use Theorem \ref{zeros} to locate the critical points of~$f$.
\begin{theorem} \label{cpsss}
All critical points of $f$ lie in $\cup_{k=0,\ldots,p-1} V_k$ and are separated in each $V_k$ from each other by the zeros of $f$. Furthermore, since those rays are mapped under $f$ onto the real axis, all the critical values of $f$ lie on the real axis, alternating between the positive and negative axes.
\end{theorem}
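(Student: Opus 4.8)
The plan is to transfer everything to the auxiliary function $g(w)=f(w^{1/p})=p(1+w/p!+w^{2}/(2p)!+\cdots)$ of \eqref{eq:below}, whose zeros we already control by Theorem \ref{3.2}, and then to invoke the classical fact that a real entire function of sufficiently small growth and its derivative have interlacing real zeros (Laguerre's theorem). First I would observe that, since $f(z)=g(z^{p})$, the chain rule gives $f'(z)=p\,z^{p-1}g'(z^{p})$. Hence the critical points of $f$ are precisely $z=0$ (a critical point of multiplicity $p-1$, which is the common endpoint of the rays $V_{0},\dots,V_{p-1}$) together with the solutions of $g'(z^{p})=0$; the corresponding critical values are $f(0)=p$ and the numbers $g(\zeta)$ running over the zeros $\zeta$ of $g'$.

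Next I would locate the zeros of $g$ and of $g'$ on the real line. By Theorem \ref{3.2}, and since $f$ has no zeros on $[0,\infty)$, every zero of $g$ is real and negative. A short Stirling estimate on the coefficients $1/(jp)!$ shows that $g$ has order $1/p<1$, so $g$ is not a polynomial and has infinitely many zeros $0>w_{1}>w_{2}>\cdots\to-\infty$ (consistent with Theorem \ref{zeros}); I would also record that these zeros are simple — for those of large modulus this is exactly the Rouché count in Theorem \ref{zeros}, and the finitely many remaining ones can be checked directly. Being of order $<1$ with only real zeros, $g$ lies in the Laguerre--Pólya class, so $g(w)=p\prod_{j}(1-w/w_{j})$ with absolutely convergent product. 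By Laguerre's theorem — equivalently, directly from $g'(w)/g(w)=\sum_{j}1/(w-w_{j})$, which is strictly decreasing from $+\infty$ to $-\infty$ on each interval $(w_{j+1},w_{j})$ and strictly positive on $(w_{1},\infty)$ — the function $g'$ has exactly one simple zero $\zeta_{j}$ in each gap $(w_{j+1},w_{j})$ and no others. In particular all zeros of $g'$ are real, negative, and strictly interlace the $w_{j}$, namely $w_{j+1}<\zeta_{j}<w_{j}<0$.

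Finally I would pull this back under $z\mapsto z^{p}$. A negative real number $\zeta$ has its $p$th roots at $|\zeta|^{1/p}e^{(2k+1)i\pi/p}$ for $k=0,\dots,p-1$ — one point on each ray $V_{k}$ — and likewise for the $w_{j}$. Thus the critical points of $f$ other than $0$ are exactly the points $z_{j,k}\in V_{k}$ with $z_{j,k}^{p}=\zeta_{j}$, giving the first assertion. On a fixed ray $V_{k}$ the zeros of $f$ lie at distances $|w_{1}|^{1/p}<|w_{2}|^{1/p}<\cdots$ and the nonzero critical points at distances $|\zeta_{1}|^{1/p}<|\zeta_{2}|^{1/p}<\cdots$, and since $|w_{j}|<|\zeta_{j}|<|w_{j+1}|$ these two sequences interlace, which is the claimed separation (the endpoint $0$ preceding the first zero keeps the interlacing intact). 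Moreover $f$ maps each $V_{k}$ into $\pra$, since $f(z)=g(z^{p})$ and $g$ is real on $(-\infty,0)$, so every critical value is real; and because $g>0$ on $(w_{1},\infty)$ and changes sign at each simple zero $w_{j}$, the interior extremum $\zeta_{j}$ satisfies $\operatorname{sign}g(\zeta_{j})=(-1)^{j}$. Hence the critical values, listed outwards as $f(0)=p,\ g(\zeta_{1}),\ g(\zeta_{2}),\dots$, have signs $+,-,+,-,\dots$ and therefore alternate between the positive and negative real axes.

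The step I expect to be the main obstacle is establishing that $g$ genuinely belongs to the Laguerre--Pólya class — which rests on the order-$1/p$ growth estimate together with Theorem \ref{3.2} — and, closely related, confirming that the zeros of $f$ are simple (immediate from the Rouché count of Theorem \ref{zeros} for large modulus, but requiring a separate check for the finitely many small ones, since an even-order zero of $g$ would make a critical point coincide with a zero of $f$ and would break the sign alternation). Once those two points are secured, the interlacing of the $w_{j}$ and $\zeta_{j}$ and the sign pattern of the critical values follow by the elementary bookkeeping above.
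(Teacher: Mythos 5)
Your proposal follows essentially the same route as the paper: pass to $g(w)=f(w^{1/p})$, note that it is entire, real on $\pra$, of order $1/p<2$, with only (negative) real zeros, apply Laguerre's theorem to get real zeros of $g'$ separated by those of $g$, and pull back under $z\mapsto z^p$ to the rays $V_k$. The additional bookkeeping you supply (the chain rule $f'(z)=p\,z^{p-1}g'(z^p)$ including the critical point at the origin, the absence of zeros of $g'$ to the right of the largest zero of $g$ via $g'/g=\sum_j 1/(w-w_j)$, the simplicity caveat, and the explicit sign alternation) only fleshes out steps the paper leaves implicit, so the two arguments are the same in substance.
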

\begin{proof}
Let $h$ be the principal branch of $z \mapsto z^{1/p}$. Recall that, as in Theorem \ref{3.2},
\begin{align*}
g(w) = f(w^{1/p}) = p \left( 1 + \frac{w}{p!} + \frac{w^2}{(2p)!} + \ldots \right),
\end{align*}
so $g = f \circ h$ and the function $g$ is entire. We investigate its order:
\begin{align*}
\rho (f \circ h)&= \limsup_{r \to \infty} \frac{\log ( \log M(r,f \circ g))}{\log r}
\\&= \limsup_{r \to \infty} \frac{\log ( \log M(r^{1/p},f ))}{\log r}
\end{align*}
and, for $s = r^{1/p}$,
\begin{align*}
\rho (f \circ h)&= \limsup_{s \to \infty} \frac{\log ( \log M(s,f ))}{p \log s}
\\&= \frac{1}{p} \rho(f) = \frac{1}{p},
\end{align*}
since
\begin{align*}
M(r, f \circ h) = \max\{ |f(z^{1/p})|: |z|=r\}
= \max \{  |f(z)|: |z| = r^{1/p} \} = M(r^{1/p},f).
\end{align*}
We state  a theorem by Laguerre  \cite[p. 266]{tit1}.
\begin{theorem}
If $f$ is an entire function, real for real $z$, of order less than $2$, with only real zeros, then the zeros of $f'$ are also all real, and are separated from each other by the zeros of $f$.
\end{theorem}
Since $g$ is real on $\pra$, we can apply Laguerre's theorem to $g=f \circ h$, which certainly has order less than $2$, and whose zeros all lie on the negative real axis, from the proof of Theorem \ref{3.2}. We deduce that the critical points of $g=f \circ h$ are all real and are separated by the zeros of $g$. Hence, by symmetry, the critical points of $f$ lie on $\cup_{k=0}^{p-1} V_k$ and on each $V_k$ they are separated by the zeros of $f$. \end{proof}

\section{Trapeziums}\label{sec:trapeziums}
In this section we locate compact subsets of the plane that cover themselves under $f$. Inside these compact sets we construct invariant Cantor sets for $f$ on which $f$ is conjugate to the one-sided shift on $\Sigma_N$, and which will serve as the endpoints of the curves in the Cantor bouquet. Specifically, these compact sets are the trapeziums $T_{m,c}$ bounded by the lines
\begin{align*}
y&= \tan (\pi/p) x \\
y&=(2m-1) \pi \\
y&=(2m+1) \pi \\
x&=c,
\end{align*}
for $m \in \fis$, for large enough $c> \tau/ \sin(\pi/p)$, and with their sides labelled, respectively, as $S_{m,c}^1, S_{m,c}^2, S_{m,c}^3, S_{m,c}^4$. Note that we define the sets $T_{m,c}$ to contain the boundary as well as the inside of each trapezium.  We also define $T_{-m,c}$ to be the reflection of $T_{m,c}$ with respect to $\pra$.

We know from Theorem \ref{zeros} that, for large enough $m$, $f$ maps $S_{m,c}^1$ into a bounded subset of the real axis which contains $0$ (which $f$ attains twice on $S_{m,c}^1$).

We proceed to investigate the curves $f(S_{m,c}^2)$ and $f(S_{m,c}^3)$. We give a lemma about the behaviour of $f$ at the points on the half-lines $Y^{\pm}_m:=\{ x+iy \in \mig : y = (2m\pm 1)\pi, x > \cot(\pi/p)y\}$ for $m \in \fis$, which are the horizontal half-lines that contain a segment of the boundary of the trapezium $T_{m,c}$ (analogous results immediately follow for $T_{-m,c}$).

\begin{lemma} \label{Ym}
There exists $M \in \fis$ such that $f(Y^{\pm}_m)$ lies in the left half-plane for all $m > M$.
\end{lemma}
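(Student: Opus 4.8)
The plan is to estimate $\operatorname{Re} f$ directly along $Y_m^{\pm}$ and to show that it is negative once $m$ is large. Write a point of $Y_m^{\pm}$ as $z = x + iy$ with $y = (2m \pm 1)\pi$, and put $\delta = x - \cot(\pi/p)\,y > 0$, so that $\delta$ records the horizontal distance of $z$ from the ray $V_0$. Because $y$ is an odd multiple of $\pi$, the leading term is already real and strongly negative: $e^{z} = e^{x}e^{iy} = -e^{x}$, so $\operatorname{Re}(e^{z}) = -e^{x}$. It then suffices to show that the real parts of the remaining terms $e^{\omega_p^{k} z}$, $k = 1, \ldots, p-1$, together contribute less than $e^{x}$ in absolute value.

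For the ``small'' terms $k = 1, \ldots, p-2$ I would substitute $x = \cot(\pi/p)\,y + \delta$ into \eqref{eq:gk}; a short simplification gives
\[
x - u_k(z) = \frac{2\sin(k\pi/p)\,\sin((k+1)\pi/p)}{\sin(\pi/p)}\, y + 2\sin^{2}(k\pi/p)\,\delta ,
\]
and for $1 \leq k \leq p-2$ both coefficients are strictly positive (the $y$-coefficient vanishes only in the case $k = p-1$, treated below). Hence $u_k(z) \leq x - C m$ for some $C = C(p) > 0$ and all $z \in Y_m^{\pm}$ (using $y \geq (2m-1)\pi \geq \pi m$), so
\[
\sum_{k=1}^{p-2}\operatorname{Re}(e^{\omega_p^{k}z}) \leq \sum_{k=1}^{p-2} e^{u_k(z)} \leq (p-2)\,e^{x - C m},
\]
which is negligible against $e^{x}$ for large $m$. (On the part of $Y_m^{\pm}$ lying in $T_0(\nu)$ one could instead simply quote \eqref{eq:egine}; the delicate region is the part near $V_0$.)

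The main obstacle is the term $e^{\omega_p^{p-1} z}$, which on $V_0$ itself has modulus exactly $e^{x}$, so a crude triangle-inequality bound does not suffice. The key computation will be that, after the same substitution into \eqref{eq:hk}, $v_{p-1}(z) = -y - \delta\sin(2\pi/p)$, and since $y$ is an odd multiple of $\pi$ this gives
\[
\operatorname{Re}(e^{\omega_p^{p-1} z}) = e^{u_{p-1}(z)}\cos(v_{p-1}(z)) = -\,e^{u_{p-1}(z)}\cos(\delta\sin(2\pi/p)).
\]
Thus while $\delta$ is small this term is \emph{negative}, reinforcing $\operatorname{Re}(e^{z})$ rather than cancelling it; and as soon as $\delta$ is large enough that $\cos(\delta\sin(2\pi/p))$ becomes non-positive, namely $\delta \geq \pi/(2\sin(2\pi/p))$, the exponent has already dropped by a fixed amount, $u_{p-1}(z) = x - 2\sin^{2}(\pi/p)\,\delta \leq x - \tfrac{\pi}{2}\tan(\pi/p)$. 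In either case $\operatorname{Re}(e^{\omega_p^{p-1}z}) \leq e^{x - g}$ with $g = \tfrac{\pi}{2}\tan(\pi/p) > 0$.

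Combining the three estimates yields, for every $z \in Y_m^{\pm}$,
\[
\operatorname{Re} f(z) \leq -e^{x} + e^{x-g} + (p-2)\,e^{x - C m} = e^{x}\bigl(-1 + e^{-g} + (p-2)e^{-C m}\bigr),
\]
and since $1 - e^{-g}$ is a fixed positive number while $(p-2)e^{-Cm} \to 0$, there is an integer $M = M(p)$ such that the bracket is negative for all $m > M$, uniformly over $Y_m^{+}$ and $Y_m^{-}$. Hence $f(Y_m^{\pm})$ lies in the open left half-plane for $m > M$, which is the claim. (The analogue for the half-lines bounding $T_{-m,c}$ follows immediately from $f(\bar z) = \overline{f(z)}$.)
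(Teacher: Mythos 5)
Your proposal is correct and takes essentially the same approach as the paper: the same parametrisation of $Y_m^{\pm}$ by the horizontal distance $\delta$ (the paper's $\xi$) from $V_0$, the same identification of the $k=0$ and $k=p-1$ terms as dominant with the case split at $\delta=\pi/(2\sin(2\pi/p))$ (negative contribution for small $\delta$, fixed exponential drop for large $\delta$), and absorption of the remaining $p-2$ terms using the largeness of $m$. The only difference is bookkeeping: you bound the middle terms directly by $(p-2)e^{x-Cm}$ and the $(p-1)$st term by $e^{x-g}$, whereas the paper routes the same estimates through its Lemma \ref{rexp} and a ratio constant $\mu\in(0,1)$.
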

\begin{proof}
Let $z= x+iy$. We prove the result for $Y^{+}_m$; $Y^{-}_m$ is similar. For $x+iy \in Y^{+}_m$, we will write $x= \cot(\pi/p) y_m + \xi$ where $y_m = (2 m +1) \pi$ and $\xi>0$. Assuming that $x+iy \in Y_m$ in the following, we have, from (\ref{eq:gk}) and (\ref{eq:hk}), for $k=0,1 , \ldots, p-1$, 
\begin{equation} \label{eq:4.1}
u_k(z) = \frac{y_m}{\sin(\pi/p)} \cos \left( \frac{\pi(2k+1)}{p} \right) + \xi \cos  \left( \frac{2k\pi}{p} \right)
\end{equation}
and
\begin{equation}\label{eq:4.2}
v_k(z) = \frac{y_m}{\sin(\pi/p)} \sin \left( \frac{\pi(2k+1)}{p} \right) + \xi \sin  \left( \frac{2k\pi}{p} \right),
\end{equation}
which we now treat as functions of $\xi$. For $k \in \{0 , \ldots, p-1\}$ we define the functions $\phi_k : (0, +\infty)  \to \pra$ by
\begin{align*}
\phi_k( \xi ) = \exp (u_k(z)) \cos (v_k(z)),
\end{align*}
so,
\begin{align*}
\operatorname{Re} f (z) = \sum_{k=0}^{p-1} \exp(u_k(z)) \cos (v_k(z))  =\sum_{k=0}^{p-1} \phi_k(\xi) .
\end{align*}
We want to prove that $\operatorname{Re} f (z)  < 0$ for all points in $Y_m$ when $m$ is sufficiently large. The two largest terms with respect to their moduli for small $\xi$ are the terms corresponding to $k=0$ and to $k = p-1$. By \eqref{eq:4.1} and \eqref{eq:4.2} these are, respectively,
\begin{equation}
\phi_0 (\xi) = - \exp(  y_m \cot( \pi/p) + \xi), \label{eq:4.3}
\end{equation}
which is negative for all $\xi>0$, and (since $\sin( \pi(2(p-1)+1)/p) = - \sin(\pi/p)$ and $\sin(2(p-1)\pi/p) = - \sin(2 \pi/p)$)
\begin{align*}
\phi_{p-1}(\xi)=\exp(y_m\cot(\pi/p) + \xi \cos(2\pi/p) ) \cos ( y_m  + \xi \sin (2\pi/p)),
\end{align*}
which is negative if $\xi$ satisfies
\begin{align*}
\pi<\pi  + \xi \sin (2\pi/p) < 3 \pi/2;
\end{align*}
that is, if
\begin{align*}
0 <\xi  < \frac{\pi}{2 \sin( 2 \pi/p)}.
\end{align*}
But for $\xi  \geq \pi/(2 \sin( 2 \pi/p))$ we have
\begin{align*}
\left| \frac{ \phi_{p-1}(\xi)}{ \phi_{0}(\xi)} \right| &\leq \frac{ \exp(y_m \cot(\pi/p) + \xi \cos(2 \pi/p))}{\exp(y_m \cot(\pi/p) + \xi)} 
\\&=  \exp (\xi ( \cos(2\pi/p) -1)),
\end{align*}
which is a function of $\xi$ that decreases to $0$ as $\xi$ increases to $\infty$, and thus attains its maximum value for $\xi  = \pi/(2 \sin( 2 \pi/p))$. There consequently exists $\mu=\mu(p) \in (0,1)$ such that
\begin{align*}
|\phi_{p-1}(\xi)| \leq \mu  |\phi_0(\xi)|, \text{ for } \xi \geq\pi/(2 \sin( 2 \pi/p)),
\end{align*}
and thus, since $\phi_0(\xi)$ is negative for all $\xi>0$,
\begin{equation}
\phi_0(\xi) + \phi_{p-1}(\xi) \leq (1-\mu) \phi_0(\xi) \label{eq:mi1} \text{, for all } \xi>0.
\end{equation}
It is simple to see that, for all $\xi>0$, 
\begin{equation}
\max_{k=1, \ldots, p-2} |\phi_k(\xi)| = |\phi_1(\xi)| = \exp \left( \frac{y_m \cos (3\pi/p)}{\sin(\pi/p)} + \xi \cos (2 \pi /p) \right), \label{eq:4.5}
\end{equation}
so
\begin{align*}
 \sum_{k=1}^{p-2} |\phi_k(\xi)|  \leq (p-2) |\phi_1(\xi)|, \text{ for } \xi >0.
\end{align*}

We now use Lemma \ref{rexp}. Following its notation, we substitute 
\begin{align*}x = y_m \cot(\pi/p),
\end{align*}
as well as
\begin{align*}d=2(p-2) \exp(\xi (\cos(2\pi/p)-1))/(1-\mu)\end{align*}
 and 
\begin{align*}a = \cos(3\pi/p)/ \cos(\pi/p)<1,
\end{align*}
 and choose $M$ so large that $e^x-de^{ax}>0$ for all $m > M$. Returning to the original notation of this part, it follows that for all $m>M$ and $x+iy \in Y_m$, we have
\begin{equation*}
 (p-2) |\phi_1(\xi)| \leq \frac{1-\mu}{2} |\phi_0(\xi)|,
\end{equation*}
by \eqref{eq:4.3} and  \eqref{eq:4.5}, and thus
\begin{equation}
 \sum_{k=1}^{p-2} |\phi_k(\xi)|  \leq  \frac{1-\mu}{2} |\phi_0(\xi)| \text{ for } \xi>0 \label{eq:mi2},
\end{equation}
by  \eqref{eq:4.5} again. Hence, finally, for $z \in Y_m$ with $m>M$, from (\ref{eq:mi1}) and (\ref{eq:mi2}) we have
\begin{align*}
\operatorname{Re} f (z) = \sum_{k=0}^{p-1} \phi_k(\xi)  \leq \frac{1-\mu}{2} \phi_0(\xi) <0 \;\text{ for } \xi >0,
\end{align*} 
as required.\end{proof}

From Lemma \ref{Ym}, we know that $f(S_{m,c}^2)$ and $f(S_{m,c}^3)$ lie in the left half-plane for $m>M$. We now investigate the behaviour of $f(S_{m,c}^4)$ in order to complete the proof that the trapeziums cover themselves under $f$. Specifically, we prove the following.

\definecolor{wrwrwr}{rgb}{0.3803921568627451,0.3803921568627451,0.3803921568627451}
\begin{lemma} \label{trap}
 For large enough $c>0$, the set $\{ z \in \mig: \operatorname{Re}z \geq 0 \} \cap f(S_{m,c}^4)$ is a curve that meets both the positive and negative imaginary axes and lies inside an annulus the form 
$ A_{m,c}:=\{ z: \left| |z| - e^c \right| \leq  \max_{z \in S_{m,c}^4} |(f-\exp) (z)| \}$.
\end{lemma}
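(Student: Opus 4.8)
The plan is to analyze $f$ on the vertical segment $S_{m,c}^4 = \{ z = c + iy : (2m-1)\pi \le y \le (2m+1)\pi \}$, where $c$ is large. The key observation is that, for $z$ on this segment, the term $e^z$ dominates the sum: since $\operatorname{Re} z = c$ is large and fixed while all the other terms $\exp(\omega_p^k z)$ have real part $u_k(z)$ with $\cos(2k\pi/p) < 1$ for $k = 1, \ldots, p-1$, a direct estimate (of the same flavour as those in Theorem \ref{zeros} and Lemma \ref{Ym}, using Lemma \ref{rexp}) shows that $|(f - \exp)(z)|$ is small compared with $|e^z| = e^c$ once $c$ is large enough, uniformly in $y$ on the segment. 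In fact I would record the quantity $\rho_{m,c} := \max_{z \in S_{m,c}^4} |(f - \exp)(z)|$ and note that $\rho_{m,c}/e^c \to 0$ as $c \to \infty$ (for fixed $m$, or uniformly over a suitable range of $m$), so in particular $\rho_{m,c} < e^c$ for large $c$.

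First I would describe $\exp(S_{m,c}^4)$ exactly: as $y$ runs over an interval of length $2\pi$, $e^z = e^c e^{iy}$ traces out the full circle of radius $e^c$ centred at the origin (the endpoints of the segment both map to $-e^c$, since $(2m\pm1)\pi$ are odd multiples of $\pi$). Then $f(S_{m,c}^4) = \exp(S_{m,c}^4) + (f-\exp)(S_{m,c}^4)$ is a perturbation of this circle by a curve of diameter at most $2\rho_{m,c}$, hence $f(S_{m,c}^4)$ lies in the annulus $A_{m,c} = \{ z : \big| |z| - e^c \big| \le \rho_{m,c} \}$; this is immediate from the triangle inequality. Because $\rho_{m,c} < e^c$, this annulus does not contain the origin, so $f(S_{m,c}^4)$ is a closed curve winding once around the origin. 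Intersecting with the right half-plane $\{\operatorname{Re} z \ge 0\}$: the full circle $e^c e^{iy}$ crosses into $\operatorname{Re} \ge 0$ exactly for $y$ near $2m\pi$ (i.e. $|y - 2m\pi| \le \pi/2$), passing through $\pm i e^c$ on the imaginary axis; since the perturbation has size $o(e^c)$, the perturbed curve still crosses the positive and negative imaginary axes (an intermediate-value/winding argument on $\operatorname{Re} f(z)$ and $\operatorname{Im} f(z)$ along the segment), and the portion of $f(S_{m,c}^4)$ in the closed right half-plane is a single arc joining a point on the positive imaginary axis to a point on the negative imaginary axis. One should check $\operatorname{Re} f$ changes sign exactly once on each half of the segment so that the right-half-plane portion is genuinely a single sub-arc, not several.

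The main obstacle is the uniformity and bookkeeping of the estimate $\rho_{m,c} = o(e^c)$: one must control all $p-1$ subdominant terms $\exp(\omega_p^k z)$ for $z = c + iy$ with $y \in [(2m-1)\pi, (2m+1)\pi]$, and here $y$ is comparable to $m$, so $u_k(z) = c\cos(2k\pi/p) - y\sin(2k\pi/p)$ can be \emph{positive and large} for those $k$ with $\sin(2k\pi/p) < 0$ (roughly $k > p/2$), unless $c$ is taken large relative to $m$. So the honest statement is "for each fixed $m$, for all large enough $c$", or one restricts to $c$ growing suitably with $m$; I would make the dependence explicit, invoking Lemma \ref{rexp} with $x = c$, the appropriate $a = \cos(2k\pi/p) < 1$, and a constant $d$ absorbing the $e^{-y\sin(2k\pi/p)}$ factors, exactly as in the earlier lemmas. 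The rest of the argument — the annulus containment, the winding number, and the intersection with the imaginary axis — is then routine once that bound is in hand, and uses only the triangle inequality and the intermediate value theorem applied to $\operatorname{Re} f$ and $\operatorname{Im} f$ along $S_{m,c}^4$.
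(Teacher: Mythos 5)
Your proposal is correct and follows essentially the same route as the paper: write $f=\exp+(f-\exp)$ on $S_{m,c}^4$, bound $\max_{z\in S_{m,c}^4}|(f-\exp)(z)|$ by the dominant subdominant term $\exp(u_{p-1}(z))$ so that it is $o(e^c)$ as $c\to\infty$, and conclude by the triangle inequality that the image is a perturbation of the circle of radius $e^c$ lying in $A_{m,c}$ and meeting both halves of the imaginary axis. Your explicit remark about the $m$-dependence of "large enough $c$" is consistent with the paper, whose bound carries the factor $\exp((2m+1)\pi\sin(2\pi/p))$ and which later applies the lemma only to finitely many trapeziums at once.
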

\begin{proof}The image of $S_{m,c}^4$ under $e^z$ is the circle around the origin with radius $e^c$. Additionally, by Lemma \ref{symmetry} and  \eqref{eq:egine},
\begin{align*}
\max_{z \in S_{m,c}^4} |(f-\exp)(z)| &=\max_{z \in S_{m,c}^4} \sum_{k=1}^{p-1} \exp ( u_k(z))
\\&\leq (p-1) \max_{z \in S_{m,c}^4} \exp( u_{p-1}(z)) 
\\&=  (p-1) \max_{z \in S_{m,c}^4}  \exp \left(  c \cos \left( \frac{ 2 \pi}{p} \right) + y \sin  \left( \frac{ 2 \pi}{p} \right)  \right)
\\&=  (p-1) \exp\left( (2m+1) \pi  \sin  \left( \frac{ 2 \pi}{p} \right)  \right)  \exp \left(  c \cos \left( \frac{ 2 \pi}{p} \right) \right).
\end{align*}
But, since $\cos(2 \pi/p)<1$, the quantity $\max_{z \in S_{m,c}^4} |(f-\exp)(z)|$ is small compared to $e^c$, since
\begin{align*}
\frac{\exp (c)}{\exp(c \cos (2\pi/p))} = \exp( c(1-\cos(2\pi/p)) \to \infty, \text{ as } c \to \infty.
\end{align*}
Thus, for large enough $c$, $f(z)$ for $z \in  S_{m,c}^4$ has to lie inside a ball of radius $\max_{z \in S_{m,c}^4}  |(f-\exp)(z)|$ around a point on $\{z:|z|=e^c\}$. 
For large enough $c$, then, $f(S_{m,c}^4)$ is a curve inside an annulus that meets both the positive and negative imaginary axes, and joins up the rest of the image of $\partial T_{m,c}$ under $f$ in the left half-plane.
\end{proof}

We can now define inverse branches of $f$ inside the trapeziums $T_{m,c}$, using the following lemma.

\begin{lemma}
There exists $c>0$ such that the inverse branch of $f$ in $T_{m,c}$ is well defined and analytic for large enough $m$.
\end{lemma}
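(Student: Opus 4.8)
The plan is to show that the image of the boundary $\partial T_{m,c}$ under $f$ is a Jordan curve that winds exactly once around every point of $T_{m,c}$ (and of a slightly larger region), which by the argument principle gives that $f: T_{m,c} \to f(T_{m,c})$ is a proper degree-one map, hence a conformal bijection onto its image, and its inverse is the desired analytic branch. The key ingredients have already been assembled in the preceding lemmas: by Theorem \ref{zeros}, for $m$ large, $f(S_{m,c}^1)$ is a bounded subarc of the real axis through the origin; by Lemma \ref{Ym}, $f(S_{m,c}^2)$ and $f(S_{m,c}^3)$ lie in the left half-plane (for $m > M$); and by Lemma \ref{trap}, $f(S_{m,c}^4)$ is, for large $c$, a curve in the annulus $A_{m,c}$ around $\{|z| = e^c\}$ that crosses both the positive and negative imaginary axes and connects to the images of the other three sides in the left half-plane.

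First I would fix $m$ large enough for Lemma \ref{Ym} and Theorem \ref{zeros} to apply, and $c$ large enough for Lemma \ref{trap}; I would also take $c$ large compared to the modulus of the critical values of $f$ that could possibly lie in the relevant range, using Theorem \ref{cpsss} (the critical values on $\pra$ are controlled, so a large-radius circle $\{|w| = e^{c}/2\}$, say, encloses none of the obstructing critical values and contains the bounded real arc $f(S_{m,c}^1)$). Next I would trace $f(\partial T_{m,c})$: starting on the real axis near the origin (image of $S^1$), moving out along $f(S^2)$ into the left half-plane, then around via $f(S^4)$ through the right half-plane where it nearly follows the large circle $\{|w| = e^c\}$ (by Lemma \ref{trap} it stays within a comparatively tiny annulus, so it genuinely ``goes around''), then back through the left half-plane along $f(S^3)$ and returning to the real axis. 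The point is that this concatenation is a closed curve whose winding number about any fixed $w_0$ with $|w_0| < e^c - \max_{S^4}|(f-\exp)|$ and $\operatorname{Re} w_0$ not too negative — in particular about every point of a domain $U \supset T_{m,c}$ that we can arrange to be simply connected — equals $1$, because the only ``large loop'' contribution comes from the $f(S^4)$ arc circling the origin once, while the other three pieces stay in the left half-plane (or on the bounded real arc) and contribute nothing net.

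Granting the winding-number computation, the argument principle says $f$ takes every value $w_0 \in U$ exactly once in the interior of $T_{m,c}$ (counting multiplicity); since the count is $1$, there are no critical points of $f$ in the interior either, so $f$ restricted to a neighbourhood of $T_{m,c}$ is locally injective and globally injective onto $U$. Hence the inverse branch $f^{-1}: U \to T_{m,c}$ exists, is single-valued, and is analytic by the inverse function theorem (equivalently: a bijective holomorphic map has holomorphic inverse). The statement as given — that ``the inverse branch of $f$ in $T_{m,c}$ is well defined and analytic for large enough $m$'' — then follows with this choice of $c$.

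The main obstacle I expect is making the winding-number / Jordan-curve argument fully rigorous, because $f(\partial T_{m,c})$ is only described piecewise and qualitatively by the earlier lemmas: one must verify that the four image arcs actually concatenate into a \emph{simple} closed curve (no spurious self-intersections), and pin down precisely which region $U$ it encircles — in particular that $U$ can be taken to contain $T_{m,c}$ itself, which is what is needed for a genuine inverse branch \emph{on} $T_{m,c}$ rather than on some other set. Handling the corner points of the trapezium (where two sides meet, and where $f$ might a priori fold the image back on itself) and confirming that $f(S^1)$, a degenerate ``back-and-forth'' real segment through $0$, does not disrupt the winding count, are the delicate technical points; the cleanest route is probably to replace $\partial T_{m,c}$ by a slightly perturbed curve avoiding the real axis on the $S^1$ side, compute the winding number there, and then pass to the limit.
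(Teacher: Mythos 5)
Your proposal is correct and is essentially the paper's argument: the paper also deduces from Lemmas \ref{Ym} and \ref{trap} (together with the description of $f(S_{m,c}^1)$) that the image of $\partial T_{m,c}$ winds once around the relevant region in the right half-plane inside $B(0,r(m,c))$, phrasing the count via Rouché's theorem by comparison with $\exp$ rather than via your direct winding-number/argument-principle computation, and concludes that those points are covered exactly once, so the inverse branch is well defined and analytic. The paper likewise fixes only finitely many trapeziums ($|j|$ between $M$ and $M+K-1$) and then chooses $c=c_0$ large for all of them, which is how it handles the dependence of the estimates on $m$ that you also noted.
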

\begin{proof}
Let $r(m,c)$ denote the radius of the inner boundary curve of the annulus $A(m,c)$ that was defined in Lemma \ref{trap}; that is,
\begin{align*}
r(m,c) : =e^c-\max_{z \in S_{m,c}^4}  |(f-\exp)(z)|.
\end{align*}
 Fix $K \in \fis$. Choose $c_0>1$ such that $T_{i,c}$ and $T_{j,c}$ exist for $c \geq c_0$, with $T_{i,c} \subset f(T_{j,c})$ for all $i,j \in \{ M, \ldots, M+K-1, -M, \ldots, -M-K+1 \}$; Lemmas~\ref{Ym} and \ref{trap} guarantee that such a $c_0>1$ exists. The image curve $f(S_{m,c}^4)$ goes round $A(m,c)$, so it surrounds $\{z:\operatorname{Re}z>0\} \cap B(0,r(m,c))$. We can thus deduce from Rouché's theorem that the points inside $\{ z : \operatorname{Re}z > 0\} \cap B(0, r(c,m))$ are covered by the image of $T_{m,c}$ under $f$ as many times as they would be covered under $\exp$; that is, once. Hence, the inverse of $f$ in $T_{m,c}$ is well defined.
\end{proof}

The values $M$ and $K$ will remain fixed from now on, and for the following we assume that $c=c_0$ and write $T_{M+j,c}=T_{j+1}$, $T_{-M-j,c}=T_{-j-1}$ for $j \in \{ 0, \ldots, K-1\}$ and $S_{m,c}^i=S_m^i$. Let $L_j$ be the branch of the inverse of $f$ on $\{ z \in \mig: \operatorname{Re}z > 0\} \cap B(0, r(c,m))$ that takes values in $T_j$.

 Let $T^K = \cup_{ 1 \leq |j| \leq K} T_{j}$ and let $\Lambda_{K}$ be the set of points whose orbits remain for all time in $T^K$ (see Figure 1). Recall that the trapeziums are closed sets.

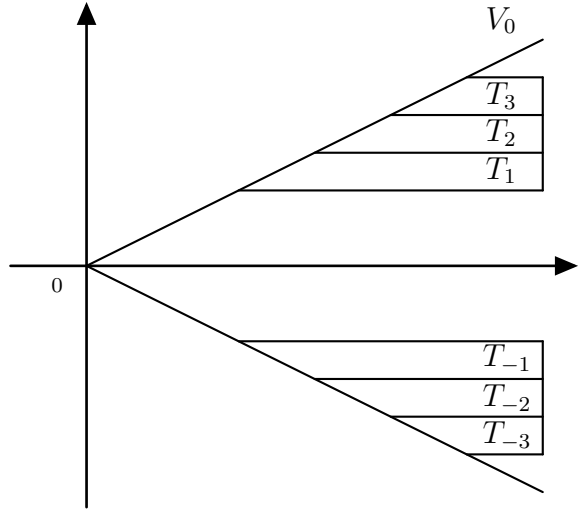
\begin{figure}[h]
\centering
\begin{subfigure}[b]{.8\textwidth}
\begin{tikzpicture}[line cap=round,line join=round,>=triangle 45,x=1cm,y=1cm]
\clip(-2.8045585111180515,-5.193966821972128) rectangle (13.541482789665318,5.290615224489944);
\draw [line width=0.8pt] (6,2)-- (4,2);
\draw [line width=0.8pt] (6,2.5)-- (5,2.5);
\draw [line width=0.8pt] (6,1.5)-- (3,1.5);
\draw [line width=0.8pt] (6,2.5)-- (6,1);
\draw [line width=0.8pt] (6,-2)-- (4,-2);
\draw [line width=0.8pt] (6,-1.5)-- (3,-1.5);
\draw [line width=0.8pt] (5,-2.5)-- (6,-2.5);
\draw [line width=0.8pt] (6,-2.5)-- (6,-1);
\draw (5.107035552813294,1.5893651319724926) node[anchor=north west] {$T_1$};
\draw (5.107035552813294,2.0846997168447166) node[anchor=north west] {$T_2$};
\draw (5.107035552813294,2.5800343017169403) node[anchor=north west] {$T_3$};
\draw (5.107035552813294,3.5800343017169403) node[anchor=north west] {$V_0$};
\draw (5.0657576707406085,-0.89835484983643985) node[anchor=north west] {$T_{-1}$};
\draw (5.0657576707406085,-1.4239202593335363) node[anchor=north west] {$T_{-2}$};
\draw (5.0657576707406085,-1.9192548442057602) node[anchor=north west] {$T_{-3}$};
\draw [line width=0.8pt] (0,0)-- (6,-3);
\draw [line width=0.8pt] (0,0)-- (6,3);
\draw [line width=0.8pt] (6,-1)-- (2,-1);
\draw [line width=0.8pt] (6,1)-- (2,1);
\draw [->,line width=1pt] (0,0) -- (6.464936032454278,0);
\draw [line width=1pt] (0,0)-- (-1,0);
\draw [->,line width=1pt] (0,0) -- (0,3.5);
\draw [line width=1pt] (0,0)-- (0,-3.2);
\begin{scriptsize}
\draw(-0.38759770250294356,-0.26159325863444793) node {$0$};
\end{scriptsize}
\end{tikzpicture}
\end{subfigure}\caption{The set $T^K$ for $K=3$.}
\end{figure}

\begin{theorem} \label{lj}
The set $\Lambda_K$ is homeomorphic to $\Sigma_K$ and $f_{|\Lambda_K}$ is conjugate to the shift map on $\Sigma_K$.
\end{theorem}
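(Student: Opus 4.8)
The plan is to realise $\Lambda_K$ as the attractor of the system of inverse branches $\{ L_j : 1 \le |j| \le K \}$ and to use the itinerary map as the conjugating homeomorphism, following the scheme of \cite{dev3}.

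First I would check that the itinerary is well defined on $\Lambda_K$. Combining Lemma \ref{Ym} for the sides $S_m^2, S_m^3$, the fact (Theorem \ref{zeros}) that $f$ maps $S_m^1$ into a bounded subset of the real axis, and Lemma \ref{trap} for $S_m^4$ (whose image has modulus close to $e^c$), one sees that $f(\partial T_j) \cap T^K = \emptyset$ for each $j$ with $1 \le |j| \le K$, once $c$ is large: the first two images lie in the left half-plane, the third on the real axis (which misses $T^K$, all of whose points satisfy $|\operatorname{Im} z| \ge \pi$), and the fourth has modulus exceeding $\max_{z \in T^K} |z|$. Hence no point of $\Lambda_K$ can lie on any $\partial T_j$, so $\Lambda_K$ is contained in the union of the pairwise disjoint interiors of the $T_j$, and $S(z) = s_0 s_1 s_2 \cdots$ with $f^n(z) \in T_{s_n}$ defines a map $S : \Lambda_K \to \Sigma_K$; the relation $S \circ f = \sigma \circ S$ is immediate from the definition.

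The one genuinely new point compared with \cite{dev3} is that the inverse branches must be shown to contract, and since each $T_j$ meets $V_0$ --- where Lemma \ref{davel} gives no control of $|f'|$ --- this cannot simply be read off from a derivative bound. Instead I would fix $c = c_0$ so large that $T^K$ is relatively compact in the common domain $\Omega := \{ z : \operatorname{Re} z > 0 \} \cap B(0, r)$ of the branches $L_j$, where $r$ is the least of the radii $r(c_0, m_j)$ furnished by the inverse-branch lemma: every $z \in T^K$ has $\operatorname{Re} z \ge c_0 > 0$, while $|z|$ is bounded by a constant depending only on the fixed integers $M$ and $K$, hence is dwarfed by $r \approx e^{c_0}$. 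Since each $L_j$ is univalent on $\Omega$ with $L_j(\Omega) \subset T_j \Subset \Omega$, the Schwarz--Pick lemma yields a fixed $\rho < 1$ such that $L_j$ contracts the hyperbolic metric of $\Omega$ by the factor $\rho$; as the $T_j$ are relatively compact in $\Omega$, it follows that $\operatorname{diam}\bigl( L_{s_0} \circ \cdots \circ L_{s_{n-1}}(T^K) \bigr) \to 0$ geometrically, uniformly over all sequences $(s_j)$.

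Finally I would assemble the conjugacy. For injectivity, if $S(z) = S(w) = (s_n)$ then, unwinding through the branches $L_{s_k}$ and using $f^n(z), f^n(w) \in T_{s_n} \subset T^K$, both $z$ and $w$ lie in $L_{s_0} \circ \cdots \circ L_{s_{n-1}}(T^K)$ for every $n$, so $z = w$. For surjectivity, given any $(s_n) \in \Sigma_K$ the sets $E_n := L_{s_0} \circ \cdots \circ L_{s_{n-1}}(T^K)$ are nonempty, compact and nested (since $L_{s_n}(T^K) \subset T_{s_n} \subset T^K$), with diameters tending to $0$, so $\bigcap_n E_n = \{ z \}$, and a short induction using $f \circ L_{s_0} = \operatorname{id}$ on $\Omega$ shows $f^n(z) \in T_{s_n}$ for all $n$, i.e.\ $z \in \Lambda_K$ with $S(z) = (s_n)$. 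Continuity of $S^{-1}$ holds because two sequences agreeing on their first $N$ entries have $S^{-1}$-images in the single set $E_N$, of small diameter; and since $\Sigma_K$ is compact and $\Lambda_K \subset \mig$ is Hausdorff, the continuous bijection $S^{-1}$ is a homeomorphism, hence so is $S$. Together with $S \circ f = \sigma \circ S$ this proves the theorem. I expect the main obstacle to be the contraction step --- controlling the inverse branches near $V_0$, where the derivative estimates of Section \ref{sec:pre} are unavailable; the remaining parts are the standard symbolic-dynamics bookkeeping.
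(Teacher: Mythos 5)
Your proposal is correct and follows essentially the same route as the paper: both realise the points of $\Lambda_K$ as limits of compositions of the inverse branches $L_j$, get the shrinking of the nested images from a uniform strict contraction of the Poincar\'e metric (the paper on a simply connected region $G\subset f(\operatorname{int} T_j)$ containing $T^K$, you on the half-disk domain $\Omega$ of the branches with $T_j\Subset\Omega$), and then take the itinerary/limit map as the conjugating homeomorphism, with your boundary check and explicit bijectivity/continuity argument just filling in details the paper delegates to a standard reference. Only a cosmetic slip: points of $T^K$ satisfy $0<\operatorname{Re} z\leq c_0$ rather than $\operatorname{Re} z\geq c_0$, but the property you actually need, $T^K\Subset\Omega$, still holds since $|z|$ is bounded on $T^K$ by a constant that is dwarfed by $r\approx e^{c_0}$.
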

\begin{proof}
Let $s = s_0 s_1 s_2 \ldots \in \Sigma_K$ and define
\begin{equation}
L_s^n (z)= L_{s_0} \circ \cdots \circ L_{s_{n-1}}(z), \quad \text{ for } z \in T^K. \label{eq:asteraki}
\end{equation}
We claim that, for $z \in T^K$,
\begin{align*}
\lim_{n \to \infty} L_s^n (z)
\end{align*}
exists and is independent of $z$. 

For any $1\leq |j| \leq K$ consider the image set $f(\operatorname{int} T_j )$;  from Lemmas \ref{Ym} and \ref{trap} and the remarks before them it follows that this image set will lie inside $B(0, r(c,m))$ and cover $T^K$. This allows us to choose a simply connected region $G$ inside $f(\operatorname{int} T_j )$, for any $1\leq |j| \leq K$, so that $T^K  \subset G$. Thus, for each $j$, there exists an open connected subset of $T_j$, say $T^{'}_j$, which maps univalently onto $G$ under $f$. Then the inverse branch $L_{s_j}$ maps $T^{'}_j$ strictly inside itself and so each $L_{s_j}$ is a strict contraction of the Poincaré metric on $G$. In particular, each $L_{s_j}$ is a uniformly strict contraction of the Poincaré metric on $T^{K{'}}=  \cup_{ 1 \leq |j| \leq K} T^{'}_{j}$ (since there are only finitely many $L_{s_j}$). Therefore, the sets $L^n_s(T^{K{'}})$ are nested and their diameters decrease to $0$ as $n \to \infty$. So $\lim_{n \to \infty} L_s^n (z)$ exists and is independent of $z$.

We can thus define $\Phi(s) = \lim_{n \to \infty} L_s^n (z)$ for all $z \in T^{K{'}}$. A standard argument (see, for example, \cite[Theorem 9.9]{blanchard}) then shows that $\Phi$ is a homeomorphism, which gives the conjugacy between $f$ and the shift map.
\end{proof}

For each $s \in \Sigma_k$, let $z(s)$ be the unique point in $\Lambda_K$ whose itinerary is $s$.

\begin{cor} \label{ela} Let $s = \overline{s_0 s_1 \cdots s_{n-1}}$ be a repeating sequence in $\Sigma_K$. Then $z(s)$ is a repelling periodic point of $f$ with period $n$.
\end{cor}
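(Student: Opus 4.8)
The plan is to exploit the conjugacy established in Theorem \ref{lj}: since $f_{|\Lambda_K}$ is conjugate to the shift $\sigma$ on $\Sigma_K$ via the homeomorphism $\Phi$, and a repeating sequence $s = \overline{s_0 s_1 \cdots s_{n-1}}$ is a period-$n$ point of $\sigma$, the point $z(s) = \Phi(s)$ is automatically a period-$n$ point of $f$. So the only real content of the corollary is that this periodic point is \emph{repelling}, i.e. that $|(f^n)'(z(s))| > 1$. (One should also check the period is exactly $n$ rather than a proper divisor; this can either be folded into the statement as ``$n$ is the minimal period of $s$'' or verified directly from the fact that the $R(k)$, and hence the $T_j$, are disjoint, so the itinerary determines which $T_j$ the orbit visits at each step.)

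To show $z(s)$ is repelling, I would use the chain rule: writing $z_i = f^i(z(s))$ for $i = 0, \ldots, n-1$, we have $(f^n)'(z(s)) = \prod_{i=0}^{n-1} f'(z_i)$. Each $z_i$ lies in some trapezium $T_{m_i, c} \subset Q_0$ (or a rotation thereof), and in particular $z_i \in \mig \setminus P(\nu)$. The cleanest route is to invoke the derivative estimate already available: the trapeziums $T_{m,c}$ were chosen with $c > \tau/\sin(\pi/p)$ large, so they lie outside $P(\nu)$ and, crucially, the inverse branches $L_j$ constructed in the preceding lemmas are strict contractions of the Poincaré metric on the relevant simply connected domains (this is exactly what the proof of Theorem \ref{lj} uses). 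Dualizing, the forward map $f$ is a strict \emph{expansion} of the Poincaré metric from $T'_j$ onto $G \supset T^K$; iterating $n$ times and using the chain rule for the hyperbolic derivative gives that $f^n$ expands the Poincaré metric on a neighbourhood of $z(s)$ by a definite factor, which forces $|(f^n)'(z(s))| > 1$ once one compares the hyperbolic metric with the Euclidean one on the fixed compact region $T^K$. Alternatively, and perhaps more elementarily, one can argue that $|f'(z)| > 2$ on each $T_j$ (these trapeziums lie in the region where $f$ behaves like a single exponential, cf. \eqref{eq:daveleq1} in Lemma \ref{davel}, since $c$ is taken large), whence $|(f^n)'(z(s))| = \prod_{i=0}^{n-1} |f'(z_i)| > 2^n > 1$ directly.

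The main obstacle is a bookkeeping one rather than a deep one: making sure the trapeziums $T_j$ on which the orbit of $z(s)$ lives are genuinely contained in the region $T(\nu)$ where Lemma \ref{davel} applies (so that \eqref{eq:daveleq1} is available), or else supplying an independent derivative lower bound on the $T_j$. Since the $T_{m,c}$ have $x$-coordinate at least $c$ with $c$ chosen large in the preceding lemmas, and since $T(\nu)$ consists of the complement of $P(\nu)$ together with the strips $Q_k$ of fixed width $2\tau$, for $m$ in the fixed finite range $\{M, \ldots, M+K-1\}$ and $c$ large enough the trapeziums do sit inside $Q_0$ close to the ray $V_0$; one must either note that $Q_0 \setminus P(\nu)$ still satisfies an exponential-type derivative bound (because near $V_0$ the two dominant terms $e^z$ and $e^{\omega_p^{p-1}z}$ already make $|f'|$ large, as in the proof of Theorem \ref{zeros}), or restrict attention to the sub-trapeziums $T'_j \subset T_j$ and use the Poincaré-metric contraction that Theorem \ref{lj} already guarantees. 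Either way the conclusion $|(f^n)'(z(s))| > 1$ follows, completing the proof that $z(s)$ is a repelling periodic point of period $n$.
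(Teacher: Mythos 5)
Your first route --- inverse branches as strict contractions of the Poincar\'e metric --- is exactly the paper's argument. The paper's proof notes that $L^n_s$ is analytic, maps $T_{s_0}$ into its own interior, and is therefore a strict Poincar\'e contraction, so it has a unique fixed point in $T_{s_0}$ which is attracting for $L^n_s$, hence repelling for $f^n$, and which must be $z(s)$ because it has itinerary $s$; your ``dualized'' expansion formulation is the same argument (and in fact you need no comparison with the Euclidean metric on a compact set: at a fixed point the hyperbolic density factors cancel, so hyperbolic contraction of $L^n_s$ already gives $|(L^n_s)'(z(s))|<1$, i.e. $|(f^n)'(z(s))|>1$). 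Your use of the conjugacy from Theorem \ref{lj} to get periodicity, where the paper gets existence, periodicity and identification with $z(s)$ in one stroke from the fixed point of $L^n_s$, is a cosmetic difference.

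Your ``more elementary'' alternative, however, has a genuine gap, and the repair you sketch for it is wrong. The bound $|f'(z)|>2$ of \eqref{eq:daveleq1} is only available on $T(\nu)$, and the trapeziums $T_{m,c}$ are \emph{not} contained in $T(\nu)$ no matter how large $c$ is taken: the side $S^1_{m,c}$ always lies on the ray $V_0\subset Q_0$. By Theorem \ref{3.2} and Theorem \ref{cpsss}, $f$ has zeros and critical points on $V_0$ of arbitrarily large modulus --- indeed $f$ vanishes twice on $S^1_{m,c}$, so by Rolle's theorem (applied to the real function that $f$ restricts to on $V_0$) there is a critical point on $S^1_{m,c}$ itself, i.e. on the boundary of the closed trapezium. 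Hence no uniform lower bound on $|f'|$ holds on $T_j$, and your suggested fix --- that near $V_0$ the two dominant terms $e^z$ and $e^{\omega_p^{p-1}z}$ keep $|f'|$ large --- fails precisely because in $f'$ these two terms cancel at the critical points on $V_0$. Since a priori one only knows $z(s)$ lies in the interior of $T_{s_0}$, possibly close to $V_0$, the chain-rule shortcut $|(f^n)'(z(s))|>2^n$ cannot be justified this way; the Poincar\'e-metric contraction of the inverse branches (your first route, equivalently the paper's) is what actually carries the proof.
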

\begin{proof} 
The map $L_s^n$ is a composition of analytic maps and therefore analytic itself. Also, $L^n_s (T_{s_0})$ is contained in the interior of $T_{s_0}$. Since $L^n_s$ is a strict contraction of the Poincaré metric on $T_{s_0}$, it follows that $T_{s_0}$ has a unique fixed point in this trapezium and that this fixed point is attracting for $L^n_s$; thus repelling for $f$. Since this point has itinerary $s$ for $f$, it must be $z(s)$.
\end{proof}

\begin{cor} Let $s \in \Sigma_K$. Then $z(s) \in J(f)$. \label{jcor}
\end{cor}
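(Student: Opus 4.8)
The plan is to show that the periodic points $z(s)$ with $s$ a repeating sequence are dense in $\Lambda_K$, and then use the fact that $J(f)$ is closed together with the characterization of $J(f)$ as the closure of the repelling periodic points. First I would recall from the theory of transcendental entire functions (the Baker result that $J(f)$ equals the closure of the repelling periodic cycles) that every repelling periodic point lies in $J(f)$; by Corollary \ref{ela}, $z(s) \in J(f)$ whenever $s = \overline{s_0 \cdots s_{n-1}}$ is periodic. Next I would observe that the conjugacy $\Phi : \Sigma_K \to \Lambda_K$ from Theorem \ref{lj} is a homeomorphism, and that the periodic sequences are dense in $\Sigma_K$ (a standard fact about the shift, already noted in Section \ref{sec:pre} for $\Sigma_N$). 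Hence, given an arbitrary $s \in \Sigma_K$, there is a sequence of periodic $s^{(k)} \to s$ in $\Sigma_K$, so $z(s^{(k)}) = \Phi(s^{(k)}) \to \Phi(s) = z(s)$; since each $z(s^{(k)}) \in J(f)$ and $J(f)$ is closed, we conclude $z(s) \in J(f)$.

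Alternatively — and this may be cleaner and avoids invoking the density of repelling periodic points — I would argue directly that no point of $\Lambda_K$ can lie in the Fatou set. Suppose $z(s) \in F(f)$ for some $s$. Every point of $\Lambda_K$ has bounded orbit (the orbit stays in the compact set $T^K$), so $z(s)$ would lie in a bounded Fatou component $U$ whose forward orbit is bounded; thus $U$ is contained in a periodic cycle of Fatou components, each of which is either a (super)attracting or parabolic basin component or a Siegel disc (a bounded wandering domain or Baker domain cannot have bounded orbit, and $f$ has no multiply connected Fatou components by Corollary \ref{coro}). In each of these cases the component would contain, or have in the closure of its cycle, a singular value of $f$; but the singular values of $f$ are exactly the critical values, which by Theorem \ref{cpsss} all lie on the real axis, while $T^K$ is disjoint from a neighbourhood of the real axis (the trapeziums $T_{\pm m, c}$ lie strictly above $y = \pi$ or below $y = -\pi$). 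One still has to rule out a Siegel disc whose boundary meets $T^K$; here I would use that on $\Lambda_K$ the map is conjugate to the shift, which is genuinely chaotic, whereas on a Siegel disc $f$ is conjugate to an irrational rotation, so an invariant subset on which $f$ is topologically transitive and has dense periodic points cannot sit inside $\overline{U}$.

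I expect the main obstacle to be handling this second approach rigorously, since it requires a careful case analysis of the possible periodic Fatou components and the location of singular values; the first approach is shorter provided one is willing to cite Baker's theorem that $J(f)$ is the closure of the repelling periodic points of $f$. Given the style of the paper, I would present the first approach: state that $z(s)$ for periodic $s$ is a repelling periodic point hence in $J(f)$ by Corollary \ref{ela} and Baker's theorem, note that periodic sequences are dense in $\Sigma_K$, transport this through the homeomorphism $\Phi$, and close using that $J(f)$ is closed. The only point needing a word of care is confirming that the convergence $z(s^{(k)}) \to z(s)$ is genuine, which is immediate from continuity of $\Phi$ established in the proof of Theorem \ref{lj}.

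\begin{proof}
By Baker's theorem, $J(f)$ is the closure of the set of repelling periodic points of $f$; in particular every repelling periodic point lies in $J(f)$. Hence, by Corollary \ref{ela}, $z(s) \in J(f)$ for every repeating sequence $s \in \Sigma_K$. Now let $s \in \Sigma_K$ be arbitrary. The repeating sequences are dense in $\Sigma_K$, so there is a sequence of repeating sequences $s^{(k)} \to s$ in $\Sigma_K$. By the continuity of the homeomorphism $\Phi$ from Theorem \ref{lj}, we have
\begin{align*}
z(s^{(k)}) = \Phi(s^{(k)}) \to \Phi(s) = z(s) \quad \text{as } k \to \infty.
\end{align*}
Each $z(s^{(k)})$ lies in $J(f)$, and $J(f)$ is closed, so $z(s) \in J(f)$.
\end{proof}
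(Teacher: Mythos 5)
Your first argument is exactly the paper's proof: Corollary \ref{ela} plus Baker's theorem for repeating sequences, then density of periodic sequences in $\Sigma_K$ transported through the homeomorphism $\Phi$, and closedness of $J(f)$. The proposal is correct and matches the paper's approach, with the details spelled out slightly more explicitly.
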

\begin{proof} 
From Corollary \ref{ela} it follows that $z(s)$ is a limit of repelling periodic points given by the conjugacy with the shift map. By a result of Baker \cite{baker2}, $J(f)$ is the closure of the set of repelling periodic points of $f$; hence $z(s) \in J(f)$.
\end{proof}

We note that $z(s)$ is not the only point inside the strip $R(s_0)$ that has itinerary $s$: in fact, there are infinitely many points in this strip that share the itinerary $s$ and form a curve, as we will show in the next section.

To end this section, we prove an additional property of the points $z(s)$, $s \in \Sigma_K$; some of them will always exist in $R(m) \cap Q_0$ and $R(-m) \cap Q_1$ for large enough positive $m$.

\begin{lemma} \label{curveswd}
Let $f \in \mathcal{F}$. For $m \geq M$, there exist $s, s' \in \Sigma_K$, such that $z(s) \in R(m) \cap Q_0$ and $z(s') \in R(-m) \cap Q_1$.
\end{lemma}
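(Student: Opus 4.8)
The plan is to exhibit, for each large $m$, a specific finite word in $\Sigma_K$ whose corresponding point $z(s)$ is forced to lie in $R(m)\cap Q_0$ (the argument for $R(-m)\cap Q_1$ being identical by the symmetry of Lemma \ref{symmetry}, which interchanges the relevant sets). Recall that the trapeziums $T_1,\dots,T_K$ (i.e.\ $T_{M+j,c}$ for $j=0,\dots,K-1$) are, by construction, contained in the strips $R(M),\dots,R(M+K-1)$, which in turn lie inside $Q_0$ for $c>\tau/\sin(\pi/p)$ large enough. So it suffices to show that every index in $\{M,\dots,M+K-1\}$ actually occurs as $s_0$ for some $s\in\Sigma_K$, and more specifically that for each such index $m$ there is an admissible $s\in\Sigma_K$ with $s_0=m$. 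But by Theorem \ref{lj} the coding map $\Phi:\Sigma_K\to\Lambda_K$ is a homeomorphism and \emph{every} sequence in $\Sigma_K$ is admissible; in particular, the constant sequence $s=\overline{m}$ lies in $\Sigma_K$ whenever $|m|\le K$, and then $z(\overline m)$ is (by Corollary \ref{ela}) a repelling fixed point of $f$ lying in $T_{M+?,c}$ — wait, one must be careful: $\Sigma_K$ indexes the trapeziums as $T_{\pm1},\dots,T_{\pm K}$, which correspond to the strips $R(M),\dots,R(M+K-1)$, not to $R(1),\dots,R(K)$.

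So the cleaner way to state it: fix $m\ge M$. Apply the whole machinery of Section \ref{sec:trapeziums} with the block of $K$ strips centred so that $R(m)$ is among $R(M),\dots,R(M+K-1)$ — concretely, run the construction with the integer $M$ replaced by $m$ itself (Lemmas \ref{Ym} and \ref{trap} hold for all sufficiently large $m$, so they certainly hold for this $m$ and for $K=1$, say). Then $T^K$ with $K=1$ is just $T_{m,c}\cup T_{-m,c}$ and $\Lambda_1\cap T_{m,c}$ is a single repelling fixed point $z$ of $f$ with itinerary the constant sequence $\overline{m}$; since $T_{m,c}\subset R(m)\cap Q_0$ by the definition of the trapezium (its slanted side lies along $V_0$, and $Q_0$ is the rotated strip of width $2\tau$ about $V_0$, which contains $T_{m,c}$ once $c$ is large), we are done. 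One then only needs to check that the sequence $s$ obtained this way is genuinely a member of the originally fixed $\Sigma_K$ (with the originally fixed $K$): this is immediate since a one-sided sequence of integers all equal to $m$ lies in $\Sigma_N$ as soon as $N\ge|m|$, and we are free to take $K\ge m$, or — better — simply observe that the statement's $\Sigma_K$ may be taken large enough at the outset, or that the claim is about existence of \emph{some} $s\in\Sigma_K$ and the index of the strip is what matters, not its size.

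Concretely I would organise the proof as: (i) recall that for $c$ large the trapezium $T_{m,c}$ is contained in $R(m)\cap Q_0$, which follows directly from the defining inequalities of $T_{m,c}$ (bounded by $y=\tan(\pi/p)x$, $y=(2m\pm1)\pi$, $x=c$) and of $Q_0$ and $R(m)$; (ii) invoke Lemmas \ref{Ym}, \ref{trap} and the two subsequent lemmas of the section to get, for all large $m$, a well-defined analytic inverse branch $L$ of $f$ from a simply connected region containing $T_{m,c}$ into $T_{m,c}$, which is a strict contraction of the Poincaré metric and hence has a unique fixed point $z(s)$ in $T_{m,c}$ with constant itinerary $s=\overline m$ (this is exactly Corollary \ref{ela} with $n=1$); (iii) conclude $z(s)\in R(m)\cap Q_0$, and apply Lemma \ref{symmetry} (rotation by $2\pi/p$, which carries $V_0$ to $V_1$, $R(m)$ to $R(-m)$ in the rotated picture, and $Q_0$ to $Q_1$) to obtain $s'$ with $z(s')\in R(-m)\cap Q_1$.

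The main obstacle is purely bookkeeping: making sure the index $m$ that labels the strip $R(m)$ is correctly matched to the label of the trapezium in the $\Sigma_K$-coding, and that ``for large enough $m$'' is compatible with the fixed choices of $M$, $K$ and $c=c_0$ made earlier — since those were fixed, one should either note that the construction of Section \ref{sec:trapeziums} applies verbatim to any block of consecutive strips far enough out (so in particular to one containing $R(m)$ for each given large $m$), or re-run Lemmas \ref{Ym} and \ref{trap} with $K=1$ centred at $m$. There is no analytic difficulty: all the hard estimates were already done in Lemmas \ref{Ym} and \ref{trap}. I would also explicitly remark why $T_{m,c}\subset Q_0$: the horizontal distance from $V_0$ to the far vertical side $x=c$ of the trapezium, measured perpendicular to $V_0$, stays below $\tau/\sin(\pi/p)$ only for $x$ near $V_0$, so one must instead argue that $T_{m,c}$ fits inside $Q_0$ because $Q_0$ is a \emph{half-infinite} strip of width $2\tau$ rotated to be centred on $V_0$ and $T_{m,c}$ lies within bounded transverse distance of the segment of $V_0$ between heights $(2m-1)\pi$ and $(2m+1)\pi$ — a short geometric check that the transverse width of $T_{m,c}$ is at most $2\tau/\cos(\pi/p)$, hence at most the width of $Q_0$, once $c$ is large; this is the one place where a (routine) inequality is genuinely needed.
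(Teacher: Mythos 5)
Your step (i) — the claim that $T_{m,c}\subset R(m)\cap Q_0$ — is false, and it is the load-bearing step of your argument. The trapezium $T_{m,c}$ extends from $V_0$ all the way to the vertical line $x=c$, so its far corner $(c,(2m-1)\pi)$ lies at perpendicular distance $c\sin(\pi/p)-(2m-1)\pi\cos(\pi/p)$ from $V_0$. Since $c=c_0$ was fixed \emph{large} (in particular $c$ must exceed $(2(M+K)-1)\pi\cot(\pi/p)$ for all the trapeziums $T_{\pm M},\dots,T_{\pm(M+K-1)}$ to exist, and Lemma \ref{trap} needs $c$ large in an absolute sense), this distance is far larger than $\tau$, so most of $T_{m,c}$ lies in $T_0(\nu)$, not in the width-$2\tau$ strip $Q_0$. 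Your closing ``routine inequality'' that the transverse width of $T_{m,c}$ is at most $2\tau/\cos(\pi/p)$ is simply not true: the transverse width is of order $\bigl(c-(2m\mp1)\pi\cot(\pi/p)\bigr)\sin(\pi/p)$ and grows without bound in $c$. Consequently, knowing that $z(\overline m)$ lies in the trapezium (which is correct, via Corollary \ref{ela}) does not place it in $Q_0$, and your proof stops short of the actual content of the lemma. The missing idea — which is precisely what the paper's proof supplies — is dynamical: $z(\overline m)$ is a periodic (indeed fixed) point, so its orbit is bounded; if it lay in $T_0(\nu)$, then inequality \eqref{eq:daveleq3} of Lemma \ref{davel} would force its orbit to tend to infinity, a contradiction; hence $z(\overline m)\notin T_0(\nu)$, and since it lies far from the origin in $R(m)$ it must lie in $Q_0$ (similarly $z(\overline{-m})\in Q_1$). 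Some exclusion argument of this kind is indispensable, because the trapezium genuinely straddles $Q_0$ and $T_0(\nu)$.

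A secondary slip: your symmetry step invokes Lemma \ref{symmetry} (rotation by $2\pi/p$) to carry $R(m)\cap Q_0$ to $R(-m)\cap Q_1$, but a rotation does not map the horizontal strip $R(m)$ to the horizontal strip $R(-m)$. The relevant symmetry is reflection in the real axis (equivalently, $f(\bar z)=\overline{f(z)}$), which is how the paper sets up $T_{-m,c}$; or one simply runs the same fixed-point argument for the constant itinerary $\overline{-m}$ in $T_{-m,c}$, as the paper does implicitly. Your bookkeeping remarks about enlarging $K$ so that the symbol $m$ is admissible are fine and match the paper's own level of informality, but they do not repair the containment gap above.
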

\begin{proof}
Consider the repeating sequence $s=\overline{s_j}$ in $\Sigma_K$. Then $z(s) \in R(j)$. If $z(s)$ was in $T_0(\nu)$, from Lemma \ref{davel} we would have $f^n(z(s)) \to \infty$ as $n \to \infty$, which is a contradiction. Therefore, $z(s) \in Q_0 \cup Q_1$.
\end{proof}

\section{Hairs}\label{sec:hairs}
We continue to use the proof strategy of \cite{dev3} in this section, with our goal being to show that each point in $\Lambda_K$ (constructed in Section 4) actually lies at the endpoint of a continuous curve, all points of which share the same itinerary.

\begin{definition} \label{hairdef}
Let $s=s_0 s_1 s_2 \ldots \in \Sigma_K$. A continuous curve $h_s: [1 , \infty) \to R_{s_0}$ is called a \emph{hair} of $f$ that is attached to $z(s)$ if
\begin{enumerate}
\item $h_s ( 1) = z(s)$;
\item for each $t \geq 1$, the itinerary of $h_s(t)$ under $f$ is $s$;
\item if $t>1$, then $\lim_{n \to \infty} \operatorname{Re} f^n (h_s(t)) = \infty$;
 \item $\lim_{t \to \infty} \operatorname{Re} h_s(t) = \infty$;
\end{enumerate}
or, if it is a rotation of a curve that satisfies the above properties.
\end{definition}
In the following we continue to focus our attention on $T_{\nu} (0)$, since analogous results follow for the other angles due to symmetry. A hair attached to $z(s)$ is a continuous curve that extends from the endpoint $z (s)$ to infinity in the right half-plane. Any point $z$ on this hair that is not $z(s)$, shares the same itinerary as $z(s)$, and has an orbit that tends to infinity in the right half-plane. Further, the orbit of the endpoint $z(s)$ is bounded since $s \in \Sigma_K$.

To show that such hairs exist, we need a covering result which we can obtain as a corollary to two lemmas from the previous section.

\begin{cor} Let $H_{m,c}$ be the half-strip that extends to infinity in the right half-plane, bounded by the lines
\begin{align*}
y&=(2m-1) \pi \\
y&=(2m+1) \pi \\
x&=c,
\end{align*}
for $m \in \fis$ and $c>0$. Then, for large enough $c>0$, we have
\begin{align*}
\{z \in \mig: \operatorname{Re} z> 0 \}  \cap f(H_{m,c}) \subset \{z \in \mig: \operatorname{Re} z> 0 \}  \cap B (0, R(c))^{\mathsf{c}},
\end{align*}
where $R(c) \sim e^c$ as $c \to \infty$. \label{corinv}
\end{cor}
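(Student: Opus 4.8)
The plan is to exploit the fact that on the half-strip $H_{m,c}$ the function $f$ is a small perturbation of $\exp$, so that $|f(z)|$ is bounded below by $e^{\operatorname{Re}z}$ minus an error term that grows strictly more slowly than $e^{\operatorname{Re}z}$; combining this with the monotonicity statement of Lemma~\ref{rexp} will produce a lower bound for $|f(z)|$ on all of $H_{m,c}$ that is attained on the left edge $\{\operatorname{Re}z=c\}$ and is asymptotic to $e^c$.

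Concretely, for $z=x+iy\in H_{m,c}$ (so $x\ge c$ and $|y|<(2m+1)\pi$) I would first write, by the triangle inequality and \eqref{eq:gk},
\[
|f(z)-e^z|=\Bigl|\sum_{k=1}^{p-1}e^{\omega_p^k z}\Bigr|\le\sum_{k=1}^{p-1}e^{u_k(z)}.
\]
Since $\cos(2k\pi/p)\le\cos(2\pi/p)=:\beta<1$ for $k=1,\dots,p-1$ and $x>0$, while $|\sin(2k\pi/p)|\le 1$ and $|y|<(2m+1)\pi$, each exponent satisfies $u_k(z)=x\cos(2k\pi/p)-y\sin(2k\pi/p)\le\beta x+(2m+1)\pi$. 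Hence
\[
|f(z)|\ge e^x-(p-1)e^{(2m+1)\pi}e^{\beta x}.
\]
Now I would apply Lemma~\ref{rexp} with $d=(p-1)e^{(2m+1)\pi}$ and $a=\beta<1$: the right-hand side above, as a function of $x$, increases once $x(1-\beta)>\log^{+}(\beta d)$. So, taking $c$ large enough (the threshold depending on $m$, or uniform when $m$ ranges over the fixed finite set used in Section~\ref{sec:trapeziums}), for every $z\in H_{m,c}$ we obtain $|f(z)|\ge R(c)$, where
\[
R(c):=e^c-(p-1)e^{(2m+1)\pi}e^{\beta c}.
\]
This is positive for large $c$ and satisfies $R(c)/e^c=1-(p-1)e^{(2m+1)\pi}e^{(\beta-1)c}\to 1$ as $c\to\infty$, since $\beta<1$; that is, $R(c)\sim e^c$. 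Consequently $f(H_{m,c})$ avoids $B(0,R(c))$, and intersecting both sides of this with $\{\operatorname{Re}z>0\}$ gives the stated inclusion.

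There is no genuine obstacle here; the only point needing care is that $H_{m,c}$ is unbounded, so a crude pointwise perturbation estimate such as $|f(z)|>\tfrac{3}{4}e^{\operatorname{Re}z}$ (which one can read off directly from \eqref{eq:egine}) would only give a lower bound proportional to $e^c$ with a constant factor strictly less than $1$, not the required asymptotic $R(c)\sim e^c$. What makes the argument work is precisely that the error term has size $e^{\beta\operatorname{Re}z}$ with $\beta<1$, so Lemma~\ref{rexp} lets us transfer the infimum of $|f(z)|$ over the strip to its left boundary. This is in effect the computation already carried out on the edge $S_{m,c}^4$ in the proof of Lemma~\ref{trap}, now performed on the whole half-strip rather than just that edge.
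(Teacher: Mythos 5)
Your proof is correct, and in fact it establishes something marginally stronger than the statement: the pointwise estimate $|f(z)|\ge e^{x}-(p-1)e^{(2m+1)\pi}e^{x\cos(2\pi/p)}$ for $z=x+iy\in H_{m,c}$, pushed down to the left edge via the monotonicity in Lemma~\ref{rexp}, shows that \emph{all} of $f(H_{m,c})$ avoids $B(0,R(c))$, not only its intersection with the right half-plane. The paper takes a different, shorter route: it simply cites Lemma~\ref{Ym} (the horizontal edges $Y^{\pm}_m$ map into the left half-plane) together with Lemma~\ref{trap} applied to every vertical segment $S_{m,c'}^4$ with $c'\ge c$, whose images lie in thin annuli of radius roughly $e^{c'}$ with inner radius $r(m,c')$. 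Your argument is essentially the perturbation estimate inside the proof of Lemma~\ref{trap} carried out on the whole half-strip at once rather than segment by segment; what it buys is a self-contained, quantitative bound with an explicit $R(c)$, no appeal to Lemma~\ref{Ym}, and no need to reason about how the image of $\partial H_{m,c}$ controls the image of the interior, while the paper's version buys brevity by reusing lemmas already proved. Two details you handled appropriately: your $R(c)$ (like the paper's $r(m,c)$) depends on $m$, which is harmless since $m$ ranges over the fixed finite set used in Section~\ref{sec:trapeziums}; and your remark that a constant-factor bound such as $|f(z)|>\tfrac{3}{4}e^{\operatorname{Re} z}$ from \eqref{eq:egine} would not suffice is accurate, since the corollary requires $R(c)\sim e^{c}$, which is exactly what the exponent gap $\cos(2\pi/p)<1$ provides.
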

\begin{proof} This is an immediate consequence of Lemmas \ref{Ym} and \ref{trap}, keeping in mind that we can apply Lemma \ref{trap} to all large enough $c$.
\end{proof}
Following the notation established in the previous section, we extend the inverse branches of $f$, $L_j$ (previously defined in the proof of Theorem \ref{lj}), to the half-strips $H_{m,c}$ as follows: 

Let $z \in H_{m,c}$ for some $m$ and $c$. Then $L_j(z)$ is the preimage under $f$ of $z$ in $H_{j,c}$. Note that, for all $m$ and $c$ and an~ $j$, we have
\begin{equation}
\left| L_j^{'} (z) \right| < 1/2 \text{ for } z \in H_{m,c}, \label{eq:daveanisotitav2} 
\end{equation}
by \eqref{eq:daveleq1}.

We will now prove that if $s=s_0 s_1 s_2 \ldots $ is a bounded sequence, then there is a unique hair attached to $z(s)$.
Let $E(z) = (1/e) e^z$. 
For any $s \in \Sigma_K$ we define the functions $G^n_s : [1 , \infty) \to \mig$ by
\begin{align*}
G^n_s (t) = L^n_s \circ E^n(t), \quad 1 \leq t < \infty;
\end{align*}
recall that $L_s^n$ was defined in \eqref{eq:asteraki}.
We will show that the limit function of $G^n_s$ exists and that it provides a parametrisation of the hair $h_s$ as a function of $t$. First, an inequality about the functions $G^n_s (t)$.

\begin{prop} \label{anisotita}
There exist $q, M>0$ such that, for any $s=s_0 s_1 s_2 \cdots \in \Sigma_K$ and for all $t > q+1$ and $n \geq 1$, we have
\begin{equation}
t- M \leq \operatorname{Re} G^n_s (t) \leq t + M.
\end{equation}
\end{prop}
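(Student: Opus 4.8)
The plan is to bound the iterates $\operatorname{Re} f^n(G^n_s(t))$ and then pull the estimate back through the contracting inverse branches $L_j$, controlling the accumulated error with a geometric series. First I would record the one key observation that makes everything work: $f = \exp + (f - \exp)$, and on the half-strips $H_{m,c}$ for points with large real part the correction term $f - \exp$ is negligible compared to $e^z$ (this is exactly the content behind Lemma~\ref{trap} and Corollary~\ref{corinv}, coming from \eqref{eq:egine}). Consequently, for $z$ with $\operatorname{Re} z = x$ large, $f(z)$ lies within a small ball about $e^z$, and $E(z) = (1/e)e^z$ satisfies $\operatorname{Re} E(z) = (1/e)e^x \cos(\operatorname{Im} z) - 1$; comparing $f$ to $E$ I would first establish that $f$ maps $\{\operatorname{Re} z \geq x\} \cap H_{m,c}$ in a way that dominates $E$ in real part up to a bounded additive error, i.e.\ there is a constant so that $\operatorname{Re} E^n(t)$ and $\operatorname{Re} f^n(G^n_s(t))$ differ by $O(1)$.

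The main step is then the pullback. By construction $f^n(G^n_s(t)) = E^n(t)$, so I want to transfer the trivial fact $\operatorname{Re} E^n(t) \in [t - O(1), t + O(1)]$ (which holds because $E^n(t) \to \infty$ along the real axis and, for $t > q+1$ with $q$ chosen so $E$ is expanding past $q$, $E(t) > t$, while an easy induction controls the gap) back to $G^n_s(t) = L^n_s(E^n(t))$. The tool is \eqref{eq:daveanisotitav2}: each $L_j$ has derivative bounded by $1/2$ on the half-strips, hence $L_j$ is Lipschitz with constant $1/2$ there, and in particular it moves real parts by at most a fixed amount when applied to a point whose image stays in a fixed strip. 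Writing $G^n_s(t) = L_{s_0}(G^{n-1}_{\sigma s}(E(t)))$ I would set up a recursion: $|\operatorname{Re} G^n_s(t) - \operatorname{Re}(\text{something explicit})|$ is bounded by $\tfrac12$ times the analogous quantity one level down plus a uniform constant (the constant coming from how far $L_{s_0}$ can displace a fixed reference point, uniform over the finitely many branches $s_0$ with $|s_0| \le K$, and from the $f$-versus-$E$ comparison). Summing the geometric series $\sum \tfrac12^k \cdot \text{const}$ gives a bound independent of $n$ and of $s$, which is exactly the claimed $M$.

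The main obstacle I expect is making the base case and the comparison between $f$ and $E$ genuinely uniform in both $n$ and $s$, and in particular checking that the orbit $f^k(G^n_s(t))$ stays in the region where the estimate $|f - \exp| \ll |\exp|$ is valid for all intermediate $k < n$ — one needs the real parts to be increasing (or at least bounded below) along the whole finite orbit, not just at the endpoints, so that the points never wander back toward the origin where $f$ no longer resembles a single exponential. This is handled by choosing $q$ large enough that $t > q+1$ forces $\operatorname{Re} E^j(t)$ to increase with $j$, together with the $f$-vs-$E$ comparison propagating that monotonicity to the actual orbit; once that is in place the geometric-series argument is routine. I would also need to be slightly careful that $G^n_s(t)$ genuinely lands in $H_{s_0,c}$ (so that the next inverse branch is defined), which follows from Corollary~\ref{corinv} and the fact that $E^n(t)$ is real and large, hence in the right half-plane outside $B(0, R(c))$.
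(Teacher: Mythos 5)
Your overall scheme --- compare the pullback $G^n_s(t)=L^n_s(E^n(t))$ with a reference orbit and control the accumulated error by a geometric series --- could in principle be made to work, but the two facts you rest it on are false, and they sit exactly where the content of the proposition lies. First, it is not true that $\operatorname{Re} E^n(t)\in[t-O(1),t+O(1)]$: since $E(t)=(1/e)e^t$, the quantity $E^n(t)$ is a tower of exponentials and is vastly larger than $t$; there is nothing to ``transfer back''. Second, the inverse branches $L_j$ do not ``move real parts by at most a fixed amount'': by \eqref{eq:stavros} each $L_j$ is a branch of $\log$ up to the correction $-\log(1+\epsilon)$, so it collapses a point of real part about $E^{k+1}(t)$ down to real part about $E^k(t)-1$, an enormous displacement. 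The bound $|L_j'|<1/2$ of \eqref{eq:daveanisotitav2} controls distances between \emph{pairs} of points under the same branch, not the displacement of a single point, and in the paper it is used only after the proposition, to show that $\{G^n_s(t)\}$ is Cauchy and that $h_s$ is continuous --- not to prove the proposition itself.

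The quantity your recursion actually needs to be bounded is not how far $L_{s_k}$ moves a point, but how far $L_{s_k}(E^{k+1}(t))$ lands from the \emph{next} reference point $E^k(t)$; this is bounded (by roughly $1+M_K+\log(1+Ce^{aq})$) precisely because $L_j=\log(\cdot)-\log(1+\epsilon)$ with $|\epsilon|$ exponentially small in the real part (the paper's \eqref{eq:epsa}) and because of the exact telescoping $\log E^{k+1}(t)=E^k(t)-1$ --- which is incompatible with your claim that $L_j$ displaces points by $O(1)$. With that ingredient in place, a recursion of the form $\delta_k\le\tfrac12\delta_{k+1}+C$ for $\delta_k=\bigl|G^{n-k}_{\sigma^k(s)}(E^k(t))-E^k(t)\bigr|$, with $\delta_n=0$, does close up and gives a bound independent of $n$ and $s$ (one must also check inductively that the segment joining $G^{n-k-1}_{\sigma^{k+1}(s)}(E^{k+1}(t))$ to $E^{k+1}(t)$ stays where the branch is defined and the derivative bound holds). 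The paper instead dispenses with the contraction entirely: it runs a direct induction on $\operatorname{Re}G^m_s(t)$ using the representation $f^{-1}(w)=\log w-\log(1+\epsilon)$ and controls the accumulated error by the convergent series $s_n(q)=\sum_k\log(1+C\exp(aE^k(q)))$, whose terms decay super-exponentially. As written, your proposal is missing this log-plus-small-error description of the inverse branches, and without it the argument does not identify why the answer is $t\pm M$ at all.
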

 \begin{proof}
Since $|s_i| \leq K$ for all $i$, there exists $M_K > 2 \pi$ such that $|\operatorname{Im} L_{s_i}(z)| < M_K$ for each $s_i$ and all $z=x+iy \in T_0(\nu)$, whose preimages we will consider. 

Define $\epsilon(z) := f(z)/ e^z-1$. We will make use of the following estimate of the quantity $|1+\epsilon(z)|$:
\begin{align*}
\left| 1 + \epsilon(z) \right|   &=  \left| \frac{ e^z + \sum_{k=1}^{p-1} \exp(\omega_p^k z)}{e^z} \right|
\\&=  \left|  1+ \sum_{k=1}^{p-1} \exp((\omega_p^k-1) z) \right|
\\&\leq 1+ (p-1)  \max_{{k \in \{1, \ldots, p-1\}}} \left| \exp ((\omega_p^k -1 ) z \right|
\\&\leq 1+ (p-1)  \max_{{k \in \{1, \ldots, p-1\}}} \exp \left( \operatorname{Re}((\omega_p^k -1 ) z)  \right)
\\&\leq 1+ (p-1) \exp  \left( \max_{{k \in \{1, \ldots, p-1\}}} \left( (\cos(2 k\pi/p)-1)x - y(\sin(2k\pi/p)-1) \right) \right).
\end{align*}
We can thus write
\begin{equation} \label{eq:epsa}
\left| 1 + \epsilon(z) \right|  \leq 1+ Ce^{a x},
\end{equation}
where
\begin{align*}
a=  \max_{{k \in \{1, \ldots, p-1\}}}  \left(  \cos( 2k \pi /p)- 1 \right) <0
\end{align*}
and
\begin{align*}
C = (p-1)  \max_{k \in \{1, \ldots, p-1\}, |y| \leq M_K} \exp \left( -y (\sin (2 \pi k/p) -1)\right)  >0.
\end{align*}
Recall that $E: \pra \to \pra$ is $E(t)=(1/e)e^t$.
We define 
\begin{align*}
s_n(t):= \sum_{k=0}^{n} \log ( 1 + C \exp (a E^k(t)))
\end{align*}
for $t>1$, with $s(t) : = \lim_{n \to \infty}s_n(t)$. The series defining $s(t)$ is convergent and the function $s$ is decreasing to $0$ with respect to $t$, since $a<0$ and $C=C(p,k)$ is independent of $y$. 

In the following we will consider several lower bounds for $q$, starting here: we can choose $q>0$ large enough and, further, $M=M(q)>1$ such that
\begin{equation}
s_n(t) \leq s(t) \leq M - 1 \label{eq:epsa2}
\end{equation}
and
\begin{equation}
s_n(t) \leq s(t) + \log M_K - 1 \leq M,  \label{eq:epsa4}
\end{equation}
for all $t > q + 1$.
We further choose $q> 0$ to be large enough so that
\begin{equation} \label{eq:dyo}
E(q)  >  M + 2 
\end{equation}
and consider all $t>q+1$ such that
\begin{equation}
\operatorname{Re} L_j (E(t)) \geq q \label{eq:epsa1.5}
\end{equation}
for all $|j| \leq K$. 

For $t \geq 1$, we have $E(t) \geq 1$, so
\begin{equation}
1 + \frac{s_n(q)}{E(t)} \leq e^{s_n(q)}, \quad \text{ for all } n \in \fis \text{ and } q>0.  \label{eq:epsa5}
\end{equation}

 For the rest of the proof we assume that $q$ is large enough so that (\ref{eq:epsa2}), (\ref{eq:epsa4}), (\ref{eq:epsa1.5}) and (\ref{eq:epsa5}) all hold.
We will prove that for any sequence $s \in \Sigma_{K'}$ (with $K' \leq K-1$) it is the case that
\begin{align*}
t-M \leq \operatorname{Re} G^n_s (t) 
\end{align*}
for all $n \in \fis$ and all such large enough $t>q+1$.\\
We have 
\begin{align*}
f(z) = e^z (1+\epsilon(z)),
\end{align*}
so
\begin{align*}
e^z = \frac{f(z)}{1+\epsilon(z)}
\end{align*}
and thus, if $f(z) = w$, with $z,w \in T_0(\nu)$, then the corresponding inverse branch is
\begin{equation} \label{eq:stavros}
f^{-1}(w) = z = \log w - \log(1+ \epsilon(z))
\end{equation}
for the appropriate branch of the logarithm.
Hence we can write
 \begin{align*}
\operatorname{Re} G^1_s (t)&=  \operatorname{Re} L_{s_0} (E(t))
\\&= \operatorname{Re} \log E(t) -  \operatorname{Re} \log(1+  \epsilon( L_{s_0}( E(t))))
\\&\geq t-1 - \log (1+Ce^{a q})
 \\&\geq t - 1 - s_0(q)
 \\&\geq t - M
 \end{align*}
with the first inequality following from (\ref{eq:epsa}) and (\ref{eq:epsa1.5}), the second following from the fact that $s_0(q) \leq s(q)$, while the third follows from (\ref{eq:epsa2}). This is the first step of the induction. Now let us assume that, for all $s \in \Sigma_k$, for some $m \geq 3$ and for all $q$ large enough and $t > E(q)$, we have
 \begin{equation}
\operatorname{Re} G_s^m(t) \geq t -1 - s_{m-1}(q)\label{eq:in1},
\end{equation}
from which
 \begin{align*}
\operatorname{Re} G_s^m(t) \geq t - M
\end{align*} follows, by \eqref{eq:epsa2} and the definition of $s(q)$. We will proceed to deduce that \eqref{eq:in1} holds with $m$ replaced by $m+1$.
 Substituting $E(t)$ for $t$ and $E(q)$ for $q$ in (\ref{eq:in1}), we obtain
 \begin{align*}
\operatorname{Re} G_s^m(E(t)) &\geq E(t) - 1-s_{m-1}(E(q))
\end{align*}
from which it follows by \eqref{eq:epsa2} and the definition of $s(q)$ that
 \begin{equation} \label{eq:dyo2}
\operatorname{Re} G_s^m(E(t)) \geq E(t) - M.
\end{equation}
So, by \eqref{eq:epsa} and \eqref{eq:stavros},
 \begin{align*}
\operatorname{Re} G_{s}^{m+1} (t) &= \operatorname{Re} L_{s_0} \left( G_{\sigma(s)}^m (E(t)) \right) 
\\& \geq \log \left| G_{\sigma(s)}^m (E(t)) \right| - \log(1+Ce^{a q})
\\&\geq \log \left| \operatorname{Re} G_{\sigma(s)}^m (E(t)) \right| - \log(1+Ce^{a q})
\\&= \log \left( \operatorname{Re} G_{\sigma(s)}^m (E(t)) \right) - \log(1+Ce^{a q}),
\end{align*}
with the last equality following from \eqref{eq:dyo} and \eqref{eq:dyo2}. We claim that
 \begin{align*}
\operatorname{Re} G_{s}^{m+1} (t) &\geq \log \left(  E(t) - 1 - s_{m-1}(E(q)) \right) -  \log(1+Ce^{a q})
\\&\geq   \log E(t) - \log(1+s_{m-1}(E(q)))-  \log(1+Ce^{a q})
\\&\geq \log E(t)  - s_{m-1}(E(q)) - \log(1+Ce^{aq}) 
\\&= \log E(t) - s_m(q) 
\\&\geq t  -1-s_m(q)
\\&\geq t  - M,
\end{align*}
for any $n \in \fis$ and $t>E(q)$.
The third inequality follows easily from the fact that $\log(1+x) \leq x$ for $x>0$. We now prove that the second one holds as well. 

Here we use the fact that, for $a>b>1$ and $a \geq b^2/(b-1)$, we have
 \begin{align*}
\log (a-b) \geq \log a - \log b,
\end{align*}
which we apply with $a=E(t)$ and $b=1+s_{m-1}(E(q))$.
For $t > E(q)$, we have 
 \begin{align*}
E(t)>1+s_{m-1}(E(q))>1,
\end{align*}
so it remains to show that, for large enough $q$, (and since $t >E(q)$)
 \begin{align*}
E(E(q)) \geq \frac{(1+s_{m-1}(E(q)))^2}{s_{m-1}(E(q))},
\end{align*}
or, equivalently,
 \begin{align*}
E(E(q)) \geq 1/s_{m-1}(E(q)) + 2 + s_{m-1}(E(q)).
\end{align*}
The quantity $s_{m-1}(E(q))$ decreases to $0$ as $q$ increases to infinity, but
 \begin{align*}
1/s_{m-1}(E(q)) \leq 1/s_0(E(q)) = 1/\log(1+Ce^{aE(q)}).
\end{align*}
For $q$ large enough we have $Ce^{a E(q)} \leq 1$, since $C$ is bounded and $-1<a<0$, so
 \begin{align*}
\frac{1}{\log(1+Ce^{a E(q)})} \leq \frac{Ce^{-aE(q)}}{\log 2} \leq E(E(q)),
\end{align*}
again for $q$ large enough, and using the fact that 
 \begin{align*}
\frac{\log(1+x)}{x} \geq \log 2
\end{align*}
for $0<x<1$.

We have thus proven our claim that \eqref{eq:in1} holds with $m+1$ replacing $m$ and so the induction is complete.

We will now, again using induction, prove that
 \begin{align*}
\operatorname{Re} G^n_s(t) \leq t + M
\end{align*}
for all $n \geq 1$, and for $q$ large enough and $t > q + 1$.
From (\ref{eq:epsa}) and  (\ref{eq:epsa2}) we have
 \begin{align*}
\operatorname{Re} G^1_s (t)&=  \operatorname{Re} L_{s_0} (E(t))
\\&= \operatorname{Re} \log E(t) -  \operatorname{Re} \log(1+  \epsilon( L_{s_0} (E(t))))
\\&\leq t-1 + \log(1+Ce^{aq})
\\&\leq t + M.
\end{align*}
Now suppose that
 \begin{align*}
\operatorname{Re} G^m_s(t) \leq t+s_m(q),
\end{align*}
so, in particular,
 \begin{align*}
\operatorname{Re} G^s_u(t) \leq t + M.
\end{align*}
Then,
\begin{align*}
\operatorname{Re} G^m_{\sigma(s)}(E(t)) \leq E(t) +  \sum_{k=1}^m \log(1+C \exp(a E^k(q))),
\end{align*}
and
\begin{align*}
\operatorname{Re} G_{s}^{m+1} (t) &= \operatorname{Re} L_{s_0} \left( G_{\sigma(s)}^m (E(t)) \right).
\end{align*}
From this, together with \eqref{eq:stavros}, we have
\begin{align*}
\operatorname{Re} G_s^{m+1} (t) &\leq \log | G^m_{\sigma(s)} (E(t)) | +\log(1+Ce^{a{q}}).
\end{align*}
Thus,
\begin{align*}
\operatorname{Re} G_s^{m+1} (t) &\leq \log \left( | \operatorname{Re} G^m_{\sigma(s)} (E(t)) | + | \operatorname{Im} G^n_{\sigma(s)} (E(t)) |  \right)+  \log(1+Ce^{aq})
\\&\leq \log(  \operatorname{Re}  G^m_{\sigma(s)} (E(t))) + \log(1+Ce^{aq}) + \log M_K,
\end{align*}
since $\log(a+b) \leq \log a + \log b$ as long as $a,b >2$ (recall that $\operatorname{Re}  G^m_{\sigma(s)} (E(t)) \geq E(t) - M >2$ from \eqref{eq:dyo} and \eqref{eq:dyo2}, as well as that $M_K > 2 \pi$). Now, by taking logarithms in \eqref{eq:epsa5}, we have
\begin{align*}
\operatorname{Re} G_s^{m+1} (t) &\leq \log \left(   E(t) +  \sum_{k=1}^m \log(1+C \exp(a E^k(q))) \right)+ \log(1+Ce^{aq}) + \log M_K
\\&\leq t-1 +\sum_{k=1}^m \log(1+C \exp(a E^k(q))) + \log(1 + Ce^{aq}) + \log M_K
\\&=  t-1 +\sum_{k=0}^m \log(1+C \exp(a E^k(q))) + \log M_K
\\&=  t-1 + s_m(q)+ \log M_K
\\&\leq t+M,
\end{align*}
thus proving the desired result for $m+1$ and completing the induction.
\end{proof}

We now prove that 
\begin{align*}
h_s(t) : = \lim_{n \to \infty} G^n_s (t)
\end{align*}
is a well defined function for $t \geq 1$. It suffices to prove that $\{G^n_s (t)\}$ is Cauchy for all large $t$. From the result of Proposition \ref{anisotita} and the quantity $M_K$ defined at the start of its proof, we have, for large enough $t$,
\begin{align*}
|G^n_s (t) - G^{n+1}_su (t) | \leq 2(M+M_K) 
\end{align*}
for any $s \in \Sigma_K$. For those large enough $t$ we have
\begin{align*}
\left| G^{N+n}_s (t) - G^{N+n+1}_s (t) \right| &= \left| L^N_s \circ G^{n}_{\sigma^N(s)} (t) - L^N_s \circ G^{n+1}_{\sigma^N(s)} (t)  \right| 
\\& \leq   \left| \left( L^N_s \right)^{'} (z) \right|  \left| G^{n}_{\sigma^N(s)} (t) - G^{n+1}_{\sigma^N(s)} (t)  \right| 
\\& \leq  (1/2)^N  2(M+M_K),
\end{align*}
with the last inequality due to $\eqref{eq:daveanisotitav2}$. 
Now let $\varepsilon>0$. There exists $N=N(\varepsilon)>0$ such that, for all $m>n\geq N$, 
\begin{align*}
\left| G^{N+n}_s (t) - G^{N+m}_s (t) \right| \leq 2(M+M_K) \sum_{k=0}^{m-n-1} \frac{1}{2^{N+k}},
\end{align*}
which is less than $\varepsilon$ for large enough $N$. This proves our claim that $h_s$ is well defined.

Next, we prove that $h_s$ is continuous in $[1, \infty)$. We initially leave out $t=1$; it is handled separately below.
\begin{prop}
Suppose that $s = s_0 s_1 s_2 \ldots \in \Sigma_K$. Then $h_s(t)$ is continuous as a function of $t \in (1,\infty)$.
\end{prop}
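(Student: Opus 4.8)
The plan is to show that $h_s = \lim_{n\to\infty} G^n_s$ is continuous on $(1,\infty)$ by exhibiting it as a \emph{uniform} limit, on compact subsets of $(1,\infty)$, of the continuous functions $G^n_s$. Each $G^n_s = L^n_s \circ E^n$ is continuous, being a composition of the analytic inverse branches $L_{s_i}$ (which, by the covering results of Section~\ref{sec:trapeziums} and the extension to the half-strips $H_{m,c}$, are well defined and analytic on the relevant domains) with the real-analytic map $E^n$; so the content is entirely in the uniformity of convergence.

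First I would fix a compact interval $[t_1,t_2] \subset (1,\infty)$; after possibly enlarging $q$ we may assume $t_1 > q+1$, so that Proposition~\ref{anisotita} applies uniformly on $[t_1,t_2]$ and keeps all the relevant orbits inside $T_0(\nu)$ where the estimate \eqref{eq:daveanisotitav2}, namely $|L_j'(z)| < 1/2$, is valid. Then I would run exactly the telescoping/Cauchy estimate already carried out in the paragraph preceding this proposition: for $N$ applications of inverse branches,
\begin{align*}
\left| G^{N+n}_s(t) - G^{N+n+1}_s(t) \right| &= \left| L^N_s\!\left(G^n_{\sigma^N(s)}(t)\right) - L^N_s\!\left(G^{n+1}_{\sigma^N(s)}(t)\right) \right| \\
&\leq (1/2)^N \left| G^n_{\sigma^N(s)}(t) - G^{n+1}_{\sigma^N(s)}(t) \right| \leq (1/2)^N \, 2(M + M_K),
\end{align*}
where the first bound uses the chain rule together with \eqref{eq:daveanisotitav2} and the second uses Proposition~\ref{anisotita} (which bounds $\operatorname{Re}$) together with the bound $M_K$ on the imaginary parts of single inverse-branch steps. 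Summing a geometric series, $\sup_{t \in [t_1,t_2]} |G^n_s(t) - h_s(t)| \leq 2(M+M_K)\,2^{-n+1} \to 0$, so the convergence is uniform on $[t_1,t_2]$; hence $h_s$ is a uniform limit of continuous functions there and is therefore continuous on $[t_1,t_2]$, and since $[t_1,t_2] \subset (1,\infty)$ was arbitrary, $h_s$ is continuous on $(1,\infty)$.

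The one point requiring care — and the main (mild) obstacle — is that the constants $M$, $M_K$, and the lower bound $q$ in Proposition~\ref{anisotita} were produced only for $t$ larger than some threshold, so the estimate above is a priori only available for large $t$. I would address this by noting that continuity is a local property: given any $t_0 \in (1,\infty)$, it suffices to prove continuity on a neighbourhood of $t_0$, and one can first apply the iterate $f$ enough times — equivalently, absorb a finite prefix $s_0\ldots s_{j-1}$ of the itinerary — using the identity $h_s(t) = L_{s_0}\circ\cdots\circ L_{s_{j-1}}\big(h_{\sigma^j(s)}(E^j(t))\big)$, valid by construction of the $G^n_s$; since $E^j(t) \to \infty$ as $j \to \infty$, for $j$ large enough $E^j(t_0) > q+1$, so $h_{\sigma^j(s)}$ is continuous at $E^j(t_0)$ by the argument above, and then composing with the finitely many continuous (indeed analytic) inverse branches $L_{s_0}, \ldots, L_{s_{j-1}}$ shows $h_s$ is continuous at $t_0$. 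This handles all of $(1,\infty)$; the endpoint $t=1$, as the paper notes, is treated separately afterwards.
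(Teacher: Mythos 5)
Your argument is correct, and its second half --- handling $1<t\le T$ by writing $h_s(t)=L_{s_0}\circ\cdots\circ L_{s_{j-1}}\bigl(h_{\sigma^j(s)}(E^j(t))\bigr)$ with $j$ chosen so that $E^j(t_0)$ exceeds the threshold, then composing with finitely many continuous inverse branches --- is exactly the paper's argument for that range. Where you genuinely differ is in the large-$t$ regime: the paper proves continuity there by a direct $\varepsilon$--$\delta$ estimate in the parameter, choosing $k$ with $\alpha^k(3M+2\pi)<\varepsilon$ and bounding $|G_s^{n+k}(t)-G_s^{n+k}(t_0)|\le\alpha^k\,|G^n_{\sigma^k(s)}(E^k(t))-G^n_{\sigma^k(s)}(E^k(t_0))|$ using Proposition \ref{anisotita} and the $2\pi$ bound on imaginary-part differences, whereas you observe that the Cauchy estimate the paper already used to show $h_s$ is well defined is uniform in $t$ above the threshold, so $G^n_s\to h_s$ uniformly there and continuity follows from the uniform-limit theorem. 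Both mechanisms rest on the same ingredients (Proposition \ref{anisotita} together with the contraction of long compositions of inverse branches, via \eqref{eq:daveanisotitav2} in your case and $|L_{s_i}'|<\alpha$ in the paper's), so your version is a legitimate and somewhat more economical packaging of the same estimates; the paper's version gives in exchange an explicit modulus of continuity. Two minor points: the phrase ``after possibly enlarging $q$ we may assume $t_1>q+1$'' is backwards, since enlarging $q$ only raises the threshold and $q$ cannot be taken smaller --- but this is immaterial because your localization step (absorbing a finite prefix of the itinerary) covers every $t_0\in(1,\infty)$ anyway; and in the telescoping display the inner arguments should be $E^N(t)$ rather than $t$, i.e.\ $G^{N+n}_s(t)=L^N_s\bigl(G^n_{\sigma^N(s)}(E^N(t))\bigr)$, a slip inherited from the paper's own display which does not affect the bound, since Proposition \ref{anisotita} applies at $E^N(t)\ge t$.
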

\begin{proof}
 Choose $\alpha$ with $0 < \alpha <1$ and let $q$ and $M$ be as specified in the previous proposition. Choose $T > q + 2M$ so that, if $\operatorname{Re} z> T$ and $|\operatorname{Im} z| < M_K$ (with $M_K$ defined as in Proposition \ref{anisotita}), then
\begin{equation}
|L_{s_i}^{'} (z)| < \alpha.\label{eq:p27}
\end{equation}
This is possible due to \eqref{eq:daveleq1} of Lemma \ref{davel}.
By Proposition \ref{anisotita}, if $t > T$, then
\begin{equation}
E^k(t) - M \leq \operatorname{Re} G_s^n (E^k(t)) \leq E^k(t) + M \label{eq:p16},
\end{equation}
for all $n, k \geq 0$.

We first prove the continuity of $h_s(t)$ for $t>T$.  Let $\varepsilon > 0$ and choose $k \in \fis$ so that $\alpha^k (3M+2 \pi ) < \varepsilon$. Given $t_0 > T$, choose $\delta$ such that, if $|t-t_0| < \delta$, then $|E^k(t) - E^k(t_0)| < M$. We claim that, if $|t-t_0| < \delta$, then 
$|h_s(t) - h_s(t_0)| < \varepsilon$. Indeed, we note that for such $t$ and each $n \geq 0$ we have
\begin{align*}
|G_{\sigma^k(s)}^n (E^k(t)) - G_{\sigma^k(s)}^n (E^k(t_0))| < 3M + 2 \pi.
\end{align*}
This follows since, by (\ref{eq:p16}) and our choice of $\delta$,
\begin{align*}
\left| \operatorname {Re} G_{\sigma^k(s)}^n ( E^k(t)) - \operatorname {Re} G_{\sigma^k(s)}^n ( E^k(t_0)) \right| < \left| E^k(t) - E^k(t_0) \right| + 2M < 3M 
\end{align*}
and
\begin{align*}
\left| \operatorname{Im} G_{\sigma^k(s)}^n ( E^k(t)) - \operatorname{Im} G_{\sigma^k(s)}^n ( E^k(t_0)) \right| < 2 \pi.
\end{align*}
Consequently, by \eqref{eq:p27} and \eqref{eq:p16} for $|t - t_0 | < \delta$ and $n \geq 0$,
\begin{align*}
\left| G_s^{n+k} (t) - G_s^{n+k}(t_0) \right| &= 
\left| L_s^k \circ G_{\sigma^k(s)}^n (E^k(t)) - L_s^k \circ G_{\sigma^k(s)}^n (E^k(t_0)) \right| 
\\&\leq \alpha^k \left| G_{\sigma^k(s)}^n (E^k(t)) - G_{\sigma^k(s)}^n (E^k(t_0)) \right|
\\&\leq \alpha^k (3M+2 \pi) < \varepsilon,
\end{align*}
from which it follows that $t \mapsto h_s (t)$ is continuous for any $s = s_0 s_1 s_2 \ldots \in \Sigma_K$ and $t > T$. 

We will now prove continuity for $1 < t \leq T$. If $1< t < T$, then there exists $k$ (depending on $t$) such that $E^k(t) > T$. Then, by the earlier part of the proof,
\begin{align*}
t \mapsto L_s^k \circ h_{\sigma^k(s)} (E^k(t))
\end{align*}
is continuous, since each inverse function of $f$ is well defined and continuous on the half-strips $H_{m,c}$; see the remark following the proof of Corollary \ref{corinv}. But this map is given by
\begin{align*}
t \mapsto \lim_{n \to \infty} L_s^k \circ G_{\sigma^k(s)}^n \circ E^k(t) = h_s (t),
\end{align*}
and since $k$ depends only on $t$, the result follows.
\end{proof}
We now prove continuity for $t=1$.

\begin{prop}
Suppose that $s \in \Sigma_K$. Then $h_s(t)$ is continuous at $t=1$.
\end{prop}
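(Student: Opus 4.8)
The plan is to establish right-continuity at $t=1$, that is, $\lim_{t\to 1^{+}}h_s(t)=h_s(1)$. The main tool is the identity
\[
h_s(t)=L_s^{n}\bigl(h_{\sigma^{n}(s)}(E^{n}(t))\bigr),\qquad n\ge 0,\ t\ge 1,
\]
obtained, as in the preceding proposition, from $G_s^{N+n}=L_s^{N}\circ G_{\sigma^{N}(s)}^{n}\circ E^{N}$ by letting $n\to\infty$ and using continuity of $L_s^{N}$. At $t=1$ this reads $h_s(1)=L_s^{n}\bigl(z(\sigma^{n}(s))\bigr)$ (since $E^n(1)=1$ and $h_{\sigma^n(s)}(1)=z(\sigma^n(s))$ by the contraction in the proof of Theorem~\ref{lj}); in particular $h_s(1)=z(s)$. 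Subtracting,
\[
\bigl|h_s(t)-h_s(1)\bigr|=\Bigl|\,L_s^{n}\bigl(h_{\sigma^{n}(s)}(E^{n}(t))\bigr)-L_s^{n}\bigl(z(\sigma^{n}(s))\bigr)\,\Bigr|,
\]
so the task is to choose $n=n(t)$ making the right-hand side tend to $0$.

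I would take $n(t)$ to be the least integer with $E^{n}(t)>T$, where $T$ is the constant from the preceding proposition. Since $E(u)>u$ for $u>1$ and $1$ is the only fixed point of $E$, this is well defined with $E^{n(t)}(t)\in(T,E(T)]$; and, crucially, $n(t)\to\infty$ as $t\to 1^{+}$, because for any fixed $N_0$ the continuity of $E^{N_0}$ forces $E^{N_0}(t)<T$ once $t$ is close enough to $1$. Writing $b_t:=h_{\sigma^{n(t)}(s)}(E^{n(t)}(t))$, the bound of Proposition~\ref{anisotita} passes to the limit function and gives $\operatorname{Re}b_t\in[T-M,E(T)+M]$, with $|\operatorname{Im}b_t|$ below a fixed constant; and $z(\sigma^{n(t)}(s))\in\Lambda_K\subset T^{K}$. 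So both points of the pair lie in one fixed bounded set for all $t$ near $1$.

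It remains to see that $L_s^{n(t)}$ forces such a pair to $0$. Applying inverse branches to $b_t$ drives its real part downwards essentially by the map $u\mapsto 1+\log u$ (the $\epsilon$-corrections being uniformly summable, by Proposition~\ref{anisotita}), so after a number $k_0$ of steps \emph{bounded independently of $t$} the iterate of $b_t$ has entered $T^{K}$; the other point is already in $T^{K}$, and since each $L_j$ maps $\{\operatorname{Re}z>0\}\cap B(0,r(c,m))\supset T^{K}$ into $T_j$, its iterates stay in $T^{K}$. From step $k_0$ on we are exactly in the setting of the proof of Theorem~\ref{lj}: both iterates lie in $T^{K}$, hence in the bounded simply connected domain $G\supset T^{K}$ on which each $L_{s_j}$ is a uniform contraction of the hyperbolic metric $d_G$ by a single factor $\kappa<1$. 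Therefore
\[
d_G\bigl(h_s(t),h_s(1)\bigr)\le\kappa^{\,n(t)-k_0}\,\operatorname{diam}_{d_G}\bigl(T^{K}\bigr)\longrightarrow 0\quad\text{as }t\to 1^{+},
\]
since $n(t)\to\infty$ while $k_0$ stays fixed; as $d_G$ dominates the Euclidean metric on the compact set $T^{K}$ and $h_s(t)\in T^{K}$ for these $t$, we conclude $h_s(t)\to h_s(1)$, which is the asserted continuity at $t=1$.

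The hard part is that the number $n(t)$ of inverse-branch steps grows without bound as $t\downarrow 1$, so one must show that the \emph{iterated} contraction — not merely the contraction of each individual step — forces the distance to $0$. This succeeds because the pair being contracted stays in a single fixed bounded set (by the estimate of Proposition~\ref{anisotita}) and only boundedly many steps are spent steering the outer point into $T^{K}$, after which the uniform contraction from the proof of Theorem~\ref{lj} governs everything.
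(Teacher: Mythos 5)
Your argument is correct in outline, and its skeleton matches the paper's: write $h_s(t)=L_s^{n}\bigl(h_{\sigma^{n}(s)}(E^{n}(t))\bigr)$, note that the parabolic fixed point of $E$ at $1$ forces $n(t)\to\infty$ as $t\to 1^{+}$ while the pulled-back point stays in a fixed bounded region (via Proposition~\ref{anisotita} and the bounded imaginary parts), and then show that the $n(t)$-fold pullback of that bounded region collapses to the single point $z(s)=h_s(1)$. Where you genuinely diverge is in the collapsing mechanism: the paper proves that the nested preimages of the trapeziums shrink to a point via the Branner--Hubbard criterion, summing the (constant) conformal moduli of the annuli $L_{s_0}\circ\cdots\circ L_{s_{k-1}}(T_{s_k,c})\setminus \overline{L_{s_0}\circ\cdots\circ L_{s_{k}}(T_{s_{k+1},c})}$, whereas you reuse the uniform Poincar\'e-metric contraction already set up in the proof of Theorem~\ref{lj} (each $L_{s_j}$ maps $G$ into a fixed compact subset, hence contracts $d_G$ by a single factor $\kappa<1$), which gives the cleaner quantitative bound $d_G(h_s(t),h_s(1))\le\kappa^{\,n(t)-k_0}\operatorname{diam}_{d_G}(T^K)$ and avoids the modulus computation altogether. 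Both are legitimate; yours buys an explicit geometric rate and economy of tools, the paper's buys independence from having to keep track of where the intermediate hair points sit relative to $G$.

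One step you should tighten is the claim that a $t$-independent number $k_0$ of inverse-branch steps brings $b_t$ into $T^K$. The intermediate points are $h_{\sigma^{n-k}(s)}(E^{n-k}(t))$, and for the boundedly many indices with $E^{n-k}(t)\in(q+1,T]$ Proposition~\ref{anisotita} only bounds their real parts by $T+M$, which need not be below the right-hand side $x=c$ of the trapeziums fixed in Section~\ref{sec:trapeziums}; for smaller parameters the proposition gives no bound at all, so "entering $T^K$" is not automatic. The clean fix is the one the paper itself uses: since the hairs do not depend on $c$, re-choose $c$ large enough (e.g.\ $c>E(T)+M$) so that every point in the relevant strips with real part at most $E(T)+M$, lying to the right of $V_0$ (as all values of the branches $L_j$ do), is already in $T^K_c$; then $b_t\in T^K$ and you can take $k_0=0$. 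Alternatively, justify your $u\mapsto\log u+O(1)$ descent with the estimate $\operatorname{Re}L_j(w)\le\log|w|+\log(1+Ce^{aq})$ from \eqref{eq:stavros} and check that the bounded value it stabilises at is below $c$ — which again amounts to enlarging $c$. With that adjustment the proof goes through.
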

\begin{proof}
From Section \ref{sec:trapeziums}, we know that, for all $i \geq 0$, $L_{s_i}(z)$ maps $T^K$ (defined in that section as the union of the relevant trapeziums) strictly inside itself for any $c$ large enough, with $x=c$ being the line on which the right-hand sides of each of the trapeziums in $T^K$ lie. As we previously saw in Lemma~\ref{lj}, $L_{s_i}$ is a strict contraction with respect to the Poincaré  metric on $T^K$. We need to use this fact to prove our result, but also we want to benefit from the inequalities of Proposition \ref{anisotita}.

 To that end, note that for a value of $t$ that is sufficiently close to $1$, there exists some integer $N$ (dependent ont $t$) such that $E^N(t)$ is larger than the $q$ that is specified in Proposition \ref{anisotita}. Thus, for any $n > N$, we have $G_{\sigma^N ( s)}^n (E^N(t)) \in T^K$ for $c$ sufficiently large. We can now use the Poincaré metric to show that the distance between $h_s(1) = z(s)$ and 
\begin{align*}
h_s(t) = \lim_{N \to \infty} L_s^N \circ G_{\sigma^n(s)}^n (E^N(t))
\end{align*}
can be made arbitrarily small as $t \to 1^{+}$. 

To prove this, first we note that the endpoint $z(s)$ lies in $T_{s_0,c}$ for any sufficiently large $c>0$. Choose $c>0$ to be large enough so that 
\begin{enumerate} \item $f(T_{j,c})$ contains $T^K_c$ for all $1 \leq |j| \leq K$, and
\item all endpoints for the hairs corresponding to itineraries $s=s_0 s_1 s_2 \ldots$ with $|s_j| \leq K$ lie in $T^K_c$. 
\end{enumerate}
Then, $z(s)$ lies in $T_{j,c}$. Since $z(\sigma^k(s))$ lies in $T_{{s_k},c}$, we claim that $z(s)$ arises as the limit of successive preimages under $f$ of the trapeziums (in accordance with the itinerary).

 To prove this claim, we use the Branner--Hubbard criterion (see, for example, \cite[p. 233, Problem 2.5]{milnor}). Let us denote by $\operatorname{mod} A$ the conformal modulus of an open topological annulus $A \subset \mig$. The Branner--Hubbard criterion states that if $K_1 \supset K_2 \supset K_3 \supset \ldots$ is a nested sequence of compact connected subsets of $\mig$, with each $K_{n+1}$ contained in the interior of $K_n$, and if, further, each interior $K^{\circ}_n$ is simply connected (making each difference $A_n = K^{\circ}_n \setminus K_{n+1}$ a topological annulus), then if $\sum_{n=1}^{\infty} \operatorname{mod} A_n = \infty$, the intersection $\cap K_n$ reduces to a single point. The sets $A_n$ in our case, are the difference of the starting set with the corresponding preimage under $f$; that is, $A_1 = T_{s_0,c} \setminus \overline{L_{s_0}  (T_{s_1,c})}$ and $A_n = L_{s_0} \circ \ldots \circ L_{s_{n-2}} (T_{s_{n-1},c}) \setminus \overline{L_{s_0} \circ \ldots \circ L_{s_{n-1}}  (T_{s_{n},c})}$ for $n \geq 2$. But $f$ maps conformally between these trapeziums (since $f$ is entire on $\mig$ and the zeros of $f'$ lie on the lines $\cup_{k=0,\ldots,n-1} V_k$ as shown in Theorem \ref{cpsss}), making the conformal moduli in each step constant and thus proving our claim.
\end{proof}

Finally, we prove that to $z(s)$, for each $s \in \Sigma_K$, there corresponds a \emph{unique} curve that is attached to it and is parametrised by $t \mapsto h_s(t)$.

\begin{theorem}\label{hair1}
Let $s = s_0 s_1 s_2 \ldots \in \Sigma_K$. There is a unique hair attached to $z(s)$ and $t \mapsto h_s(t)$ is a parametrisation of this hair. In particular, this hair lies entirely in $R(s_0)$.
\end{theorem}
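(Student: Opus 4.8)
\noindent\emph{Proof strategy.} The plan is to check that the curve $t\mapsto h_s(t)$, which the three preceding propositions already show to be well defined and continuous on $[1,\infty)$ with $h_s(1)=z(s)$, satisfies the remaining conditions of Definition~\ref{hairdef}, and then to prove uniqueness by a contraction argument. The first thing I would record is the functional equation $f^k\circ h_s=h_{\sigma^k(s)}\circ E^k$ for $k\ge0$, obtained by letting $n\to\infty$ in the identity $f^k\bigl(G^n_s(t)\bigr)=G^{\,n-k}_{\sigma^k(s)}\bigl(E^k(t)\bigr)$ (valid for $k\le n$), which itself follows from $f\circ L_j=\mathrm{id}$ and $E^{n-1}\circ E=E^n$. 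Property~(3) is then immediate: for $t>1$ the increasing sequence $E^n(t)$ is unbounded, since $E(x)>x$ for all $x\ne1$ while $1$ is the only fixed point of $E$, so $\operatorname{Re}f^n(h_s(t))=\operatorname{Re}h_{\sigma^n(s)}(E^n(t))\ge E^n(t)-M\to\infty$ by Proposition~\ref{anisotita}; and property~(4) is the bound $\operatorname{Re}h_s(t)\in[t-M,t+M]$ from that same proposition.

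For property~(2) and the closing assertion, I would argue as follows. Each $G^n_{\sigma^k(s)}(\tau)$ lies in the half-strip $H_{s_k,c}$, hence so does its limit $h_{\sigma^k(s)}(\tau)$ (in the closure), and therefore $\operatorname{Re}f^k(h_s(t))=\operatorname{Re}h_{\sigma^k(s)}(E^k(t))\ge c>0$ for all $k$ and $t$. Since $f^k(h_s(t))$ lies in $\overline{H_{s_k,c}}$, it can fail to lie in the open strip $R(s_k)$ only by lying on one of the horizontal half-lines $\{\operatorname{Im}z=(2s_k\pm1)\pi,\ \operatorname{Re}z\ge c\}$; but by Lemma~\ref{Ym} and the accompanying statements about $f(S^2_{m,c})$ and $f(S^3_{m,c})$, $f$ maps these half-lines into the open left half-plane, contradicting $\operatorname{Re}f\bigl(f^k(h_s(t))\bigr)=\operatorname{Re}h_{\sigma^{k+1}(s)}(E^{k+1}(t))\ge c>0$. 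Therefore $f^k(h_s(t))\in R(s_k)$ for every $k$, so the itinerary of $h_s(t)$ is defined and equals $s$, and in particular the whole hair lies in $R(s_0)$.

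For uniqueness, let $\tilde h$ be any hair attached to $z(s)$. Fixing $t>1$ and writing $w=\tilde h(t)$, the point $w$ has itinerary $s$ and $\operatorname{Re}f^n(w)\to\infty$, so I may choose $N$ with $f^n(w)\in H_{s_n,c}$ and $\operatorname{Re}f^n(w)>q+1$ for all $n\ge N$; then $f^n(w)=L_{s_n}\bigl(f^{n+1}(w)\bigr)$ for $n\ge N$, whence $f^N(w)=L^{\,n-N}_{\sigma^N(s)}\bigl(f^n(w)\bigr)$ for every $n\ge N$, while the functional equation gives $h_{\sigma^N(s)}(\tau)=L^{\,n-N}_{\sigma^N(s)}\bigl(h_{\sigma^n(s)}(E^{\,n-N}(\tau))\bigr)$. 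Taking $\tau=\tau_n$ with $E^{\,n-N}(\tau_n)=\operatorname{Re}f^n(w)$, the points $f^n(w)$ and $h_{\sigma^n(s)}(E^{\,n-N}(\tau_n))$ lie in the common strip $R(s_n)$ (imaginary parts within $2\pi$) and, by Proposition~\ref{anisotita}, have real parts within $M$; since $\bigl|\bigl(L^{\,n-N}_{\sigma^N(s)}\bigr)'\bigr|\le2^{-(n-N)}$ by~\eqref{eq:daveanisotitav2}, this forces $\bigl|f^N(w)-h_{\sigma^N(s)}(\tau_n)\bigr|\le2^{-(n-N)}(M+2\pi)\to0$. The $\tau_n$ remain bounded (Proposition~\ref{anisotita} again), so a subsequence converges to some $\tau^*\ge1$ and $f^N(w)=h_{\sigma^N(s)}(\tau^*)=f^N\bigl(h_s(u^*)\bigr)$ with $u^*=(E^N)^{-1}(\tau^*)$. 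Pulling this back one iterate at a time — at each level $j<N$ the points $f^j(w)$ and $f^j(h_s(u^*))$ are preimages of the same point, both in the strip $R(s_j)$, on which $f$ is injective (by the injectivity of $f$ on each trapezium $T_{m,c}$ from Section~\ref{sec:trapeziums} together with the single-exponential behaviour in $T_0(\nu)$ from Lemma~\ref{davel}) — I obtain $w=h_s(u^*)$, so $\tilde h(t)\in h_s([1,\infty))$. Since $t>1$ is arbitrary and $\tilde h(1)=z(s)=h_s(1)$, this gives $\tilde h([1,\infty))\subseteq h_s([1,\infty))$, and the reverse inclusion follows symmetrically, so the hair is unique.

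The step I expect to be the main obstacle is this uniqueness argument, and specifically the pull-back through the finitely many initial iterates whose orbit may still lie in the bounded trapezium region: controlling these requires the injectivity of $f$ on the trapeziums — and hence the location of the zeros and critical points of $f$ established earlier — whereas once the orbit has entered $T_0(\nu)$ the clean single-exponential estimates suffice; extracting the limiting parameter $\tau^*$ also relies essentially on the two-sided bound of Proposition~\ref{anisotita}.
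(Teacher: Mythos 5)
Your existence half is essentially the paper's own argument: the functional equation $f^n\circ h_s=h_{\sigma^n(s)}\circ E^n$, Proposition \ref{anisotita} for properties (3) and (4) of Definition \ref{hairdef}, and (a little more explicitly than the paper) Lemma \ref{Ym} to keep $f^k(h_s(t))$ off the horizontal boundary lines, so that the itinerary is $s$ and the hair lies in $R(s_0)$. For uniqueness, however, you take a genuinely different route from the paper, which never pulls back along inverse branches: it assumes two hairs attached to $z(s)$, considers a domain $U\subset R(s_0)$ bounded by them (splitting into the cases of bounded and unbounded intersection), shows that the forward images of $U$ remain in $T_0(\nu)$ and hence omit the left half-plane, so $U\subset F(f)$ by Montel's theorem, while Lemma \ref{g} forces the points of $U$ into $J(f)$ --- a contradiction.

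Your contraction/pull-back argument has a genuine gap at exactly the step you flag. You need that $f^j(w)$ and $f^j(h_s(u^*))$, being $f$-preimages of the same point and both lying in $R(s_j)$, must coincide, and you justify this by injectivity of $f$ on the trapeziums together with the behaviour of $f$ in $T_0(\nu)$. But $f$ is not injective on the full bi-infinite strip $R(s_j)$ (to the left of $V_0$ the other exponential summands dominate and a given value has many preimages in the same horizontal strip), and the definition of a hair only places $f^j(\tilde h(t))$ in $R(s_j)$: nothing confines these finitely many initial orbit points to $T_{s_j,c}\cup H_{s_j,c}$ or to $T_0(\nu)$, which are the only regions where the inverse branches $L_{s_j}$ are known to pick out the unique preimage. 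So the conclusion $w=h_s(u^*)$ is not established; ruling out such "stray" configurations is precisely what the paper's normal-families argument accomplishes. Two further, smaller issues: the boundedness of the parameters $\tau_n$ does not follow from Proposition \ref{anisotita}, which bounds $\operatorname{Re}G^n_s(t)$ rather than the orbit of an arbitrary hair point; you would need a separate upper growth estimate (for instance $|f(z)|\le e^{\operatorname{Re}z}(1+Ce^{a\operatorname{Re}z})$ on the half-strips, iterated to give $\operatorname{Re}f^{N+k}(w)\le E^k\bigl(\operatorname{Re}f^N(w)+c\bigr)$). And the closing "reverse inclusion follows symmetrically" is not symmetric: your argument uses the functional equation and two-sided bounds enjoyed by $h_s$ but not known for $\tilde h$, so to pass from $\tilde h([1,\infty))\subseteq h_s([1,\infty))$ to equality of the curves you would additionally need injectivity of $h_s$ and a connectedness argument, neither of which you prove.
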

\begin{proof}
We first verify that $h_s$ is indeed a hair, following Definition \ref{hairdef}. We claim that $h_s(t)$ has itinerary $s$ for $t \geq 1$. Note that, since $f \circ L_{s_0}$ is the identity, we have, for $t \geq 1$, 
\begin{align*}
f \circ h_s(t) = \lim_{n \to \infty}  f \circ G_s^n (t) = \lim_{n \to \infty} G_{\sigma(s)}^{n-1} (E(t)) = h_{\sigma(s)} (E(t)).
\end{align*}
It follows that, for $t \geq 1$,
\begin{equation}
f^n \circ h_s(t) = h_{\sigma^n(s)}( E^n(t)). \label{eq:ela}
\end{equation}
Hence $f^n( h_s(t)) \in R (s_n)$ (which denotes the horizontal $2\pi$-width strip that corresponds to $s_n$) as required. Also, from Proposition \ref{anisotita} and \eqref{eq:ela},
\begin{align*}
E^n(t) - M \leq \operatorname{Re} f^n \circ h_s(t) \leq E^n(t) + M,
\end{align*}
for $n$ sufficiently large, where $M$ is as specified in Proposition \ref{anisotita}. Therefore, $\operatorname{Re} f^n  \circ h_s(t) \to \infty$ as $n \to \infty$ when $t>1$. Finally, since $t-M \leq \operatorname{Re} h_s(t) \leq t+M$ for $t> q$, it follows that $\operatorname{Re} h_s(t) \to \infty$ as $t \to \infty$. This proves that $h_s$ parametrises a hair. We will now show that this hair is unique.

Suppose that $h_s$ is not unique. Then there are at least two hairs attached to $z(s)$; consider two of them. We examine the following two cases.
\begin{itemize}
\item Suppose that the hairs meet in only a bounded set of points. Consider the last point of intersection; suppose that point is $\zeta$. Let $U$ be the unbounded open set consisting of the set of points contained in $R(s_0)$ that is bounded by the two hairs, has $\zeta$ in its boundary and can only access infinity from the right half-plane. We claim that the images of $U$ under $f^n$ are contained within the images of the hairs attached to $f^n(\zeta)$ (so, in $T_0(\nu)$) and therefore, by Montel's theorem, $U$ has to be in the Fatou set of $f$, which contradicts Lemma \ref{g}.

 To prove the claim, we consider a point $z \in U$ and intersect $U$ with the half-plane $\{w \in \mig : \operatorname{Re} w < \lambda \}$, with $\lambda > \operatorname{Re} z$; name the new bounded set $U_{\lambda}$. The crosscuts of $U_{\lambda}$ on the vertical $\{w \in \mig : \operatorname{Re} w = \lambda \}$ map under $\exp$ to arcs of a circle around the origin of radius $e^{ \lambda}$. When $\lambda$ is large enough, $f$ will map the crosscuts to a thin annulus around that circle. The set $f(U_{\lambda})$ can then be one of two bounded sets defined by $f(\partial U_{\lambda})$ inside some circle around the origin. But, since the two hairs have to lie in $R(s_1)$ following the itinerary $s$ and $f(U_{\lambda})$ does not contain the origin, $f (U_{\lambda})$ is a bounded region that is defined between them and has to lie in $R(s_1)$ as well. Its further forward images will lie in their respective strips in $T_0( \nu)$, thus avoiding the left half-plane. Since $\lambda > \operatorname{Re} z$ was arbitrary, the forward images under $f$ of the unbounded region $U$  also have to also lie in $T_0(\nu)$, thus proving our claim.
\item Suppose that the curves meet in an unbounded set of points. Since the hairs are unbounded closed sets that are not identical, there must exist a domain $U$ lying in $R(s_0)$ whose boundary lies entirely in the two hairs. As in the previous case, the images of $U$ under $f^n$ are bounded by the images of the hairs attached to $f^n(\zeta)$ and therefore, by Montel's theorem, $U$ is in the Fatou set of $f$. This contradicts Lemma $\ref{g}$. \qedhere
\end{itemize}
\end{proof}

We can now deduce our main result.
\begin{proof}[Proof of Theorem 1.2]
Let $f \in \mathcal{F}$. Theorem \ref{hair1} gives the existence of a Cantor bouquet in $T_0(\nu) \cup Q_0 \cup Q_1$. Consider an arbitrary hair of the Cantor bouquet, parametrised by $t \mapsto h_s(t)$. From Lemma~\ref{jcor}, $z(s)$ is in $J(f)$. Let $z = h_s(t)$ with $t>1$. Then $f^n(z) \to \infty$ in the angle $T_0(\nu)$, so there exists $N \in \fis$ such that for $n \geq N$, $f^n (z) \in T_0(\nu)$. Therefore, from Lemma \ref{g}, $z \in J(f) \cap A(f)$. 
\end{proof}

Finally, we have the following corollary of Theorem \ref{hair1} and Lemma \ref{curveswd}, which reveals a key part of the structure of the Julia set given by Theorem \ref{hair1}.

\begin{cor}
Let $f \in \mathcal{F}$. For all large enough $k \in \fis$, there exist two simple unbounded curves $\gamma_k$ and $\gamma_{-k}$ in $J(f)$, with their endpoints in $Q_0$ and $Q_1$ respectively, that lie entirely inside the strips $R(k)$ and $R(-k)$ respectively, and tend to infinity through $T_0(\nu)$.
\end{cor}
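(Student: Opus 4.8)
The plan is to realize $\gamma_k$ and $\gamma_{-k}$ as the hairs furnished by Theorem~\ref{hair1}, attached to the distinguished endpoints of Lemma~\ref{curveswd}. Fix $k\geq M$ and take $K$ large enough that the relevant trapeziums sit among $T_{\pm 1},\dots,T_{\pm K}$. By Lemma~\ref{curveswd} there are repeating itineraries $s,s'\in\Sigma_K$ --- the ones produced in its proof --- with $z(s)\in R(k)\cap Q_0$ and $z(s')\in R(-k)\cap Q_1$. Applying Theorem~\ref{hair1} to $s$ and to $s'$ yields unique hairs $h_s\colon[1,\infty)\to R(k)$ and $h_{s'}\colon[1,\infty)\to R(-k)$ with $h_s(1)=z(s)$, $h_{s'}(1)=z(s')$, contained in $R(k)$ and $R(-k)$ respectively, and with $\operatorname{Re} h_s(t)\to\infty$ and $\operatorname{Re} h_{s'}(t)\to\infty$ as $t\to\infty$. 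I would set $\gamma_k:=h_s([1,\infty))$ and $\gamma_{-k}:=h_{s'}([1,\infty))$; these are then unbounded, have endpoints in $Q_0$ and $Q_1$, and lie in the strips $R(k)$ and $R(-k)$, which already gives four of the five claimed properties.

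Next I would show $\gamma_{\pm k}\subset J(f)$. The endpoints $z(s),z(s')$ lie in $J(f)$ by Corollary~\ref{jcor}. For $t>1$, exactly as in the proof of Theorem~1.2, property~(3) of a hair together with~\eqref{eq:ela} gives that the orbit $f^n(h_s(t))=h_{\sigma^n(s)}(E^n(t))$ tends to infinity, and for $n$ large it lies in $T_0(\nu)$: its imaginary part is trapped in the strip $R(s_n)$ with $|s_n|\leq K$, while its real part is $\to\infty$ by Proposition~\ref{anisotita}. Hence Lemma~\ref{g}, applied to a high enough iterate and combined with backward invariance of $J(f)$, gives $h_s(t)\in J(f)\cap A(f)$; so $\gamma_k\subset J(f)$, and likewise $\gamma_{-k}\subset J(f)$. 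Simplicity of the curves would follow from injectivity of $h_s$: if $h_s(t_1)=h_s(t_2)$ with $t_1\neq t_2$, then by~\eqref{eq:ela} $h_{\sigma^n(s)}(E^n(t_1))=h_{\sigma^n(s)}(E^n(t_2))$ for all $n$, while Proposition~\ref{anisotita} forces the real parts of these two points to differ by at least $|E^n(t_1)-E^n(t_2)|-2M\to\infty$, a contradiction; thus $\gamma_k$ and $\gamma_{-k}$ are simple arcs.

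Finally I would verify that each curve tends to infinity through $T_0(\nu)$. Since $\gamma_k\subset R(k)$ has imaginary part confined to the interval $((2k-1)\pi,(2k+1)\pi)$ while $\operatorname{Re} h_s(t)\to\infty$, for all sufficiently large $x$ the point $x+2k\pi i$ has $|\arg(x+2k\pi i)|<\pi/p$; such a point lies outside the bounded polygon $P(\nu)$ and outside each half-strip $Q_j$ (each $Q_j$ hugs the ray at angle $(2j+1)\pi/p$ and so meets a fixed horizontal line only in a bounded set), and therefore lies in $T_0(\nu)$. Hence $h_s(t)\in T_0(\nu)$ for all $t$ beyond some $t_0$, and $h_s(t)\to\infty$; the same argument with $R(-k)$ in place of $R(k)$ disposes of $\gamma_{-k}$.

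Most of the proof is bookkeeping with Lemma~\ref{curveswd}, Theorem~\ref{hair1}, Corollary~\ref{jcor}, Proposition~\ref{anisotita} and Lemma~\ref{g}. The two steps requiring real care are the injectivity argument and, especially, the geometric claim of the last paragraph --- that near infinity $T_0(\nu)$ is, up to asymptotically thin strips removed along its two bounding rays, the sector $\{\,z:|\arg z|<\pi/p\,\}$, so that any horizontal ray eventually enters and remains in it. I expect that geometric verification to be the main obstacle.
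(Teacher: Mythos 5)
Your proposal is correct and follows exactly the route the paper intends: the corollary is stated there without a written-out proof, as an immediate consequence of Lemma \ref{curveswd} (endpoints in $R(\pm k)\cap Q_0$, $Q_1$) together with Theorem \ref{hair1}, Corollary \ref{jcor} and the argument of Theorem 1.2 via Lemma \ref{g}, which is precisely the bookkeeping you carry out. Your extra verifications (injectivity from Proposition \ref{anisotita}, and the observation that a bounded horizontal strip eventually lies in $T_0(\nu)$ because no $Q_j$ points in the positive real direction) are sound and only fill in details the paper leaves implicit.
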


We note that the symmetry properties of the function $f$ allow us to extend the result of this corollary to $T_k (\nu)$, for $k=1, \ldots, p-1$.

\newpage

\end{document}